\RequirePackage{silence}
\documentclass[12pt,reqno]{amsart}
\usepackage{amsfonts}
\usepackage{amssymb, amsmath, latexsym}
\usepackage[T1]{fontenc}
\usepackage{fouriernc}
\usepackage{longtable}
\usepackage{booktabs} 
\usepackage{tabularx} 
\usepackage{lipsum}
\usepackage{amsthm}
\usepackage{array}
\usepackage{xcolor}
\usepackage[colorlinks=true]{hyperref}
\hypersetup{
    linkcolor=blue!70!black,   
    citecolor=red!70!black,    
    urlcolor=blue!70!black,    
    linktoc=page,              
    bookmarksnumbered=true,    
    pdfpagemode=UseOutlines    
}
\usepackage{mleftright}
\usepackage{tikz-cd}
\usepackage[numbers,sort&compress]{natbib}
\usepackage{graphicx,mathtools}
\usepackage[
    top=1.5cm,    
    bottom=1.5cm, 
    left=2cm,     
    right=2cm,    
    includehead,  
    includefoot
]{geometry}
\usepackage{mathtools}
\usepackage{array}
\usepackage{graphicx,mathtools}
\usepackage{ragged2e}
\usepackage{stmaryrd}
\usepackage{etoolbox}
\usepackage{microtype}
\setcitestyle{square}
\SetSymbolFont{stmry}{bold}{U}{stmry}{m}{n}
\allowdisplaybreaks
\numberwithin{equation}{section}
\theoremstyle{plain}
\newtheorem{thm}{Theorem}[section]
\newtheorem{lem}[thm]{Lemma}
\newtheorem{prop}[thm]{Proposition}
\newtheorem{cor}[thm]{Corollary}

\theoremstyle{definition}
\newtheorem{df}[thm]{Definition}
\newtheorem{ex}[thm]{Example}

\newtheorem{conj}[thm]{Conjecture}
\newtheorem{question}[thm]{Question}

\theoremstyle{remark}
\newtheorem{rmk}[thm]{Remark}

\newcommand{\ZZ}{\mathbb{Z}}
\newcommand{\NN}{\mathbb{N}}
\newcommand{\QQ}{\mathbb{Q}}
\newcommand{\RR}{\mathbb{R}}
\newcommand{\CC}{\mathbb{C}}

\newcommand{\FF}{\mathbb{F}}
\newcommand{\GG}{\mathbb{G}}
\newcommand{\KK}{\mathbb{K}}
\newcommand{\PP}{\mathbb{P}}

\newcommand{\nfk}{\mathfrak{n}}

\newcommand{\Dfk}{\mathfrak{D}}

\newcommand{\Acal}{\mathcal{A}}

\newcommand{\Dcal}{\mathcal{D}}

\newcommand{\Mcal}{\mathcal{M}}

\newcommand{\Ocal}{\mathcal{O}}

\newcommand{\Rcal}{\mathcal{R}}

\newcommand{\id}{\operatorname{id}}

\renewcommand{\ker}{\operatorname{ker}}

\newcommand{\dep}{\operatorname{dep}}
\newcommand{\wt}{\operatorname{wt}}

\let\oldforall\forall
\renewcommand{\forall}{\oldforall \: }
\let\oldexist\exists
\renewcommand{\exists}{\oldexist \: }

\let\subset\subseteq

\DeclarePairedDelimiter\floor{\lfloor}{\rfloor}

\newcommand{\sbb}[1]{(\kern-.2em({#1})\kern-.2em)}
\newcommand{\mbb}[1]{[\![{#1}]\!]}
\newcommand{\cpi}{\tilde{\pi}}

\newcommand{\ww}[1]{\mathfrak{#1}} 
\newcommand{\C}{\mathbf{C}}
\renewcommand{\H}{\mathbf{H}}
\newcommand{\BC}{\mathrm{BC}}
\newcommand{\dBC}{\mathrm{dBC}}

\newcommand{\bfC}{\mathbf{C}}

\newcommand{\Ical}{\mathbf{I}}
\newcommand{\Zcal}{\mathfrak{Z}}
\newcommand{\ad}{\mathrm{ad}}
\newcommand{\ext}{\mathrm{ext}}
\title[On {\protect\lowercase{$u$}}-Multiple Zeta Values in Positive Characteristic]
{On {\protect\lowercase{$u$}}-Multiple Zeta Values in Positive Characteristic}
\author[H.-C. Tsui]{Hung-Chun Tsui}
\thanks{The author was supported by the National Science and Technology Council grant no. 113-2628-M-007-004.}
\address{Department of Mathematics, National Tsing Hua University, Hsinchu 300044, Taiwan}
\email{hctsui@gapp.nthu.edu.tw}
\subjclass[2020]{Primary 11M32; Secondary 11R58 11R59}
\date{\today}
\dedicatory{}
\commby{}
\begin{document}
\begin{abstract}
In this paper, we introduce the concepts of the $u$-bracket, finite multiple harmonic $u$-series, and $u$-multiple zeta values via the Carlitz module. These objects serve as function field counterparts to the classical theory of $q$-analogs. We prove that the ``limits'' of finite multiple harmonic $u$-series at Carlitz torsion points yield Thakur's multiple zeta values and finite multiple zeta values over $\mathbb{F}_r(\theta)$ from analytic and algebraic perspectives, respectively. This can be regarded as a positive characteristic analog of the results by Bachmann, Takeyama, and Tasaka \cite{BTT18}. Furthermore, we investigate the properties of $u$-multiple zeta values and their expansions, obtaining a family of explicit relations among Thakur's multiple zeta values at both positive and non-positive indices.
\end{abstract}
\maketitle
\tableofcontents
\section{Introduction}
\subsection{Classical \texorpdfstring{$q$}{q}-Analog Theory}

Let $\NN$ denote the set of positive integers and $\ZZ$ denote the set of integers. We begin by introducing the classical $q$-analog theory. In various mathematical frameworks, $q$-analogs serve as deformations of classical formulas or theorems, such that the original theory is retrieved in the limit as $q \to 1$. The concept of $q$-analogs has important applications in various mathematical fields, especially in number theory, partition theory, and special functions (see \cite{GR2004, andrews1976theory} for more details).

As basic examples, we define the $q$-analogs of integers and factorials. For any non-negative integer $n$, the \textit{$q$-bracket} is defined as
$$[n]_q := \frac{1-q^n}{1-q},$$
which satisfies $\lim_{q \to 1} [n]_q = n$. Based on this, the \textit{$q$-factorial} is defined as
$$[n]_q! = [1]_q [2]_q \cdots [n]_q$$
for $n \geq 1$, with $[0]_q! = 1$. The $q$-factorial has many properties similar to the classical factorial. For example, the $q$-factorial admits a factorization into cyclotomic polynomials $\Phi_k(q)$ given by$$[n]_q! = \prod_{\substack{k\in\NN\\k\neq 1}} \Phi_k(q)^{\lfloor n/k \rfloor}.$$
As $q \to 1$, this identity recovers Legendre's formula
$$
n!=\prod_{p\text{ prime}}p^{\sum_{e\geq 1}\floor{n/p^e}},
$$
since $[n]_q! \to n!$ and the value $\Phi_k(1)$ is $p$ if $k=p^e$ and $1$ otherwise.
\subsection{Classical Theory of Multiple Zeta Values} We now turn our attention to the theory of multiple zeta values. We denote by $$\Ical=\bigcup_{m=1}^\infty \NN^m\cup\{\varnothing\}$$  the set of indices and by $$\Ical^{\ad}=\{(s_1,\ldots,s_m)\in\Ical:s_1\geq 2\}\cup\{\varnothing\}$$ the set of \textit{admissible} indices. For a non-empty index $\ww{s}=(s_1,\ldots,s_m)\in\Ical$, we define its \textit{depth} and \textit{weight} of $\ww{s}$ by
\[
\dep(\ww{s})=m\quad \text{and} \quad \wt(\ww{s})=s_1+\cdots+s_m,
\]
respectively.
For the empty index $\varnothing$, we put $\dep(\varnothing)=0=\wt(\varnothing)$. Furthermore, throughout this paper, an empty sum is defined to be zero and an empty product to be one.

We first introduce the theory of \textit{finite multiple harmonic $q$-series}, following the framework in \cite{BTT18}. The classical multiple zeta value is defined by
$$\zeta(\ww{s}) = \sum_{n_1 > \cdots > n_r \geq 1} \frac{1}{n_1^{s_1} \cdots n_m^{s_m}} \in \mathbb{R}, \quad \varnothing\neq \ww{s}=(s_1,\ldots,s_m) \in \Ical^{\text{ad}}.$$
We put $\zeta(\varnothing)=1$ and denote the $\mathbb{Q}$-algebra generated by all multiple zeta values by $\Zcal = \langle \zeta(\ww{s}) : \ww{s} \in \Ical^{\text{ad}} \rangle_{\mathbb{Q}}$. 
Besides this, we also consider two other variants called the \textit{finite multiple zeta values} and \textit{symmetric multiple zeta values} in the following.

Consider the ring 
$$\mathcal{A} = \left(\prod_p \mathbb{F}_p\right) / \left(\bigoplus_p \mathbb{F}_p\right),$$
where the product and direct sum are taken over all primes $p$. This ring was introduced by Kontsevich \cite{Kontsevich2009}, and it admits a natural embedding $\mathbb{Q} \hookrightarrow \mathcal{A}$ via the diagonal map. The finite multiple zeta value, first introduced by Kaneko and Zagier \cite{KanekoZagier}, is defined by
$$\zeta_{\Acal}(\ww{s})=\left(\sum_{p>n_1>\cdots>n_m\geq 1}\frac{1}{n_1^{s_1}\cdots n_m^{s_m}}\mod{p}\right)_p\in\Acal,\quad \varnothing\neq \ww{s}=(s_1,\ldots,s_m)\in\Ical.$$
We put $\zeta_{\Acal}(\varnothing)=(1)_p$ and denote the $\QQ$-algebra generated by all finite multiple zeta values by $\Zcal_\Acal=\langle \zeta_{\Acal}(\ww{s}):\ww{s}\in\Ical\rangle_\QQ$. 

On the other hand, following the framework of Ihara, Kaneko and Zagier \cite[Proposition 1]{IKZ06}, we have the notion of \textit{stuffle-regularized multiple zeta values} $$Z^\ast(\ww{s};T) \in \mathbb{R}[T],\quad \ww{s}\in\Ical$$ satisfying
\[
Z^\ast(\ww{s};T) = \zeta(\ww{s})\quad \text{for } \ww{s}\in\Ical^{\mathrm{ad}}, \quad\text{and}\quad Z^\ast(1;T) = T
\]
where $T$ denotes a formal variable.
Then the symmetric multiple zeta value, also introduced by Kaneko and Zagier \cite[(87), (88)]{KanekoZagier}, is defined by 
$$\zeta_{S}(\ww{s})=\sum_{j=0}^m(-1)^{s_1+\cdots+s_j}Z^\ast(s_j,\ldots,s_1;T)Z^\ast(s_{j+1},\ldots,s_m;T)\pmod {\zeta(2)}$$
where $\varnothing\neq \ww{s}=(s_1,\ldots,s_m)\in\Ical$. We put $\zeta_S(\varnothing)=1\pmod{\zeta(2)}$. Here, we mention that by \cite[Theorem 3]{KanekoZagier} (see also \cite{Kaneko2019}), the symmetric multiple zeta value does not depend on
$T$ and lies in $\Zcal$.

We recall the famous Kaneko--Zagier conjecture.
\begin{conj}[Kaneko--Zagier Conjecture]
    There is a $\QQ$-algebra isomorphism $$\varphi_{\textrm{KZ}}:\Zcal_{\Acal}\to\Zcal/\zeta(2)\Zcal$$ such that $\varphi_{\textrm{KZ}}(\zeta_\Acal(\ww{s}))=\zeta_S(\ww{s})$
    for all $\ww{s}\in\Ical$.
\end{conj}

In \cite{BTT18}, Bachmann, Takeyama, and Tasaka provide evidence for the Kaneko--Zagier conjecture by considering the finite multiple harmonic $q$-series
\[
Z_{<n}(\ww{s};q) = \sum_{n > n_1 > \cdots > n_m > 0} 
\frac{q^{(s_1-1)n_1 + \cdots + (s_m-1)n_m}}{[n_1]_q^{s_1} \cdots [n_m]_q^{s_m}},
\]
where $\varnothing \neq \ww{s}=(s_1,\ldots,s_m) \in \Ical$. Here, $q \in \CC$ is assumed to satisfy $q^{k} \neq 1$ for all $0 < k < n$. We also set $Z_{<n}(\ww{s}; q) = 0$ if $\dep(\ww{s}) \geq n$ and $Z_{<n}(\varnothing; q) = 1$. This model of $q$-series is often called the Bradley--Zhao model (see \cite{Bradley2005, Zhao2007}). The authors in \cite{BTT18} showed that the ``limits'' of $Z_{<n}(\ww{s};q)$ evaluated at the $n$-th root of unity $\zeta_n:=\exp(2\pi\sqrt{-1}/n)$ relate to both finite and symmetric multiple zeta values. Precisely, they proved the following:
\begin{thm}[{\cite[Theorem 1.1]{BTT18}}]\label{thm:1.1}
    For any index $\ww{s}\in\Ical$, we have
    $$\zeta_\Acal(\ww{s})=\left(Z_{<p}(\ww{s};\zeta_p)\mod (1-\zeta_p)\right)_p\in\Acal.$$
\end{thm}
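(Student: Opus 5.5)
The plan is to work prime by prime inside the ring $\ZZ[\zeta_p]$ and use the fact that $(1-\zeta_p)$ is the unique prime of $\ZZ[\zeta_p]$ above $p$, with residue field $\ZZ[\zeta_p]/(1-\zeta_p)\cong \FF_p$. Under this identification, $\zeta_p\equiv 1$. The statement then reduces to the following claim: for each prime $p$ (or at least all sufficiently large $p$, since $\Acal$ ignores finitely many primes) and each $\ww{s}=(s_1,\ldots,s_m)$, the element $Z_{<p}(\ww{s};\zeta_p)$ lies in $\ZZ[\zeta_p]$, and its reduction modulo $(1-\zeta_p)$ equals
\[
\sum_{p>n_1>\cdots>n_m\geq 1}\frac{1}{n_1^{s_1}\cdots n_m^{s_m}} \bmod p .
\]
The cases $\ww{s}=\varnothing$ and $\dep(\ww{s})\geq p$ are trivial: both sides are $1$, respectively both are empty sums equal to $0$.

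\textbf{Integrality of each term.} Fix $0<n<p$. Then
\[
[n]_{\zeta_p}=\frac{1-\zeta_p^n}{1-\zeta_p}=1+\zeta_p+\cdots+\zeta_p^{n-1}.
\]
This is a cyclotomic unit in $\ZZ[\zeta_p]$, since $1-\zeta_p^n$ and $1-\zeta_p$ are associates when $p\nmid n$. Hence each denominator $[n_1]_{\zeta_p}^{s_1}\cdots[n_m]_{\zeta_p}^{s_m}$ is a unit, and each summand of $Z_{<p}(\ww{s};\zeta_p)$ lies in $\ZZ[\zeta_p]$. So the whole finite sum lies in $\ZZ[\zeta_p]$ and can be reduced modulo $(1-\zeta_p)$.

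\textbf{Termwise reduction.} Modulo $(1-\zeta_p)$ we have $\zeta_p\equiv 1$. Therefore $[n]_{\zeta_p}\equiv n$, which is nonzero in $\FF_p$ because $0<n<p$. Likewise the numerator $\zeta_p^{(s_1-1)n_1+\cdots+(s_m-1)n_m}\equiv 1$. Since reduction is a ring homomorphism and the denominators are units, the inverse of each unit $[n_i]_{\zeta_p}^{s_i}$ reduces to $n_i^{-s_i}\in\FF_p$. Summing over the same finite range $p>n_1>\cdots>n_m>0$ then gives exactly the $p$-component of $\zeta_\Acal(\ww{s})$, with $\FF_p$ identified with $\ZZ[\zeta_p]/(1-\zeta_p)$ through $\ZZ\to\ZZ[\zeta_p]$. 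Collecting these identities over all primes $p$ yields equality in $\Acal$.

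\textbf{Main point to check.} The only step requiring care is the unit claim for $[n]_{\zeta_p}$. It follows from the standard fact that $(1-\zeta_p^n)/(1-\zeta_p)$ and its inverse $(1-\zeta_p)/(1-\zeta_p^n)$ both lie in $\ZZ[\zeta_p]$. For the inverse, write $\zeta_p=(\zeta_p^n)^k$ with $nk\equiv 1 \pmod p$, so it equals $1+\zeta_p^n+\cdots+\zeta_p^{n(k-1)}$. Alternatively, one can avoid units and simply note that the reduction of $[n]_{\zeta_p}$ is $n\neq 0$ in $\FF_p$, so $[n]_{\zeta_p}$ lies outside the maximal ideal $(1-\zeta_p)$. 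It is then invertible in the localization at that prime, which suffices for the reduction map.
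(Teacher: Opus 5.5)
Your proof is correct. It is essentially the standard argument: $[n]_{\zeta_p}$ is a cyclotomic unit for $0<n<p$, $(1-\zeta_p)$ is totally ramified over $p$ with residue field $\FF_p$, and $[n]_{\zeta_p}\equiv n$ modulo $(1-\zeta_p)$. The paper only cites this statement from BTT18, but its proof of the function-field analogue (Theorem~\ref{thm:u to finite MZV}) follows the same route: $[a]_{\lambda_v}$ is a unit of $A[\lambda_v]$, $v$ is totally ramified with $A[\lambda_v]/(\lambda_v)\cong\FF_v$, and $[a]_{\lambda_v}\equiv a \pmod{\lambda_v}$.
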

\begin{thm}[{\cite[Theorem 1.2]{BTT18}}]\label{thm:1.2}
    For any index $\ww{s}\in\Ical$, we have
    $$\zeta_S(\ww{s})\equiv\Re\left(\lim_{n\to\infty}Z_{<n}(\ww{s};\zeta_n)\right)\pmod {\zeta(2)\Zcal}$$
    where $\Re(\cdot)$ denotes the real part of a complex number.
\end{thm}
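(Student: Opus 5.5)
We would split the range of summation at the midpoint $n/2$. The part near $0$ is compared with truncated multiple zeta values. The part near $n$ is reflected via $k\mapsto n-k$ and becomes a reversed, sign-twisted sum of the same type. The divergences coming from the middle region are then controlled through the stuffle regularization of Ihara--Kaneko--Zagier, so that the formula defining $\zeta_S$ emerges.

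\textbf{Exact reflection.} Put $q=\zeta_n$, so $q^n=1$. Then
\[
[n-m]_q=\frac{1-q^{-m}}{1-q}=-q^{-m}[m]_q,
\qquad
\frac{q^{(s-1)(n-m)}}{[n-m]_q^{s}}=(-1)^s\frac{q^{m}}{[m]_q^{s}} .
\]
For an index $\ww{s}=(s_1,\ldots,s_m)$, decompose the range $n>n_1>\cdots>n_m>0$ according to the number $j$ of the $n_i$ with $n_i>n/2$. Writing $n_i=n-m_i$ for $i\le j$, we obtain an exact identity
\[
Z_{<n}(\ww{s};\zeta_n)=\sum_{j=0}^{m}(-1)^{s_1+\cdots+s_j}\,
\widetilde{Z}_{\le n/2}(s_j,\ldots,s_1;\zeta_n)\,Z_{\le n/2}(s_{j+1},\ldots,s_m;\zeta_n),
\]
up to harmless boundary terms at $n/2$. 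Here $\widetilde Z$ is the same kind of truncated sum, but with numerators $q^{m_i}$ in place of $q^{(s_i-1)m_i}$. This mirrors exactly the shape of the defining sum for $\zeta_S$.

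\textbf{Asymptotics.} On the range $k\le n/2$ we use $[k]_{\zeta_n}=\frac{\sin(\pi k/n)}{\sin(\pi/n)}e^{\pi i(k-1)/n}$. For $s_i\ge 2$ in the leading slot, dominated convergence gives $Z_{\le n/2}(\ww{t};\zeta_n)\to\zeta(\ww{t})$ for admissible $\ww{t}$: the summands converge termwise, and $|[k]_q|\ge \tfrac{2}{\pi}k$ supplies a summable majorant. For non-admissible indices we use the $q$-stuffle product, under which $\ww{t}\mapsto Z_{\le N}(\ww{t};q)$ is an algebra homomorphism from a harmonic-type algebra. This lets us write both $Z_{\le n/2}(\ww{t};\zeta_n)$ and $\widetilde Z_{\le n/2}(\ww{t};\zeta_n)$ as polynomials in the depth-one quantities $Z_{\le n/2}(1;\zeta_n)$ and $\widetilde Z_{\le n/2}(1;\zeta_n)$, with coefficients converging to the stuffle-regularized values $Z^\ast(\cdot\,;T)$, modulo $o(1)$. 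A direct computation with $\frac{1}{1-e^{i\theta}}=\frac12+\frac i2\cot\frac\theta2$ then gives
\[
Z_{\le n/2}(1;\zeta_n)=T_n+c+o(1),
\qquad
\widetilde Z_{\le n/2}(1;\zeta_n)=T_n+\bar c+o(1),
\]
where $T_n=\log n+\text{const}$ and $c$ is a constant involving $\pi i$. The defining expression of $\zeta_S$ is independent of $T$ by \cite[Theorem 3]{KanekoZagier}, so $T_n$ drops out of the limit and the limit exists. What remains is the Kaneko--Zagier sum evaluated at a shifted value of $T$, together with corrections that are polynomial in $\pi i$ with coefficients in $\Zcal$.

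\textbf{Main obstacle.} The hard step is controlling these corrections. We must show that, after taking real parts, every contribution not equal to $\zeta_S(\ww{s})$ lies in $\zeta(2)\Zcal=\pi^2\Zcal$. Taking $\Re$ removes the odd powers of $\pi i$, and the even powers are multiples of $\pi^2$. The delicate point is justifying that the coefficients really lie in $\Zcal$ and that the $q^{m}$-versus-$q^{(s-1)m}$ twists in $\widetilde Z$ contribute only factors $e^{\pi i\cdot(\text{rational})}$-type corrections of the same form. To handle this we would first try to prove a generating-function identity for the comparison map $\widetilde Z\to Z$ of the form $\exp\bigl(\sum_k a_k(\pi i)^k\,\cdot\bigr)$ acting on the harmonic algebra, and then read off its real part.
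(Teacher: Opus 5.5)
This statement is only quoted from \cite{BTT18} as background; the paper gives no proof of it, so your outline has to stand on its own. The first half is sound. The reflection $[n-m]_q=-q^{-m}[m]_q$, the exact factorization by how many $n_i$ exceed $n/2$, and the depth-one asymptotics are all correct; with $T_n=\log(n/\pi)+\gamma$ ($\gamma$ Euler's constant) one gets $c=-\pi i/2$.

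The gap is at the end. What your argument actually produces is $Z_{<n}(\ww{s};\zeta_n)=F(T_n+\bar c,\,T_n+c)+o(1)$, where
\[
F(T_1,T_2)=\sum_{j=0}^{m}(-1)^{s_1+\cdots+s_j}Z^\ast(s_j,\ldots,s_1;T_1)\,Z^\ast(s_{j+1},\ldots,s_m;T_2).
\]
However, \cite[Theorem 3]{KanekoZagier} only says that the diagonal $F(T,T)$ is independent of $T$. Since $c\neq\bar c$, it does not give that $T_n$ cancels or that the limit exists. The ``corrections'' you mention a priori have coefficients in $\Zcal[T_n]$, not in $\Zcal$, and your ``main obstacle'' is left as an unexecuted plan.

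The missing ingredient is the translation formula
\[
Z^\ast(s_1,\ldots,s_m;T+a)=\sum_{k\ge 0,\ s_1=\cdots=s_k=1}\frac{a^k}{k!}\,Z^\ast(s_{k+1},\ldots,s_m;T).
\]
It holds because both sides are stuffle homomorphisms in the index that agree on admissible indices and on $(1)$. Indeed, the right side is $\sum_k\chi_a(s_1,\ldots,s_k)Z^\ast(s_{k+1},\ldots,s_m;T)$, where $\chi_a$ is the stuffle character sending the all-ones index of depth $k$ to $a^k/k!$ and every other non-empty index to $0$; a deconcatenation convolution of stuffle characters is again a stuffle character. Substituting this into $F$ and merging the two runs of $1$'s that meet at the cut gives
\[
F(T+\alpha,T+\beta)=\sum_{L\ge 0}\frac{(\beta-\alpha)^L}{L!}\,G_L(T).
\]
The sign $(-1)^k$ needed here comes from the $k$ ones moved across the cut. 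Here $G_L(T)$ is the sum, over those $j$ with $s_{j+1}=\cdots=s_{j+L}=1$, of $(-1)^{s_1+\cdots+s_j}Z^\ast(s_j,\ldots,s_1;T)\,Z^\ast(s_{j+L+1},\ldots,s_m;T)$.

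Replacing $(T,\alpha,\beta)$ by $(T+t,\alpha-t,\beta-t)$ changes neither the left-hand side nor $\beta-\alpha$. Comparing powers of $\beta-\alpha$ then shows that every $G_L$ is independent of $T$, hence a constant in $\Zcal$, and $G_0=\zeta_S(\ww{s})$; the case $L=0$ recovers the Kaneko--Zagier $T$-independence. The limit is therefore $\sum_{L}\frac{(-\pi i)^L}{L!}G_L$. Its real part keeps only even $L$, and $\pi^{2j}=6^j\zeta(2)^j$, which is exactly the congruence.

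Two smaller points. First, $\widetilde Z$ is not a homomorphism for the same $q$-stuffle as $Z$, so ``both'' needs an argument. Your fear of extra $e^{\pi i\cdot(\text{rational})}$ twists is nevertheless unfounded. For $|q|=1$ one has $q^{m}/[m]_q^{s}=q^{s}\,\overline{q^{(s-1)m}/[m]_q^{s}}$, hence $\widetilde Z(\ww{t};\zeta_n)=\zeta_n^{\wt(\ww{t})}\,\overline{Z(\ww{t};\zeta_n)}$. Alternatively, $\widetilde Z$ satisfies the stuffle deformed through $q^{2k}=q^k-(1-q)q^k[k]_q$. Either way, the twist survives in the limit only as the conjugate constant $\bar c$.

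Second, dominated convergence is not enough for the error terms. The regularized coefficients are multiplied by powers of $Z_{\le n/2}(1;\zeta_n)\sim\log n$, so you need a rate such as $Z_{\le n/2}(\ww{t};\zeta_n)-\zeta(\ww{t})=O\bigl(n^{-1}(\log n)^{C}\bigr)$ for admissible $\ww{t}$, and a similar bound for the $(1-q)$-deformation terms. This follows from the uniform estimate $[k]_{\zeta_n}=k\,(1+O(k/n))$ for $k\le n/2$ together with $|[k]_{\zeta_n}|\ge 2k/\pi$.
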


Since $\zeta_\Acal(\ww{s})$ and $\zeta_S(\ww{s})$ arise from "limits" of the same object, it is expected that they satisfy the same algebraic relations, providing evidence for the Kaneko--Zagier conjecture. Furthermore, Bachmann, Takeyama and Tasaka used star-version of Theorems \ref{thm:1.1} and \ref{thm:1.2} to re-establish duality formulas for both \textit{finite multiple zeta star values} and \textit{symmetric multiple zeta star values} (see \cite[Theorem 1.3]{BTT18}). We also mention that similar phenomena for variants of finite and symmetric multiple zeta values have also been investigated in \cite{BTT21} and \cite{Tasaka21}.

On the other hand, for any index $\ww{s}\in\Ical^{\ad}$, the limit of $Z_{<n}(\ww{s};q)$ as $n\to\infty$, denoted by $\zeta_q(\ww{s})$, is called the \textit{$q$-multiple zeta value}. Note that this limit is well-defined for $|q|<1$, and as $q\to 1^-$, we have $\zeta_q(\ww{s})\to \zeta(\ww{s})$ for any index $\ww{s}\in\Ical^{\ad}$. This model of $q$-multiple zeta values inherits many relations from classical multiple zeta values (see \cite{Bradley2005, OkudaTakeyama2007, Zhao2007}). In particular, it admits a modified stuffle relation arising from the identity
$$\frac{q^{(s-1)n}}{[n]_q^s}\frac{q^{(r-1)n}}{[n]_q^r}=(1-q)\frac{q^{(s+r-2)n}}{[n]_q^{s+r-1}}+\frac{q^{(s+r-1)n}}{[n]_q^{s+r}},\quad n,s,r\in\NN$$
(see \cite[(2.2)]{Bradley2005}). We also remark that there are various other models of $q$-analogs of multiple zeta values in the literature (see, e.g., \cite{Schlesinger2001, Zudilin2003, Okounkov2014, Bachmann2020}).
\subsection{Preliminaries on Function Field Arithmetic}
We now consider the positive characteristic setting: \vspace{0.2cm}\\
\noindent
\begin{tabularx}{\textwidth}{@{} l X @{}}
    \toprule
    \textbf{Symbol} & \textbf{Description} \\
    \midrule
    $\mathbb{F}_r$ & finite field with $r$ elements, where $r$ is a power of a prime $p$ \\
    $A = \mathbb{F}_r[\theta]$ & the ring of polynomials in the variable $\theta$ over $\mathbb{F}_r$ \\
    $K = \mathbb{F}_r(\theta)$ & the field of fractions  of $A$\\
    $K_\infty=\mathbb{F}_r((1/\theta))$ & the completion of $K$ at the infinite place with uniformizer $1/\theta$ \\
    $\mathbb{C}_\infty$ & the completion of a fixed algebraic closure of $K_\infty$ \\
    $| \cdot |_{\infty}$ & the $\infty$-adic absolute value on $\CC_\infty$ normalized such that $|\theta|_\infty=r$.  \\
    $A_+$ & the set of monic polynomials in $A$\\
    $A_{d}$& the set of polynomials in $A$ with $\deg=d$\\
    $A_{<d}$& the set of polynomials in $A$ with $\deg<d$\\
    $A_{\leq d}$& the set of polynomials in $A$ with $\deg\leq d$\\
    $A_{+,d}=A_+\cap A_d$ &the set of monic polynomials in $A$ with $\deg=d$\\
    $A_{+,<d}=A_+\cap A_{<d}$ &the set of monic polynomials in $A$ with $\deg<d$\\
    $A_{+,\leq d}=A_+\cap A_{\leq d}$ &the set of monic polynomials in $A$ with $\deg\leq d$\\
    \bottomrule
\end{tabularx}
\vspace{0.2cm}

Note that in the function field setting, the objects $A$, $K$, $K_\infty$, and $\mathbb{C}_\infty$ play the roles of $\mathbb{Z}$, $\mathbb{Q}$, $\mathbb{R}$, and $\mathbb{C}$ respectively. Furthermore, $A_+$ is viewed as the function field analog of the set of positive integers $\NN$.

We first recall the well-known Carlitz theory (see \cite{Goss1996} and \cite{Thakur2004} for details). For any commutative ring $R$ with unity, let $R$\textbf{-alg} and $R$\textbf{-mod} denote the category of commutative associative $R$-algebras and the category of $R$-modules, respectively. Then the \textit{Carlitz module} is defined as the functor
\[ \C: A\textbf{-alg} \to A\textbf{-mod} \]
which sends each $A$-algebra $B$ to the $A$-module $\C(B)=B$ where the $A$-module structure is given by the unique $\FF_r$-linear homomorphism 
\[ \C: A \to \operatorname{End}_{\mathbb{F}_r}(\GG_a(B)), \quad a\mapsto \C_a(\tau):=\sum_{i=0}^{\deg a}[a,i]\tau^i, \]
such that $\C_\theta(\tau)=\theta\id+\tau$. Here, $\operatorname{End}_{\FF_r}(\GG_a(B))$ denotes the ring of all $\FF_r$-linear endomorphisms of the additive group $\GG_a$ over $B$, and $\tau:u\mapsto u^r$ denotes the $r$-th power Frobenius endomorphism on $B$. Furthermore, for any $a\in A$, we let 
\[ \C_a(X)=\sum_{i=0}^{\deg a}[a,i]X^{r^i}\in A[X] \]
be the associated $\FF_r$-linear polynomial.

The Carlitz module $\C$ serves as the function field counterpart of the multiplicative group 
\[ \GG_m:\ZZ\textbf{-alg} \to \ZZ\textbf{-mod}. \]
They share many similar properties. For example, we have the \textit{Carlitz exponential} defined by
\[ \exp_\C(X)=\sum_{i=0}^\infty \frac{X^{r^i}}{D_i},\quad D_i=\prod_{j=0}^{i-1}(\theta^{r^i}-\theta^{r^j}), \]
which satisfies the function equation  
$$\C_a(\exp_\C(X))=\exp_\C(aX),\quad a\in A,$$
and plays the role of the classical exponential $\exp(z)$. This induces the following short exact sequence of $A$-modules:
\[
0 \longrightarrow \tilde{\pi} A \longrightarrow \operatorname{Lie}(\C)(\CC_\infty)=\CC_\infty \xrightarrow{\exp_{\C}} \C(\CC_\infty)=\CC_\infty \longrightarrow 0,
\]
which is analogous to the short exact sequence of $\ZZ$-modules:
\[
0 \longrightarrow 2\pi \sqrt{-1} \mathbb{Z} \longrightarrow \operatorname{Lie}(\GG_m)(\CC)=\mathbb{C} \xrightarrow{\exp} \GG_m(\CC)=\mathbb{C}^\times \longrightarrow 1.
\]
Here, $\cpi$ is the \textit{Carlitz period} given by
\[ {\cpi}=(-\theta)^{r/(r-1)}\prod_{i=1}^\infty \left(1-\theta^{1-r^i}\right)^{-1}\in\CC_\infty, \]
where a choice of an $(r-1)$-st root of $-\theta$ is fixed (thus, $\cpi$ is well-defined up to a factor in $\FF_r^\times$). We remark that $\cpi$ is transcendental (see \cite{Wade1941}) and naturally serves as the function field counterpart of $2\pi \sqrt{-1}$. 

On the other hand, for $\nfk\in A$, the $A$-module of \textit{Carlitz $\nfk$-torsion points}, defined by
\[ \Lambda_\nfk:=\{u\in\CC_\infty:\C_\nfk(u)=0\}, \]
is a cyclic $A$-module generated by the element $\lambda_\nfk:=\exp_\C(\cpi/\nfk)$. This torsion submodule $\Lambda_\nfk$ of $\C(\CC_\infty)$ is completely analogous to the classical group of $n$-th roots of unity in $\GG_m(\CC)=\mathbb{C}^\times$.

Next, we introduce the theory of multiple zeta values in this setting. We denote by 
$$\Ical^{\ext}:=\bigcup_{m=1}^\infty \ZZ^m\cup\{\varnothing\}$$
the set of extended indices. An index $\ww{s}\in\Ical^{\ext}$ is called \textit{positive} if $\ww{s}\in\Ical$. Otherwise, it is called \textit{non-positive}. Then \textit{Thakur's power sum} is defined by
$$S_d(s)=\sum_{a\in A_{+,d}}\frac{1}{a^{s}},\quad d,s\in\ZZ.$$
 By the empty sum convention, we note that $S_{<d}(s)=0$ for $d<0$. Now, for any non-empty index $\ww{s}=(s_1,\ldots,s_m)\in\Ical^{\ext}$, we define the \textit{Thakur's multiple power sum}
$$S_d(\ww{s})=\sum_{d=d_1>\cdots>d_m\geq 0}S_{d_1}(s_1)\cdots S_{d_m}(s_m),\quad d\in\ZZ,$$
and
$$S_{<d}(\ww{s})=\sum_{d>d_1>\cdots>d_m\geq 0}S_{d_1}(s_1)\cdots S_{d_m}(s_m),\quad d\in\ZZ.$$
We conventionally set $S_d(\varnothing)=\delta_{d,0}$ and $S_{<d}(\varnothing) = \sum_{d>d'\geq 0}S_{d'}(\varnothing)$ for all $d\in\ZZ$ where $\delta_{i,j}$ denotes the Kronecker delta. 
With these notions, \textit{Thakur's multiple zeta value} is defined by
$$\zeta_A(\ww{s})=\lim_{d\to\infty}S_{<d}(\ww{s})\in K_\infty$$
for any index $\ww{s}\in\Ical^{\ext}$. We mention that for any non-empty positive $\ww{s}\in\Ical$, we have
$$\zeta_A(\ww{s})=\sum_{\substack{a_1,\ldots,a_r\in A_+\\\deg a_1>\cdots>\deg a_r\geq 0}}\frac{1}{a_1^{s_1}\cdots a_r^{s_r}}\in K_\infty.$$
On the other hand, for any non-positive $\ww{s}\in\Ical^{\ext}$, note that the power sum $S_d(s)$ at non-positive integer $s\leq 0$ vanishes for sufficiently large $d$ (see \cite[Theorem 5.1.2]{Thakur2004}). Thus, $S_d(s)\to 0$ as $d\to\infty$ for any $s\in\ZZ$, which guarantees the convergence of $\zeta_A(\ww{s})$. We denote by $\Zcal_\infty=\langle \zeta_A(\ww{s}):\ww{s}\in\Ical\rangle_K=\langle \zeta_A(\ww{s}):\ww{s}\in\Ical^{\ext}\rangle_K$ the $K$-algebra generated by all Thakur's multiple zeta values.

In 2010, Thakur \cite{Thakur2010} proved that the product of two of Thakur's multiple zeta values at positive indices can be expressed as an $\FF_p$-linear combination of values of the same weight, known as the \textit{$r$-shuffle relations}. The first concrete example was due to Chen \cite{Chen2015}, who proved the following explicit formula:
For any $r_1,s_1 \in \NN$,
\[
\zeta_A(r_1)\zeta_A(s_1)=\zeta_A(r_1,s_1)+\zeta_A(s_1,r_1)+\zeta_A(r_1+s_1)+\sum_{i+j=r_1+s_1}\Delta^{i,j}_{r_1,s_1}\zeta_A(i,j),
\]
where 
\begin{equation}\label{eq:df_Delta}
\Delta_{r_1,s_1}^{i,j}=\begin{cases} (-1)^{r_1-1}\binom{j-1}{r_1-1}+(-1)^{s_1-1}\binom{j-1}{s_1-1}, & \text{if } (r-1)\mid j \text{ and }j>0,\\ 0, & \text{otherwise},
    \end{cases}
\end{equation} 
and $\binom{m}{n}$ denotes the usual binomial coefficient modulo $p$. Based on Chen's formula, Yamamoto observed a (conjectural) inductive formula for the $r$-shuffle relations \eqref{eq:inductive-r-shuffle}. The following $r$-shuffle algebra $(\Rcal,\ast)$ was formulated in \cite{Shi2018}:

\begin{df}\label{df-zeta-values-as-words}
    Let $\Mcal$ be the free monoid generated by the set $\{x_k : k \in \NN\}$ and let $\Rcal$ be the $\FF_p$-vector space generated by $\Mcal$. For any non-empty index $\ww{s}=(s_1,\ldots,s_m)\in\Ical$ and the empty index $\varnothing$, we define
    $\ww{s}^-=(s_2,\ldots,s_m)$ and $\varnothing^-=\varnothing$. Moreover, we denote the corresponding words by
    $x_{\ww{s}}=x_{s_1}\cdots x_{s_m}$ and $x_\varnothing=1.$
    We define the \textit{$r$-shuffle product} $\ast$ on $\Rcal$ inductively on the sum of depths as follows:
    \begin{enumerate}
        \item For any index $\ww{s}\in\Ical$, define
        \[
        1\ast x_{\ww{s}}=x_{\ww{s}}\ast 1=x_\ww{s}.
        \]
        \item For non-empty indices $\ww{r}=(r_1,\ldots,r_m),\ww{s}=(s_1,\ldots,s_n)\in\Ical$, define
        \begin{align}\label{eq:inductive-r-shuffle}
             x_{\ww{r}} \ast x_{\ww{s}}
            = x_{r_1} ( x_{\ww{r}^-} \ast x_{\ww{s}} ) 
            + x_{s_1} ( x_{\ww{r}} \ast x_{\ww{s}^-} ) 
            + x_{r_1 + s_1} ( x_{\ww{r}^-} \ast x_{\ww{s}^-} )+ \sum_{i+j = r_1+s_1} 
            \Delta^{i,j}_{r_1,s_1} x_i ( ( x_{\ww{r}^-} \ast x_{\ww{s}^-} ) \ast x_j ).
        \end{align}
        \item Extend the product $\ast$ to the $\FF_p$-vector space $\Rcal$ by the distributive law.
    \end{enumerate}
\end{df}

Shi showed in her PhD thesis \cite{Shi2018} that the $r$-shuffle product $\ast$ on $\Rcal$ indeed encodes the $r$-shuffle relations of Thakur's multiple zeta values. Precisely, we have the following theorem:

\begin{thm}[{\cite[Theorem 3.1.4]{Shi2018}}]\label{thm:Shi-r-shuffle}
    For $d\geq 0$, let $\widehat{S_{<d}}, \widehat{\zeta_A}:\Rcal \to K_\infty$ be the unique $\FF_p$-linear maps satisfying
    \[
    \widehat{S_{<d}}(1) := 1,
    \quad
    \widehat{S_{<d}}(x_{\ww{s}})
    := S_{<d}(\ww{s}),
    \]
    and
    \[
    \widehat{\zeta_A}(1):=1,
    \quad
    \widehat{\zeta_A} (x_{\ww{s}})
    := \lim_{d\to \infty} \widehat{S_{<d}}(x_{\ww{s}})
    = \zeta_A(\ww{s}).
    \]
    Then $\widehat{S_{<d}}$ is an $\FF_p$-algebra homomorphism.
    Consequently, $\widehat{\zeta_A}$ is also an $\FF_p$-algebra homomorphism, i.e., 
    \[
    \widehat{\zeta_A}(x_{\ww{r}} \ast x_{\ww{s}})=\zeta_A(\ww{r})\zeta_A(\ww{s}).
    \]
\end{thm}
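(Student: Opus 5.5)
The plan is to prove that $\widehat{S_{<d}}$ is multiplicative by induction on $\dep(\ww{r})+\dep(\ww{s})$, working at the level of the truncated sums $S_{<d}$ for all $d$ simultaneously. The limit statement for $\widehat{\zeta_A}$ then follows by letting $d\to\infty$, since multiplication in $K_\infty$ is continuous.

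The base case, where one index is empty, is immediate from rule (1).

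For the inductive step, I would first record the elementary decomposition
\[
S_{<d}(\ww{s})=\sum_{e<d}S_{e}(s_1)\,S_{<e}(\ww{s}^-).
\]
I then split the product $S_{<d}(\ww{r})S_{<d}(\ww{s})$ according to the leading degrees $e$ and $f$ of the two sums. There are three cases.
\begin{itemize}
\item The case $e>f$ gives $\sum_{e<d}S_e(r_1)\,S_{<e}(\ww{r}^-)S_{<e}(\ww{s})$. By induction this equals $\widehat{S_{<d}}\bigl(x_{r_1}(x_{\ww{r}^-}\ast x_{\ww{s}})\bigr)$, using that $\widehat{S_{<d}}(x_k w)=\sum_{e<d}S_e(k)\widehat{S_{<e}}(w)$ for any word $w$.
\item The case $f>e$ is symmetric and gives the term with $x_{s_1}$.
\item The case $e=f$ gives $\sum_{e<d}S_e(r_1)S_e(s_1)\,S_{<e}(\ww{r}^-)S_{<e}(\ww{s}^-)$.
\end{itemize}

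The key input for the diagonal case is a depth-one identity for power sums of a fixed degree:
\[
S_e(r_1)S_e(s_1)=S_e(r_1+s_1)+\sum_{i+j=r_1+s_1}\Delta^{i,j}_{r_1,s_1}\,S_e(i)\,S_{<e}(j).\qquad(\star)
\]
This is exactly the degree-$e$ layer of Chen's formula, i.e.\ the statement that $\widehat{S_{<d}}$ is multiplicative in depth one for every $d$, since the $e$-th differences of both sides agree. Chen's proof is in fact at this finite level, so I would cite it, or reprove it via Chen's partial fraction identity for $\sum_{a\neq b\in A_{+,e}}a^{-r_1}b^{-s_1}$.

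Given $(\star)$, the diagonal case becomes
\[
\sum_{e<d}S_e(r_1+s_1)\,S_{<e}(\ww{r}^-)S_{<e}(\ww{s}^-)+\sum_{i+j}\Delta^{i,j}_{r_1,s_1}\sum_{e<d}S_e(i)\,\bigl[S_{<e}(\ww{r}^-)S_{<e}(\ww{s}^-)\bigr]\,S_{<e}(j).
\]
Applying the inductive hypothesis twice (first to $x_{\ww{r}^-}\ast x_{\ww{s}^-}$, which has smaller total depth, and then to its product with $x_j$) turns the bracketed products into $\widehat{S_{<e}}$ of the corresponding $\ast$-products. This matches the last two terms of \eqref{eq:inductive-r-shuffle}.

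The main obstacle is this second use of induction. The product $(x_{\ww{r}^-}\ast x_{\ww{s}^-})\ast x_j$ is a linear combination of words whose depth is not obviously smaller than $\dep(\ww{r})+\dep(\ww{s})$. I would handle this with a double induction: on total depth first, and for products with a depth-one word $x_j$ by an inner induction on the depth of the other factor. Concretely, I would show $\widehat{S_{<e}}(w\ast x_j)=\widehat{S_{<e}}(w)S_{<e}(j)$ for all words $w$ of depth at most $N$ before treating total depth $N+2$. Words in $x_{\ww{r}^-}\ast x_{\ww{s}^-}$ have depth at most $\dep(\ww{r})+\dep(\ww{s})-2$, so that $(\text{word})\ast x_j$ has total depth at most $\dep(\ww{r})+\dep(\ww{s})-1$ and the ordering is consistent.

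A secondary point is checking that $(\star)$ holds with exactly the coefficients $\Delta$ of \eqref{eq:df_Delta}, including the vanishing unless $(r-1)\mid j$. This reduces to Chen's identity, and beyond that the remaining arguments are bookkeeping.
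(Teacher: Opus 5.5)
Your proposal is correct and is essentially the paper's argument: the paper obtains this statement as the case $[a]=a$ of Theorem \ref{thm:abstract-r-shuffle}, using the same induction on total depth and the same three-way split by leading degree (Lemmas \ref{lem:H<d=sumHd} and \ref{lem:d=d product <d}); your $(\star)$ is Lemma \ref{lem:shuffle base}, proved there by the partial-fraction identity, and the passage to the limit is Corollary \ref{cor:Z hat shuffle}. The obstacle you flag is settled by the depth bound you state: every word in $x_{\ww{r}^-}\ast x_{\ww{s}^-}$ has depth at most $\dep(\ww{r})+\dep(\ww{s})-2$, which is immediate from the recursion, so ordinary induction on total depth already suffices, and this bound is what the paper's appeal to the induction hypothesis tacitly relies on.
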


Next, we consider the \textit{finite multiple zeta values over $K$}. We begin with the ring
\[
\Acal_K=\left(\prod_{v}\FF_v\right)\Bigg/\left(\bigoplus_v \FF_v\right),
\]
where $v$ ranges over all monic irreducible polynomials in $A$ and $\FF_v=A/(v)$. Notice that we have a natural embedding $K \hookrightarrow \Acal_K$. The $K$-algebra $\Acal_K$ is a natural analog of Kontsevich's ring $\Acal$ and was studied in \cite{CM17,Shi2018}. The finite multiple zeta value over $K$ is defined by
\[
\zeta_{\Acal_K}(\ww{s})=\left(S_{<\deg v}(\ww{s})\mod{v}\right)_v\in\Acal_K,\quad \varnothing\neq \ww{s}=(s_1,\ldots,s_m)\in\Ical^{\ext}.
\]
We set $\zeta_{\Acal_K}(\varnothing)=(1)_v$ and denote the $K$-algebra generated by all finite multiple zeta values over $K$ by $\Zcal_{\Acal_K}=\langle \zeta_{\Acal_K}(\ww{s}):\ww{s}\in\Ical\rangle_K=\langle \zeta_{\Acal_K}(\ww{s}):\ww{s}\in\Ical^{\ext}\rangle_K$. It is immediate from the definitions and Theorem \ref{thm:Shi-r-shuffle} that the finite multiple zeta values over $K$ at positive indices also satisfy the same $r$-shuffle relations as Thakur's multiple zeta values. 
\subsection{Main Results and Organization of the Paper}
In this paper, we introduce the concepts of the \textit{$u$-bracket} $[a]_u=\C_a(u)/u$, the \textit{finite multiple harmonic $u$-series} $H_{<d}(\ww{s};u)$, and \textit{$u$-multiple zeta values} $\zeta_u(\ww{s})$ (see Definitions \ref{df-u-bracket}, \ref{df:fmhus}, and \ref{df:uMZV}). These objects serve as function field counterparts to the classical $q$-bracket $[n]_q$, the finite multiple harmonic $q$-series $Z_{<n}(\ww{s};q)$, and $q$-multiple zeta values $\zeta_q(\ww{s})$, respectively. Based on these new definitions, we establish the following results:
\begin{thm}[restated as Theorem \ref{thm:limit of u}]\label{thm:main1}
    Let $\nfk\in A_+$. For any index $\ww{s}\in\Ical^{\ext}$, we have $$\zeta_A(\ww{s})=\lim_{\deg\nfk\to\infty}H_{<\deg\nfk}(\ww{s};\lambda_\nfk).$$
\end{thm}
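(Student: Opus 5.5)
The plan is to compare $H_{<\deg\nfk}(\ww{s};\lambda_\nfk)$ with $S_{<\deg\nfk}(\ww{s})$ term by term, splitting the sum over degrees into a "low-degree" part, where $[a]_{\lambda_\nfk}$ is very close to $a$, and a "high-degree" part, which is small on both sides. I will assume that $H_{<d}(\ww{s};u)$ is built, as in Definition \ref{df:fmhus}, from the single-degree sums $\sum_{a\in A_{+,d_i}}[a]_u^{-s_i}$ (possibly with $u$-power weights analogous to $q^{(s-1)n}$) over $d>d_1>\cdots>d_m\geq 0$. If there are weight factors, they should tend to $1$ in the same way as the brackets do.

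The first step is to get uniform control of $[a]_{\lambda_\nfk}=\C_a(\lambda_\nfk)/\lambda_\nfk=\exp_\C(\cpi a/\nfk)/\exp_\C(\cpi/\nfk)$ for $\deg a<\deg\nfk$. I will use the product formula $\exp_\C(z)=z\prod_{0\neq b\in A}(1-z/(\cpi b))$. With $z=\cpi a/\nfk$, every factor has $|z/(\cpi b)|_\infty\le r^{\deg a-\deg\nfk}<1$. This gives two facts:
\begin{enumerate}
\item $|[a]_{\lambda_\nfk}|_\infty=|a|_\infty$ exactly;
\item $|[a]_{\lambda_\nfk}/a-1|_\infty\le r^{\deg a-\deg\nfk}$.
\end{enumerate}
Consequently, for each fixed $D$ and each $a$ with $\deg a<D$, we have $[a]_{\lambda_\nfk}^{-s}\to a^{-s}$ uniformly as $\deg\nfk\to\infty$, for every $s\in\ZZ$.

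For positive indices $\ww{s}\in\Ical$ the argument is then an ultrametric $\varepsilon/2$ argument. Fix $D$. Every term of $H_{<\deg\nfk}(\ww{s};\lambda_\nfk)$ or of $S_{<\deg\nfk}(\ww{s})$ with $d_1\ge D$ has absolute value at most $r^{-D}$, by the first fact. The finitely many terms with $d_1<D$ converge to the corresponding terms of $S_{<D}(\ww{s})$. Hence $\limsup_{\deg\nfk\to\infty}|H_{<\deg\nfk}(\ww{s};\lambda_\nfk)-\zeta_A(\ww{s})|_\infty\le r^{-D}$, and letting $D\to\infty$ gives the claim.

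The main obstacle is indices with non-positive entries. There the individual terms $[a]^{|s|}$ grow, and only cancellation in sums over $A_{+,d}$ makes things converge, as with the vanishing of $S_d(s)$ for $s\le 0$ and large $d$. For this case I would expand
\[
[a]_{u}=\sum_{i\ge 0}[a,i]\,u^{r^i-1}.
\]
Here $[a,i]$ is the $i$-th Carlitz coefficient, which is an $\FF_r$-linear polynomial expression in $a$ of degree $r^i$ (essentially $\prod_{j<i}(a^{r^{i}}-\cdots)/D_i$). This makes $[a]_u^{k}$ a polynomial in $a$ whose coefficients carry powers $u^{r^i-1}$, and $|\lambda_\nfk|_\infty=|\cpi/\nfk|_\infty\to0$. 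The high powers of $u$ are therefore tiny, while each fixed-degree truncation is a polynomial in $a$ of bounded degree. I would apply Thakur's vanishing result \cite[Theorem 5.1.2]{Thakur2004}, in its form for sums of polynomials of bounded degree over $A_{+,d}$, to show that for $d$ large relative to the truncation degree the sums $\sum_{a\in A_{+,d}}[a]_{\lambda_\nfk}^{k}$ are uniformly small. The estimate must be uniform in $\nfk$, and must also cover the mixed products appearing in the nested sum for positive and non-positive entries together. I would then redo the $D$-splitting as above, with the high-degree tail bounded by this uniform smallness instead of by $r^{-d_1}$. Making this uniform tail bound precise for mixed-sign indices is the step I expect to require the most care.
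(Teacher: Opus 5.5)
Your argument for positive indices is correct, and differs from the paper only in technique. You get $|[a]_{\lambda_\nfk}|_\infty=|a|_\infty$ and $|[a]_{\lambda_\nfk}/a-1|_\infty\le r^{\deg a-\deg\nfk}$ from the product expansion of $\exp_\C$, then truncate at degree $D$. The paper instead bounds $[a,i]\lambda_\nfk^{r^i-1}$ directly (Lemma \ref{lem:[a]n=a}) and uses Lemma \ref{lem:prod-identity} to bound $|H_{<\deg\nfk}(\ww{s};\lambda_\nfk)-S_{<\deg\nfk}(\ww{s})|_\infty$ uniformly and explicitly.

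The genuine gap is the non-positive case. The statement covers it, you leave it as a sketch, and the sketch rests on a false premise: the higher-order terms in $u$ are not uniformly tiny. For $\deg a=\deg\nfk-1$ one has $|[a,1]\lambda_\nfk^{r-1}|_\infty=r^{1-r}|a|_\infty$, because the growth of $|[a,1]|_\infty$ compensates $|\lambda_\nfk|_\infty^{r-1}$. Hence, for $k=-s$ with $p\nmid k$, the order-$(r-1)$ piece $ka^{k-1}[a,1]\lambda_\nfk^{r-1}$ of $[a]_{\lambda_\nfk}^k$ has size $r^{1-r}|a|_\infty^k$, which is unbounded. So truncating the $u$-expansion at a fixed order leaves a tail that is not small. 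Rescuing this would need a truncation order growing with $d$, balanced against the decay of $|[a,i]\lambda_\nfk^{r^i-1}|_\infty$ in $i$, and none of that is carried out.

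The missing idea is that no smallness estimate is needed, because the relevant sums vanish exactly for every $u$. By Lemma \ref{lem-Fq-linear-u-bracket}, $a\mapsto[a]_u$ is $\FF_r$-linear, so writing $a=\theta^d+\sum_{\ell<d}c_\ell\theta^\ell$ gives
\[
\sum_{a\in A_{+,d}}[a]_u^k=\sum_{j=0}^k\binom{k}{j}[\theta^d]_u^{k-j}\sum_{c\in\FF_r^d}\Bigl(\sum_{\ell<d}c_\ell[\theta^\ell]_u\Bigr)^j .
\]
Expanding multinomially, every monomial carries a factor $\prod_{\ell<d}\sum_{c_\ell\in\FF_r}c_\ell^{j_\ell}$. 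This vanishes unless each $j_\ell$ is a positive multiple of $r-1$. So $H_d(s;u)=0$ identically in $u$ once $d(r-1)>-s$; this is Thakur's proof for $S_d(s)$ verbatim, and it is what the paper uses.

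Now let $s_i$ be the first non-positive entry and choose $N$ with $H_d(s_i;u)=S_d(s_i)=0$ for $d\ge N$. For $\deg\nfk\ge N$,
\[
H_{<\deg\nfk}(\ww{s};\lambda_\nfk)=\sum_{d_i=0}^{N-1}H_{d_i}(s_i,\ldots,s_m;\lambda_\nfk)\sum_{\deg\nfk>d_1>\cdots>d_{i-1}>d_i}\prod_{j<i}H_{d_j}(s_j;\lambda_\nfk),
\]
with the inner sum read as $1$ when $i=1$; the same holds for $S_{<\deg\nfk}(\ww{s})$. Each $H_{d_i}(s_i,\ldots,s_m;\lambda_\nfk)$ is a fixed finite sum, converging termwise to $S_{d_i}(s_i,\ldots,s_m)$ by your fact (2). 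The inner sums involve only the positive entries $s_1,\ldots,s_{i-1}$, so your $D$-splitting handles them unchanged. No uniform tail bound for mixed-sign indices is ever needed.
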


\begin{thm}[restated as Theorem \ref{thm:u to finite MZV}]\label{thm:main2}
    For any index $\ww{s}\in\Ical^{\ext}$, we have
    $$
        \zeta_{\mathcal{A}_K}(\ww{s})=\left(H_{<\deg v}(\ww{s};\lambda_v)\mod{\lambda_v}\right)_v\in\mathcal{A}_K.
    $$
\end{thm}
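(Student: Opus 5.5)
The plan is to reduce everything to the congruence $[a]_{\lambda_v}\equiv a \pmod{\lambda_v}$ and then compare $H_{<\deg v}(\ww{s};\lambda_v)$ with $S_{<\deg v}(\ww{s})$ summand by summand. I have not yet seen the precise shape of the numerators in Definition \ref{df:fmhus}. I will assume they are polynomials in $u$ over $A$ (or $\FF_r[u]$) whose value at $u=0$ is the trivial weight, as the analog of $q^{(s-1)n}$ evaluating to $1$ at $q=1$. The argument should go through for any normalization with this property.

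\textbf{Step 1: the arithmetic of $\lambda_v$.} Fix a monic irreducible $v$ and set $\Ocal_v=A[\lambda_v]$. Since $\C_v(X)/X$ is Eisenstein at $v$ (its constant term is $v$ and its other non-leading coefficients $[v,i]$ are divisible by $v$), the ideal $(\lambda_v)$ is a prime of $\Ocal_v$ lying totally ramified over $v$. Hence $\Ocal_v/(\lambda_v)\cong \FF_v$, and reduction modulo $\lambda_v$ restricted to $A$ is reduction modulo $v$.

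\textbf{Step 2: well-definedness and the key congruence.} From $\C_a(u)=\sum_{i=0}^{\deg a}[a,i]u^{r^i}$ and $[a,0]=a$ we get
\[
[a]_u = a+\sum_{i\ge 1}[a,i]u^{r^i-1}\in A[u],
\]
and therefore $[a]_{\lambda_v}\equiv a\pmod{\lambda_v}$. For $a\in A_+$ with $\deg a<\deg v$ we have $v\nmid a$, so $\C_a(\lambda_v)\neq 0$ because $\Lambda_v\cong A/(v)$ is generated by $\lambda_v$. Thus $[a]_{\lambda_v}$ is nonzero, which makes $H_{<\deg v}(\ww{s};\lambda_v)$ well defined. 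Moreover $[a]_{\lambda_v}$ is a unit at $\lambda_v$, since its reduction is $a\not\equiv 0 \pmod v$. It follows that for every $s\in\ZZ$, whether positive or non-positive, $[a]_{\lambda_v}^{-s}$ is $\lambda_v$-integral and congruent to $a^{-s}$. Likewise each numerator, being in $A[u]$, reduces to its value at $u=0$.

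\textbf{Step 3: term-by-term comparison.} The series $H_{<\deg v}(\ww{s};\lambda_v)$ is a finite sum indexed by tuples $(a_1,\ldots,a_m)$ of monic polynomials with $\deg v>\deg a_1>\cdots>\deg a_m\ge 0$. By Step 2, each summand lies in the localization of $\Ocal_v$ at $(\lambda_v)$ and is congruent to $a_1^{-s_1}\cdots a_m^{-s_m}$ modulo $\lambda_v$. Summing over the tuples gives
\[
H_{<\deg v}(\ww{s};\lambda_v)\equiv S_{<\deg v}(\ww{s}) \pmod{\lambda_v},
\]
and both sides are read in $\FF_v$ via Step 1. The empty index and the case $\dep(\ww{s})>\deg v$ are trivial, since both sides are then $1$ or $0$. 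Collecting these congruences over all $v$ gives the stated identity in $\Acal_K$.

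\textbf{Main obstacle.} The delicate point is Step 2 applied to the actual numerators of Definition \ref{df:fmhus}. If they are not polynomials in $u$, for example if they involve ratios such as $\C_a(u)^{k}/u^{k}$ or other powers of $[a]_u$, I will first rewrite each summand as a product of powers of $[a]_u$ and polynomials in $u$. I will then check that its reduction modulo $\lambda_v$ is exactly $a^{-s}$, i.e. that every extra factor reduces to $1$ at $u=0$. Doing this uniformly for non-positive $s$, where the exponents flip sign, is the part that needs the most care.
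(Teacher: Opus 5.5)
Your proof is correct and essentially the paper's: total ramification gives $A[\lambda_v]/(\lambda_v)\cong\FF_v$, and then the congruence $[a]_{\lambda_v}\equiv a \pmod{\lambda_v}$, with $[a]_{\lambda_v}$ a $\lambda_v$-unit for $\deg a<\deg v$, lets you compare the sums term by term. Your worry about numerators is unfounded, since Definition \ref{df:fmhus} has none (each summand is just $\prod_i [a_i]_u^{-s_i}$); and where the paper cites Rosen for $[a]_{\lambda_v}$ being a unit in $A[\lambda_v]$ and for total ramification, your local argument via the Eisenstein polynomial $\C_v(X)/X$ and the residue computation is an equally valid, self-contained substitute.
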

Roughly speaking, taking the ``analytic limit'' of the finite multiple harmonic $u$-series at Carlitz torsion points leads to Thakur's multiple zeta values, while taking the ``algebraic limit'' provides the finite multiple zeta values over $K$ (cf. Theorems \ref{thm:1.1} and \ref{thm:1.2}). 

Furthermore, we compute the formal power series expansion of $u$-multiple zeta values and show that their coefficients are in fact $K$-linear combinations of Thakur's multiple zeta values:

\begin{thm}[restated as Propositions \ref{prop:expansion of umzv} and \ref{prop:gamma mzv}]\label{thm:main3}
    Let $\ww{s}=(s_1,\ldots,s_m)\in\Ical^{\ext}$. Then
    $$\zeta_u(\ww{s}) = \sum_{N=0}^{\infty} \gamma_N(\ww{s}) u^{N(r-1)}\in\CC_\infty\mbb{u},$$
    where 
    $$\gamma_N(\ww{s}) = \sum_{\substack{n_1, \dots, n_m \ge 0 \\ n_1 + \dots + n_m = N}} \sum_{i_1=0}^{n_1(r-1)} \cdots \sum_{i_m=0}^{n_m(r-1)} \left( \prod_{j=1}^m c_{n_j,i_j}^{(s_j)} \right) \zeta_A(s_1-i_1, \dots, s_m-i_m),$$
    for some explicit constants $c_{n,i}^{(s)}\in K$.
\end{thm}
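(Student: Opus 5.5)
The plan is to expand each summand of $H_{<d}(\ww{s};u)$ as a power series in $u^{r-1}$ whose coefficients are polynomials in $a$. Each coefficient of $H_{<d}(\ww{s};u)$ then becomes a finite $K$-linear combination of Thakur's multiple power sums $S_{<d}$ at shifted extended indices. Letting $d\to\infty$ coefficientwise in $\CC_\infty\mbb{u}$ gives the stated formula, with Thakur's multiple zeta values at possibly non-positive indices.

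\textbf{Step 1 (the $u$-bracket as a polynomial family in $a$).} I will use Carlitz's classical formula
$$[a,i]=\frac{e_i(a)}{D_i},\qquad e_i(x)=\prod_{b\in A_{<i}}(x-b)\in A[x].$$
Here $e_i$ is an $\FF_r$-linear polynomial of degree $r^i$ with zero constant term. Hence
$$[a]_u=\frac{\C_a(u)}{u}=a\Bigl(1+\sum_{i\ge1}\frac{e_i(a)}{aD_i}u^{r^i-1}\Bigr),$$
and $e_i(a)/a=:f_i(a)$ is a polynomial in $a$ with $K$-coefficients of degree exactly $r^i-1$. This identity holds uniformly in $a\in A_+$ as an identity of power series in $u$, truncated for $i>\deg a$ automatically because $e_i(a)=0$ there.

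\textbf{Step 2 (expanding $[a]_u^{-s}$).} For any $s\in\ZZ$, I will write
$$[a]_u^{-s}=a^{-s}\sum_{k\ge0}\binom{-s}{k}\Bigl(\sum_{i\ge1}f_i(a)D_i^{-1}u^{r^i-1}\Bigr)^k.$$
When $s\le0$ this is a finite binomial expansion. Since each $r^i-1$ is a multiple of $r-1$, only powers $u^{n(r-1)}$ occur. The key degree count is that a monomial $\prod_j f_{i_j}(a)u^{r^{i_j}-1}$ has $a$-degree at most $\sum_j(r^{i_j}-1)$, which is exactly its $u$-exponent. Therefore the coefficient of $u^{n(r-1)}$ is
$$\sum_{i=0}^{n(r-1)}c^{(s)}_{n,i}\,a^{\,i-s},$$
with constants $c^{(s)}_{n,i}\in K$ independent of $a$; they are explicit from the binomial and multinomial coefficients and from the coefficients of $f_i/D_i$. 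If Definition \ref{df:fmhus} carries an extra $u$-weight in the numerator, analogous to $q^{(s-1)n}$, I expect it to be of the same shape (a power series in $u^{r-1}$ with coefficients polynomial in $a$ of bounded degree). I will absorb it into the $c^{(s)}_{n,i}$ by the same argument.

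\textbf{Step 3 (summing and passing to the limit).} Multiplying the expansions for $s_1,\dots,s_m$ and collecting the coefficient of $u^{N(r-1)}$ over $n_1+\dots+n_m=N$, I will sum over $a_j\in A_{+,d_j}$ with $d>d_1>\dots>d_m\ge0$. This gives
$$[u^{N(r-1)}]\,H_{<d}(\ww{s};u)=\sum_{n_1+\cdots+n_m=N}\sum_{i_1,\dots,i_m}\Bigl(\prod_j c^{(s_j)}_{n_j,i_j}\Bigr)S_{<d}(s_1-i_1,\dots,s_m-i_m).$$
This is a finite sum. Each $S_{<d}$ at an extended index converges as $d\to\infty$ to $\zeta_A$, by the vanishing of $S_d(s)$ for $s\le0$ and large $d$ recalled above. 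Hence the coefficientwise limit defining $\zeta_u(\ww{s})$ in $\CC_\infty\mbb{u}$ is exactly $\sum_N\gamma_N(\ww{s})u^{N(r-1)}$.

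\textbf{Main obstacle.} The delicate point is justifying that $\zeta_u(\ww{s})$, as defined in Definition \ref{df:uMZV}, really equals this coefficientwise limit. This matters especially if it is defined analytically for $|u|_\infty$ small, rather than formally. I would first check that for each fixed $N$ only finitely many terms contribute, which is the degree bound $i_j\le n_j(r-1)$. If an analytic comparison is needed, I would then bound $|c^{(s)}_{n,i}|$ using $|D_i|_\infty=r^{ir^i}$ to obtain uniform convergence on a small disc, so that summation order can be exchanged.
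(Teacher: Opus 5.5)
Your proposal is correct and is essentially the paper's proof (Propositions \ref{prop:expansion of umzv} and \ref{prop:gamma mzv}). Your $e_i(X)/(XD_i)$ is exactly the paper's $P_i(X)$: the paper obtains it from $\C_a(u)=\exp_\C(a\log_\C(u))$ in Lemma \ref{lem:Carlitz coefficient} rather than from Carlitz's formula $[a,i]=e_i(a)/D_i$, and the rest (binomial expansion, the degree count $i\le n(r-1)$, rewriting coefficients through $S_d(s-i)$, coefficientwise limit) is identical. Your ``main obstacle'' does not arise, since in \S\ref{section:4.2} the paper takes $\zeta_u(\ww{s})$ to be precisely the coefficientwise limit of $H_{<d}(\ww{s};u)$ in $\CC_\infty\mbb{u}$, and Definition \ref{df:fmhus} carries no extra $u$-weight.
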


We remark that the above theorems hold not only for positive indices but also for non-positive ones.

In addition, following the approach of \cite{Shi2018} (see also \cite{Thakur2010}), we show that the finite multiple harmonic $u$-series $H_{<d}(\ww{s};u)$ and the $u$-multiple zeta values $\zeta_u(\ww{s})$ satisfy the same $r$-shuffle relations as Thakur's multiple zeta values (see Corollaries \ref{cor:Hu shuffle}, \ref{cor:zeta u shuffle}, \ref{cor:Hu shuffle formal} and \ref{cor:zeta u shuffle formal}). As an application, we find a Hasse-Schmidt derivation $(\Dcal_N)_{N\geq 0}$ over the realization map $\widehat{\zeta_A}:\Rcal\to\Zcal_\infty$ of Thakur's multiple zeta values. More precisely, we have the following theorem:
\begin{thm}[restated as Theorem \ref{thm:D_N_derivation_rigorous}]\label{thm:main4}
    For $N\geq 0$, let $\Dcal_N : \Rcal \to \Zcal_\infty$  be the unique $\FF_p$-linear map such that
    $\Dcal_N(x_{\varnothing})=1$
    and 
    $$\Dcal_N(x_{\ww{s}}) := \sum_{\substack{n_1, \ldots, n_m \ge 0 \\ n_1 + \dots + n_m = N}} \left( \prod_{j=1}^m \binom{-s_j}{n_j} \right) \zeta_A(s_1-n_1(r-1), \dots, s_m-n_m(r-1))$$
    for non-empty index $\ww{s}=(s_1,\ldots,s_m)\in\Ical$. Then
    for any indices $\ww{r}, \ww{s} \in \Ical$, we have $\Dcal_0=\widehat{\zeta_A}$ and
    $$\Dcal_N(x_{\ww{r}} \ast x_{\ww{s}}) = \sum_{k=0}^N \Dcal_k(x_{\ww{r}}) \Dcal_{N-k}(x_{\ww{s}}).$$
\end{thm}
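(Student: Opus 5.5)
The idea is to realize $(\Dcal_N)_{N\ge 0}$ as the coefficients of a single $\FF_p$-algebra homomorphism $\Rcal\to\Zcal_\infty\mbb{t}$. The Leibniz rule is then just multiplicativity of that homomorphism, read off coefficient by coefficient in $t$.

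\textbf{Step 1: the deformed series.} Let $t$ be a formal variable. For $a\in A_+$ the binomial series gives
\[
\frac{1}{(a+a^r t)^{s}}=a^{-s}\bigl(1+a^{r-1}t\bigr)^{-s}=\sum_{n\ge 0}\binom{-s}{n}\frac{t^n}{a^{\,s-n(r-1)}} .
\]
Define the deformed multiple power sums by replacing each $a_j^{-s_j}$ in $S_{<d}(\ww{s})$ with $(a_j+a_j^r t)^{-s_j}$, and call the result $S^t_{<d}(\ww{s})\in K\mbb{t}$. Expanding each factor as above and collecting the coefficient of $t^N$ gives
\[
\sum_{n_1+\cdots+n_m=N}\Bigl(\prod_j\binom{-s_j}{n_j}\Bigr)S_{<d}\bigl(s_1-n_1(r-1),\dots,s_m-n_m(r-1)\bigr).
\]
Letting $d\to\infty$ coefficientwise is legitimate because $S_{d}(k)\to 0$ for every $k\in\ZZ$, including non-positive $k$. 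So the coefficient of $t^N$ in $\zeta^t(\ww{s}):=\lim_d S^t_{<d}(\ww{s})$ is exactly $\Dcal_N(x_{\ww{s}})$. At $t=0$ we recover $\zeta_A(\ww{s})$, which gives $\Dcal_0=\widehat{\zeta_A}$. This is the formal shape of the expansion in Theorem \ref{thm:main3}: the substitution $a\mapsto a+a^rt$ is the first-order truncation of $[a]_u=\C_a(u)/u$ after normalizing $u^{r-1}$ to $t$, and it kills the extra constants $c^{(s)}_{n,i}$ with $i\neq n(r-1)$.

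\textbf{Step 2: the $r$-shuffle relations for the deformed sums.} I then need that the $\FF_p$-linear map $x_{\ww{s}}\mapsto S^t_{<d}(\ww{s})$ is an algebra homomorphism $(\Rcal,\ast)\to K\mbb{t}$. I expect this to be the content of Corollaries \ref{cor:Hu shuffle formal} and \ref{cor:zeta u shuffle formal}, and I would invoke them. If they are stated for $[a]_u$ rather than for $a+a^rt$, I would re-run the argument of Theorem \ref{thm:Shi-r-shuffle} (following \cite{Shi2018, Thakur2010}) for the map $\phi(a)=a+a^rt$. That argument uses only three facts:
\begin{itemize}
\item[(i)] The map $\phi$ is $\FF_r$-linear and injective on $A$.
\item[(ii)] The monic polynomials of degree $d$ form an affine $\FF_r$-space $\theta^d+A_{<d}$, whose image $\phi(\theta^d)+\phi(A_{<d})$ is again an affine $\FF_r$-space.
\item[(iii)] The depth-one products $S_d(r_1)S_d(s_1)$ are handled by Chen's identity, which is a formal consequence of (i) and (ii) (sums of $x^{-k}$ over $\FF_r$-translates of a finite $\FF_r$-space), with coefficients $\Delta^{i,j}_{r_1,s_1}$ independent of the particular space.
\end{itemize}
The induction on the sum of depths in \eqref{eq:inductive-r-shuffle} then goes through verbatim. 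Passing to the limit $d\to\infty$ coefficientwise gives $\zeta^t(x_{\ww{r}}\ast x_{\ww{s}})=\zeta^t(\ww{r})\zeta^t(\ww{s})$ in $K_\infty\mbb{t}$.

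\textbf{Step 3: comparing coefficients.} Comparing the coefficients of $t^N$ on both sides of this identity, and using Step 1 on each side (extended to $\Rcal$ by linearity), gives
\[
\Dcal_N(x_{\ww{r}}\ast x_{\ww{s}})=\sum_{k=0}^N\Dcal_k(x_{\ww{r}})\,\Dcal_{N-k}(x_{\ww{s}}).
\]
The values lie in $\Zcal_\infty$ because $\Zcal_\infty$ also contains the values at non-positive indices.

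\textbf{Main obstacle.} The hardest point is Step 2: confirming that the depth-one product formula (Chen's identity, hence the $\Delta^{i,j}_{r_1,s_1}$) holds for power sums over $\phi(\theta^d+A_{<d})$, i.e.\ that it depends only on the $\FF_r$-affine structure and not on multiplicativity. My first attempt would be to use Chen's partial-fraction identity $\frac{1}{x^{r_1}(x+y)^{s_1}}=\cdots$, summed over $x,y$ ranging over $\FF_r$-spaces. This identity is valid for arbitrary elements of a commutative ring, so applying it to $x=\phi(a)$ and $y=\phi(b)$ should work unchanged. The delicate part is the vanishing of the sums $\sum_{y\in V}y^{-k}$ with $k\le 0$ that feed into the $\Delta$-terms; these vanishings should depend only on $V$ being a finite $\FF_r$-space.
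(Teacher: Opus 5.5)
Your proposal is correct and is essentially the paper's proof. The paper introduces exactly the bracket $[a]_X=a+a^rX$ and checks that it is additive, $\FF_r$-homogeneous, and a unit for monic $a$. It then applies its abstract $r$-shuffle framework (Theorem~\ref{thm:abstract-r-shuffle} and Corollary~\ref{cor:Z hat shuffle}, which is precisely your Step 2 packaged once and for all), identifies the $X^N$-coefficient of $\lim_d \H^X_{<d}(\ww{s})$ with $\Dcal_N(x_{\ww{s}})$ (Lemma~\ref{lem:v-bracket properties}), and compares coefficients. Your anticipated obstacle is a misdiagnosis rather than a gap. The depth-one step needs no vanishing of $\sum_{y\in V}y^{-k}$ for $k\le 0$. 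It needs only four ingredients: the partial-fraction identity; the reindexing $b=a-f$ with $[a]-[b]=[f]$; invertibility of $[f]$ for $f\neq 0$, which is automatic since $f+f^rX$ has nonzero constant term; and $\sum_{\varepsilon\in\FF_r^\times}\varepsilon^{-j}$, which equals $-1$ if $(r-1)\mid j$ and $0$ otherwise.
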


In effect, Theorem \ref{thm:main4} provides a family of explicit relations between Thakur's multiple zeta values at both positive and non-positive indices (cf. \eqref{eq:derivation}, \eqref{eq:N2_final_derivation}).

\medskip
The introduction of $u$-multiple zeta values naturally raises several intriguing questions for future research. For instance, given that multiple harmonic $u$-series serve as a bridge between Thakur's multiple zeta values analytically and finite multiple zeta values over $K$ algebraically, one might ask if they admit a natural connection to \textit{$v$-adic multiple zeta values} (see \cite{CM19, CM21, CCM22}), or even to \textit{adelic multiple zeta values over $K$} (see \cite{Chen23}). The author aims to explore some of these directions in future work.

\medskip
Having stated our main results, we now outline the contents of the remaining sections. 

In \S\ref{section:2}, we define the $u$-bracket and $u$-Carlitz factorial (see Definitions \ref{df-u-bracket} and \ref{df-u-factorial}), and establish a $u$-analog of Legendre's formula in \S\ref{section:2.1} (see Proposition \ref{prop-u-analog-Sinnott}). Furthermore, we provide estimates on the coefficients of the Carlitz module and the $u$-bracket in \S\ref{section:2.2}, which will be essential for the subsequent analysis. 

In \S\ref{section:3}, we investigate the finite multiple $u$-series and $u$-multiple zeta values, whose definitions are given in \S\ref{section:3.1}. In \S\ref{section:3.2}, we work in a more general setting to prove that these values satisfy the same $r$-shuffle relations as Thakur's multiple zeta values at positive indices. We then establish a Euler--Carlitz-type formula for finite multiple harmonic $u$-series at Carlitz torsion points in \S\ref{section:3.3} (see Theorem \ref{thm:finite Euler Carlitz}). In \S\ref{section:3.4}, we provide the proofs of Theorems \ref{thm:main1} and \ref{thm:main2}. In conjunction with the Euler--Carlitz-type formula, these theorems recover the original Euler--Carlitz formula for Thakur's multiple zeta values and imply the vanishing of finite multiple zeta values over $K$ at $r$-even integers (see Corollaries \ref{cor:Euler-Carlitz} and \ref{cor:vanishing of fmzv qeven}). 

In \S\ref{section:4}, we further discuss the properties of $u$-multiple zeta values. In \S\ref{section:4.1}, we view these values as functions on the Drinfeld upper-half plane by setting $u = \exp_\C(\cpi z)$, and prove that they are rigid analytic. In \S\ref{section:4.2}, we treat $u$-multiple zeta values as formal power series in $u$. Specifically, we compute their explicit coefficients and provide a proof of Theorem \ref{thm:main3}. Finally, in \S\ref{section:4.3}, inspired by this formal expansion, we define the operators $(\Dcal_N)_{N \geq 0}$ and prove Theorem \ref{thm:main4} by applying the abstract framework of $r$-shuffle relations established in \S\ref{section:3.2}.
\section*{Acknowledgement}
The author would like to thank Chieh-Yu Chang for suggesting this research and for his careful review of the manuscript, which greatly improved its quality. The author also thanks Song-Yun Chen and Fei-Jun Huang for helpful discussions. The financial support provided by the National Science and Technology Council (NSTC) during the course of this work is also gratefully acknowledged.
\section{\texorpdfstring{$u$}{u}-Bracket and \texorpdfstring{$u$}{u}-Carlitz Factorial}\label{section:2}
\subsection{\texorpdfstring{$u$}{u}-Bracket and \texorpdfstring{$u$}{u}-Carlitz Factorial}\label{section:2.1}
\begin{df}\label{df-u-bracket}
    For each $a \in A$, we define the \textit{$u$-bracket} as the polynomial in the formal variable $u$ given by
    $$
    [a]_u := \frac{\C_a(u)}{u} \in \CC_\infty[u]\subset\CC_\infty\mbb{u}.
    $$
    When $u$ is specialized to a value in $\CC_\infty$, this recovers the evaluation
    $$
    [a]_u = 
    \begin{cases} 
    \frac{\C_a(u)}{u} & \text{if } u \neq 0, 
    \\ a & \text{if } u = 0. 
    \end{cases}
    $$
\end{df}

One sees that $[a]_u \to a$ as $u \to 0$, analogous to the classical $q$-bracket $[n]_q = \frac{q^n-1}{q-1} \to n$ as $q \to 1$. Furthermore, we have the following immediate properties:
\begin{lem}\label{lem-Fq-linear-u-bracket}
    For $a,b\in A$ and $\varepsilon\in\FF_r$, we have
    \begin{enumerate}
        \item $[a+b]_u=[a]_u+[b]_u$.
        \item $[\varepsilon a]_u=\varepsilon[a]_u$.
        \item $[a]_u[b]_{\C_a(u)}=[a]_{\C_b(u)}[b]_u=[ab]_u$.
    \end{enumerate}
\end{lem}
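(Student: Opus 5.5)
The plan is to derive all three identities from two structural facts about the Carlitz module. First, $a\mapsto \C_a$ is an $\FF_r$-linear ring homomorphism $A\to\operatorname{End}_{\FF_r}(\GG_a)$. Second, each $\C_a(X)$ is an $\FF_r$-linear polynomial with constant term $[a,0]=a$ in front of $X$, so $\C_a(X)/X$ is a genuine polynomial in $X$. We work in the polynomial ring $\CC_\infty[u]$, where division by $u$ is legitimate because every $\C_a(u)$ is divisible by $u$. The specialization statements at $u\in\CC_\infty$ then follow by evaluation, including $u=0$, since both sides are polynomial identities.

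For (1) and (2), additivity of the homomorphism gives $\C_{a+b}=\C_a+\C_b$ and $\C_{\varepsilon a}=\varepsilon\C_a$ as elements of $A\{\tau\}$. This holds because $a\mapsto\C_a$ is $\FF_r$-linear, with $\C_\varepsilon=\varepsilon\id$ for $\varepsilon\in\FF_r$. Applying these identities to $u$ and dividing by $u$ gives both statements immediately.

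For (3), the key input is multiplicativity, $\C_{ab}=\C_a\circ\C_b=\C_b\circ\C_a$, which uses that $A$ is commutative. We then write
\[
[ab]_u=\frac{\C_a(\C_b(u))}{u}=\frac{\C_a(\C_b(u))}{\C_b(u)}\cdot\frac{\C_b(u)}{u}.
\]
Here $\frac{\C_a(\C_b(u))}{\C_b(u)}$ should be read as $[a]_{\C_b(u)}$, meaning the polynomial $\C_a(X)/X$ evaluated at $X=\C_b(u)$. The factorization then becomes $[ab]_u=[a]_{\C_b(u)}[b]_u$. The other factorization, $[ab]_u=[b]_{\C_a(u)}[a]_u$, follows by symmetry using $\C_b\circ\C_a$.

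The only point needing care is that $\C_b(u)$ could vanish, either as a polynomial (when $b=0$) or at specific specializations of $u$. To avoid dividing by zero, we argue at the level of polynomials. Set $P_a(X):=\C_a(X)/X\in A[X]$, so that $\C_a(X)=XP_a(X)$. Substituting $X=\C_b(u)=uP_b(u)$ gives
\[
\C_{ab}(u)=\C_a(\C_b(u))=uP_b(u)\,P_a\bigl(\C_b(u)\bigr).
\]
Dividing by $u$ in $\CC_\infty[u]$ yields $[ab]_u=P_a(\C_b(u))P_b(u)=[a]_{\C_b(u)}[b]_u$ as polynomials. This identity remains valid under any specialization of $u$, and it is consistent with the convention $[a]_0=a$, since $P_a(0)=a$. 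This is the main (mild) obstacle; everything else is formal.
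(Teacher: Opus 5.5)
Your proposal is correct and follows the same route as the paper: $\FF_r$-linearity of $a\mapsto\C_a$ gives (1) and (2), and the factorization $\C_{ab}(u)=\C_a(\C_b(u))=\C_b(\C_a(u))$ gives (3). Your use of $P_a(X)=\C_a(X)/X\in A[X]$ is a small but welcome refinement, because the paper's displayed chain divides by $\C_a(u)$ and $\C_b(u)$, which breaks down when $a$ or $b$ is $0$ or at specializations of $u$ where these vanish.
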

\begin{proof}
    (1), (2) follows directly from the $\FF_r$-linearity of Carlitz action. (3) follows from the fact that
    $$[a]_u[b]_{\C_a(u)}=\frac{\C_a(u)}{u}\cdot\frac{\C_b(\C_a(u))}{\C_a(u)}=\frac{\C_{ab}(u)}{u}=\frac{\C_a(\C_b(u))}{\C_b(u)}\cdot\frac{\C_b(u)}{u}=[a]_{\C_b(u)}[b]_u.$$
\end{proof}

We now use $u$-brackets to define a $u$-analog of Carlitz factorials.
\begin{df}\label{df-u-factorial}
    Let $n=\sum_{d=0}^\infty n_dr^d\in\ZZ_{\geq 0}$ where $0\leq n_d\leq r-1$. We define the \textit{$n$-th $u$-Carlitz factorial} by
    $$
    \Gamma_{u,n+1}=\prod_{d=0}^\infty D_{u,d}^{n_d}\in\CC_\infty[u]
    $$
    where for each $d\in\ZZ_{\geq 0}$,
    $$
    D_{u,d}:=\prod_{a\in A_{+,d}}[a]_u\in\CC_\infty[u].
    $$
\end{df}

Recall that the $n$-th Carlitz factorial is defined by 
$\Gamma_{n+1}:=\prod_{d=0}^\infty D_d^{n_d}$
where
$
D_d=\prod_{a\in A_{+,d}}a\in A
$.
A celebrated result by W. Sinnott states that (see \cite[Theorem 9.1.1]{Goss1996})
\begin{equation}\label{eq-Sinnott's identity}
    \Gamma_{n+1}=\prod_{v \text{ monic irreducible}}v^{\sum_{e\geq 1}\floor{n/|v^e|_\infty}},
\end{equation}
which serves as the function field analog of Legendre's formula for the classical factorial
$$
n!=\prod_{p\text{ prime}}p^{\sum_{e\geq 1}\floor{n/|p^e|}}.
$$

We briefly recall the notion of \textit{Carlitz cyclotomic polynomials} (see \cite[Definition 7.1.4]{Papikian2023}). For $a\in A_+$, the $a$-th Carlitz cyclotomic polynomial is defined as $$\Phi^\C_a(X):=\prod_{\substack{\deg b<\deg a\\\gcd(a,b)=1}}(X-\C_b(\lambda_a))\in K[X],$$
which serves as the function field analog of the classical cyclotomic polynomial $\Phi_k(X)\in\QQ[X]$. Then the following proposition provides a $u$-analog of Sinnott's identity.
\begin{prop}\label{prop-u-analog-Sinnott}
    For $n\in\ZZ_{\geq 0}$, we have
    $$\Gamma_{u,n+1}=\prod_{\substack{a\in A_+\\a\neq 1}}\Phi^{\C}_a(u)^{\floor{n/|a|_\infty}}$$
    where $\Phi^{\C}_a(x)$ denotes the $a$-th Carlitz cyclotomic polynomial.
\end{prop}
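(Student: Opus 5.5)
The plan is to reduce everything to the single factorization of $D_{u,d}$:
\[
D_{u,d}=\prod_{\substack{a\in A_+\\ 1\le \deg a\le d}}\Phi^{\C}_a(u)^{r^{d-\deg a}} .
\]
Suppose this identity is established. Since $\Gamma_{u,n+1}=\prod_d D_{u,d}^{n_d}$, the exponent of $\Phi^{\C}_a(u)$ in $\Gamma_{u,n+1}$ for $\deg a=e\ge 1$ equals
\[
\sum_{d\ge e} n_d r^{d-e}=\floor{n/r^e}=\floor{n/|a|_\infty},
\]
using the base-$r$ expansion of $n$. For $a=1$ no factor appears, because $\floor{n/1}$ is excluded and $[a]_u$ contributes no $\Phi^{\C}_1$ factor, as explained below. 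So the proposition follows once the factorization of $D_{u,d}$ is proved.

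\textbf{Factorization of a single bracket.} For $b\in A_+$, $\C_b(X)$ is separable of degree $|b|_\infty$ (its derivative is the constant $b\ne 0$). Its roots are exactly $\Lambda_b$, which is cyclic, generated by $\lambda_b$. Write $\Lambda_b=\{\C_c(\lambda_b):\deg c<\deg b\}$ and group the elements by their exact annihilator $a\mid b$, $a$ monic. The elements with annihilator exactly $a$ are the $\C_{c'}(\lambda_a)$ with $\gcd(c',a)=1$ and $\deg c'<\deg a$. This gives
\[
\C_b(X)=\prod_{a\mid b,\ a\in A_+}\Phi^{\C}_a(X),
\]
with both sides monic in $X$; the leading coefficient of $\C_b$ is the leading coefficient of $b$, which is $1$. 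Since $\Phi^{\C}_1(X)=X$ (the only root being $\C_0(\lambda_1)=0$), we get
\[
[b]_u=\prod_{a\mid b,\ a\ne 1}\Phi^{\C}_a(u).
\]

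\textbf{Counting.} Take the product over $b\in A_{+,d}$. For a fixed monic $a$ with $1\le\deg a\le d$, the number of monic $b$ of degree $d$ divisible by $a$ equals the number of monic $c$ of degree $d-\deg a$, which is $r^{d-\deg a}$. This yields the claimed formula for $D_{u,d}$.

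\textbf{Main obstacle.} The only delicate point is justifying the decomposition of $\Lambda_b$ by exact annihilators, i.e.\ the analog of "$x^n-1=\prod_{k\mid n}\Phi_k$". Here I would use the $A$-module isomorphism $A/bA\cong\Lambda_b$, $c\mapsto \C_c(\lambda_b)$. Under this isomorphism, the elements with annihilator exactly $a$ correspond to $(b/a)c'$ with $c'$ a unit mod $a$. Moreover, $\C_{(b/a)c'}(\lambda_b)=\C_{c'}(\lambda_a)$, which holds by the functional equation of $\exp_\C$ once the normalizations $\lambda_b=\exp_\C(\cpi/b)$ are fixed. This matches the definition of $\Phi^{\C}_a$ exactly.
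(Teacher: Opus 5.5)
Your proof is correct and follows the paper's argument: you factor each $\C_b(u)$ as $\prod_{a\mid b}\Phi^{\C}_a(u)$, count the $r^{d-\deg a}$ monic multiples of $a$ in degree $d$, and read off the exponent $\sum_{d\ge \deg a} n_d r^{d-\deg a}=\floor{n/|a|_\infty}$ from the base-$r$ expansion of $n$. The differences are cosmetic. You remove $\Phi^{\C}_1(u)=u$ from each bracket, whereas the paper carries $u^{-n}$ and cancels it against $\Phi^{\C}_1(u)^n$ at the end. You also justify the cyclotomic factorization (via $A/bA\cong\Lambda_b$ and $\C_{(b/a)c'}(\lambda_b)=\C_{c'}(\lambda_a)$), which the paper uses without comment.
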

\begin{proof}
    First, notice that for $d\in\ZZ_{\geq 0}$, we have
    $$
    D_{u,d}=u^{-r^d}\prod_{b\in A_{+,d}}\C_b(u)=u^{-r^d}\prod_{b\in A_{+,d}}\prod_{\substack{a\in A_+\\a\mid b}}\Phi^{\C}_a(u)=u^{-r^d}\prod_{a\in A_{+,\leq d}}\Phi^{\C}_a(u)^{r^{d-\deg a}}.
    $$
    For the last equality, we observe that each $a \in A_{+,\leq d}$ has exactly $r^{d-\deg a}$ monic multiples of degree $d$ in $A$.
    Now, write $n=\sum_{d=0}^\infty n_dr^d$, $0\leq n_d\leq r-1$. 
    Then
    \begin{equation}\label{eq:gamma u n}
        \Gamma_{u,n+1}
                    = u^{-\sum_{d=0}^\infty n_d r^d} \prod_{d=0}^\infty \prod_{a \in A_{+,\le d}} \Phi^{\C}_a(u)^{n_d r^{d-\deg a}}.
    \end{equation}
   For each $a\in A_+$, observe that the total exponent of $\Phi^{\C}_a(u)$ in \eqref{eq:gamma u n} is
    \[
    \sum_{d \ge \deg a} n_d r^{d-\deg a}=\floor{n/r^{\deg a}}=\floor{n/|a|_\infty}.
    \]
    Finally, note that $\Phi^{\C}_1(u)=u$. 
    We conclude
    \[
    \Gamma_{u,n+1} = u^{-n}\Phi^{\C}_1(u)^n \prod_{\substack{a \in A_+\\a\neq 1}} \Phi^{\C}_a(u)^{\lfloor n / |a|_\infty \rfloor}=\prod_{\substack{a \in A_+\\a\neq 1}} \Phi^{\C}_a(u)^{\lfloor n / |a|_\infty \rfloor}.
    \]
\end{proof}

To see that Proposition \ref{prop-u-analog-Sinnott} serves as a $u$-analog of \eqref{eq-Sinnott's identity}, we require the following lemma:
\begin{lem} \label{lem-cyclotomic-zero}
    Let $a \in A_+$ with $a \neq 1$. Then the $a$-th Carlitz cyclotomic polynomial $\Phi^{\C}_a(u)$ evaluated at $u=0$ satisfies
    \[
    \Phi^{\C}_a(0) = 
    \begin{cases} 
        v, & \text{if } a = v^e \text{ for some monic irreducible polynomial } v \text{ and } e \ge 1, \\
        1, & \text{if } a \text{ has at least two distinct monic irreducible factors.}
    \end{cases}
    \]
\end{lem}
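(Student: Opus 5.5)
We prove the lemma by the same Möbius-type argument that computes $\Phi_k(1)$ classically. Everything rests on the factorization $\C_b(u)=\prod_{a\in A_+,\,a\mid b}\Phi^{\C}_a(u)$ for $b\in A_+$, already used in the proof of Proposition \ref{prop-u-analog-Sinnott}.

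Divide by $\Phi^{\C}_1(u)=u$ to get
$$[b]_u=\frac{\C_b(u)}{u}=\prod_{\substack{a\in A_+,\ a\mid b\\ a\neq 1}}\Phi^{\C}_a(u).$$
By Definition \ref{df-u-bracket}, $[b]_0=b$, because the linear coefficient $[b,0]$ of $\C_b(X)$ equals $b$. Evaluating at $u=0$ therefore gives, for every $b\in A_+$,
$$b=\prod_{\substack{a\in A_+,\ a\mid b\\ a\neq 1}}\Phi^{\C}_a(0).$$
Set $f(a)=\Phi^{\C}_a(0)$ for $a\neq 1$ and $f(1)=1$. The identity becomes $b=\prod_{a\mid b}f(a)$ for all monic $b$. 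This is a multiplicative convolution identity on the divisor lattice of monic polynomials, so multiplicative Möbius inversion determines $f$ uniquely:
$$f(a)=\prod_{b\mid a}b^{\mu(a/b)},$$
where $\mu$ is the Möbius function on $A_+$. One can also avoid Möbius inversion and argue by strong induction on $\deg a$: the value $f(a)$ is forced by $f(a)=a\big/\prod_{b\mid a,\,b\neq a}f(b)$.

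It then suffices to check that the candidate $g$ defined by $g(v^e)=v$ for monic irreducible $v$ and $e\ge1$, with $g(a)=1$ otherwise, satisfies $\prod_{a\mid b}g(a)=b$. Write $b=\prod_i v_i^{e_i}$. The divisors of $b$ of the form $v_i^{k}$ with $1\le k\le e_i$ contribute $\prod_i v_i^{e_i}=b$, and every other divisor contributes $1$. Uniqueness then gives $f=g$, which is exactly the claimed formula.

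The main care point is the evaluation at $u=0$. We need $\Phi^{\C}_a(u)$ to be a genuine polynomial, which it is, and the identity above is a polynomial identity in $u$, so specializing to $u=0$ is legitimate. We also need $\Phi^{\C}_a(0)\neq 0$ for $a\neq 1$ in order to divide during the induction. This holds because the roots of $\Phi^{\C}_a$ are nonzero torsion points when $a\neq1$; alternatively, the product over divisors equals $b\neq0$, so no factor can vanish.
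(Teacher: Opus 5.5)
Your proposal is correct and takes essentially the paper's route: you specialize the identity $[b]_u=\prod_{a\mid b,\,a\neq 1}\Phi^{\C}_a(u)$ at $u=0$ and then invert the resulting divisor-product identity $b=\prod_{a\mid b}f(a)$. Your ``verify the candidate, then invoke uniqueness'' packaging is a tidier version of the paper's argument, which computes the prime-power case directly from the M\"obius formula and handles the remaining case by induction on $\deg a$.
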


\begin{proof}
    Note that 
    $$
    [a]_u=\frac{\C_a(u)}{u} = u^{-1}\prod_{b \mid a} \Phi^{\C}_a(u)=\prod_{\substack{b\mid a\\b\neq 1}}\Phi^{\C}_b(u).
    $$ 
    Evaluating both sides at $u=0$, we obtain 
    \[
    a = \prod_{\substack{b \mid a \\ b \neq 1}} \Phi^{\C}_b(0)=\prod_{b \mid a} f(b),
    \]
    where $f:A_+\to A$ is defined by $f(1) = 1$ and $f(b) = \Phi^{\C}_b(0)$ for $b \neq 1$.

    By the Möbius inversion formula on the divisibility poset of $A_+$ (see \cite[Section 3.7]{Stanley2011}), we can express $f(a)$ as
    $$f(a) = \prod_{b \mid a} b^{\mu(a/b)},$$
    where $\mu$ is the Möbius function on $A_+$ defined by $\mu(1) = 1$, $\mu(a) = (-1)^k$ if $a$ is a product of $k$ distinct monic irreducible polynomials, and $\mu(a) = 0$ otherwise.
     
    We now determine the values of $f(a)$ by considering the following cases.
    
    \textbf{Case 1: $a = v^e$ for some monic irreducible polynomial $v\in A_+$.} \\
    Any divisors of $a$ are of the form $v^k$ for $0 \le k \le e$ and $\mu(v^e / v^k) = \mu(v^{e-k})$ is non-zero only when $e-k = 0$ or $1$. Thus, we obtain
    \[
    f(v^e) = (v^e)^{\mu(1)} \cdot (v^{e-1})^{\mu(v)} = (v^e)^1 \cdot (v^{e-1})^{-1} = v.
    \]
    Hence, $\Phi^{\C}_{v^e}(0) = v$.

    \textbf{Case 2: $a$ has at least two distinct monic irreducible factors.} \\
We proceed by induction on $\deg a$. The base case is clear by Case 1. Suppose $\Phi^{\C}_b(0) = 1$ for all $b\in A_+$ with $\deg b<\deg a$. Let $a=v_1^{e_1}v_2^{e_2}\cdots v_k^{e_k}\in A_+$ where $v_i$ are distinct monic irreducible polynomials, $e_i\in \NN$, and $k\geq 2$. Then we have 
\[
a = \Phi^{\C}_a(0) \cdot \left( \prod_{i=1}^k \prod_{j=1}^{e_i} \Phi^{\C}_{v_i^j}(0) \right) \cdot \left( \prod_{\substack{b \mid a, \ b \neq a \\ b \text{ has } \ge 2 \text{ irreducible factors}}} \Phi^{\C}_b(0) \right).
\]
By Case 1 and induction hypothesis, we have
\[
a=\Phi^{\C}_a(0)\cdot\prod_{i=1}^k \prod_{j=1}^{e_i} v_i\cdot 1=\Phi_a^{\C}(0)\cdot\prod_{i=1}^k v_i^{e_i}=\Phi_a^{\C}(0)\cdot a.
\]
This forces $\Phi_a^{\C}(0)=1$, which completes the proof.
\end{proof}

Hence, as $u \to 0$, Lemma \ref{lem-cyclotomic-zero} recovers Sinnott's identity \eqref{eq-Sinnott's identity}.
\subsection{Estimates for the \texorpdfstring{$u$}{u}-Bracket}\label{section:2.2}
In this subsection, we characterize the domain of $u$ for which $[a]_u$ diverges as $\deg a \to \infty$. These estimates will be essential for the subsequent analysis.
\begin{df}
    Define $\mathfrak{D}$ to be the subset 
    $$\Dfk=\CC_\infty\setminus\{u\in\CC_\infty:|u|_\infty=r^{k+1/(r-1)}, k\in\ZZ_{\leq 0}\}.$$
\end{df}
\begin{rmk}
    We remark that $\mathfrak{D}$ contains the following regions:
    \begin{enumerate}
        \item $u = 0$.
        \item $\{ u \in \CC_\infty : |u| > r^{1/(r-1)} \}$.
    \end{enumerate}
    Moreover, $\mathfrak{D}$ avoids all non-zero Carlitz torsion points, as the absolute value of any such point is of the form $r^{k + \frac{1}{r-1}}$ for some $k \in \ZZ_{\leq 0}$ (see \cite[Proposition 12.13]{Rosen2002}).
\end{rmk}

We shall prove that for $u \in \mathfrak{D}$, $|[a]_u|_\infty \to \infty$ as $\deg a \to \infty$ by the following estimates on the terms $[a,i]u^{r^i}$ of Carlitz polynomials $\C_a(u)$. 
\begin{lem}\label{lem-i_0(d)}
    For $0\neq u\in\mathfrak{D}$ and $a\in A_{d}$, there exists a unique $0\leq i_0:=i_0(d)\leq d$ such that 
    \[
        |[a,i]u^{r^i}|_\infty < |[a,i_0]u^{r^{i_0}}|_\infty
    \]
    for all $0\leq i\leq d$ with $i\neq i_0$. Moreover, we have the following properties:
    \begin{enumerate}
        \item $i_0$ depends only on $d$ for any fixed $u$.
        \item $i_0(d)\to\infty$ as $d\to\infty$.
        \item $d-i_0(d)=\kappa_u$  for all $d$ large enough, where $\kappa_u$ is a non-negative integer depending only on $u$.
    \end{enumerate}
\end{lem}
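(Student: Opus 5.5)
We need the absolute values of the Carlitz coefficients $[a,i]$ for $a\in A_d$. The plan is to use the standard formula
$$[a,i]=\sum \text{(coefficient)}\cdot\frac{\text{stuff}}{D_i L_{d-i}^{r^i}}\quad\text{or more simply}\quad |[a,i]|_\infty = r^{r^i(d-i)}.$$
The cleanest route is by induction on $d$, using $\C_{\theta a}=(\theta+\tau)\C_a$. This gives the recursion $[\theta a,i]=\theta[a,i]+[a,i-1]^r$, with $[a,0]=a$ and $[a,d]$ equal to the leading coefficient of $a$. For $a$ monic of degree $d$ one checks inductively that $|[a,i]|_\infty=r^{r^i(d-i)}$: in the recursion, $|\theta[a,i]|=r^{r^i(d-i)+1}$ and $|[a,i-1]^r|=r^{r^i(d-i+1)}$, and the second term dominates strictly whenever $i\ge1$, $i\le d$. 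Adding lower-degree terms and scaling by $\FF_r^\times$ does not change this, because for $i\le d$ the degree-$d$ part dominates each $[\cdot,i]$ (the coefficients for lower degree $d'<d$ have absolute value $r^{r^i(d'-i)}$, which is smaller). Hence for every $a\in A_d$ and $0\le i\le d$,
$$\log_r|[a,i]u^{r^i}|_\infty = r^i(d-i+\rho),\qquad \rho:=\log_r|u|_\infty .$$
This depends only on $d$ (and $u$), which already yields property (1).

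Next I would analyze $g(i)=r^i(d-i+\rho)$ for integers $0\le i\le d$. The ratio of consecutive terms satisfies
$$g(i+1)-g(i)=r^i\big((r-1)(d-i+\rho)-r\big).$$
This is positive iff $d-i+\rho>r/(r-1)$, i.e. iff $d-i-1+\rho>1/(r-1)$. So $g$ increases while $i<d+\rho-r/(r-1)$ and decreases afterwards. Ties occur only if $g(i+1)=g(i)$, i.e. $\rho=i+1-d+1/(r-1)$, which would give $|u|=r^{k+1/(r-1)}$ with $k=i+1-d\in\ZZ_{\le0}$. Such $u$ are excluded since $u\in\Dfk$. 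Thus the maximum is attained at a unique $i_0$, namely
$$i_0=\min\Big(d,\ \max\big(0,\lceil d+\rho-r/(r-1)\rceil\big)\Big),$$
or equivalently the smallest $i$ with $d-i+\rho<r/(r-1)$, capped at $d$. Strictness of the other inequalities follows from the strict unimodality.

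Properties (2) and (3) then follow from this formula. If $\rho\ge 1/(r-1)$, then $d+\rho-r/(r-1)\ge d-1$, so $i_0=d$ and $\kappa_u=0$. Otherwise, for $d$ large, $i_0=d-\kappa_u$ with $\kappa_u=\max(0,\lfloor r/(r-1)-\rho\rfloor)$, adjusted by the ceiling as appropriate. This is a nonnegative integer independent of $d$, and then $i_0\to\infty$.

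The main obstacle is the coefficient estimate $|[a,i]|=r^{r^i(d-i)}$ for arbitrary (non-monic) $a\in A_d$. The delicate part is the bookkeeping in the strict-domination step and in the contribution of lower-degree terms, so that the equality holds exactly rather than as an inequality. I would handle it via the recursion above together with $\FF_r$-linearity, $[a+b,i]=[a,i]+[b,i]$.
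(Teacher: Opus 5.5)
Your proposal is correct and is essentially the paper's proof. The paper simply cites Rosen for $|[a,i]|_\infty=r^{r^i(d-i)}$, which you instead derive from $[\theta a,i]=\theta[a,i]+[a,i-1]^r$ and $\FF_r$-linearity, and it locates the maximum of $r^x(d-x+\eta)$ by calculus before comparing the two neighbouring integers, whereas your forward difference $g(i+1)-g(i)=r^i\bigl((r-1)(d-i+\rho)-r\bigr)$ gives unimodality, uniqueness and the closed form for $i_0$ at once. One small fix: in the case $\rho\ge 1/(r-1)$, note that $\rho=1/(r-1)$ is excluded by $\Dfk$ (it is the circle with $k=0$), so $d+\rho-r/(r-1)>d-1$ holds strictly, and it is this strict inequality that forces $i_0=d$.
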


\begin{proof}
    Since $u\neq 0$, we may write $|u|_\infty=r^{\eta}$ for some $\eta \in \mathbb{R}$. For $a\in A$ with $\deg a=d$ and each $0\le i\le d$, we have (see \cite[Proposition 12.11]{Rosen2002})
    \[
        |[a,i]|_\infty = r^{r^i(d-i)}.
    \]
    Hence,
    \[
        |[a,i]u^{r^i}|_\infty 
        = r^{r^i(d-i)} r^{r^i\eta}
        = r^{r^i(d-i+\eta)} .
    \]
    Consider the real-valued function
    \[
        R_d(x) = r^{x}(d-x+\eta),\qquad x\in\mathbb{R}.
    \]
    Standard calculus shows that $R_d$ attains its maximum at
    \[
        x_* = d+\eta - \frac{1}{\ln r},
    \]
    and that $R_d(x)$ is strictly increasing on $[0,x_*]$ and strictly decreasing on $[x_*,\infty)$.  

    Let $n=\lfloor x_* \rfloor$. Then the maximum among $\{R_d(i)\}_{0\le i\le d}$ is determined as follows:

    \textbf{Case 1: $n> d-1$.} The maximum is attained at $i=d$, so
    \[
        \max_{0\le i\le d} R_d(i) = R_d(d),
    \]
    and the maximum is unique.

    \textbf{Case 2: $n< 0$.} The maximum is attained at $i=0$, so
    \[
        \max_{0\le i\le d} R_d(i) = R_d(0),
    \]
    and the maximum is unique.
    
    \textbf{Case 3: $0\leq n\le d-1$.} The maximum is 
    \[
        \max_{0\le i\le d} R_d(i) = \max\{R_d(n), R_d(n+1)\}.
    \]
    We now determine when $R_d(n)=R_d(n+1)$.  
    We compute that
    \begin{align*}
        R_d(n+1)-R_d(n)=0 &\iff
        r^{n}\bigl((r-1)(d-n+\eta)-r\bigr)=0 \\
        &\iff
        d-n+\eta = \frac{r}{r-1}.
    \end{align*}
    Since $n\leq d-1$, $d-n$ is a positive integer. Thus, this equality fails whenever 
    $\eta\notin \mathbb{Z}_{\le 0}+\frac{1}{r-1}$.  
    Consequently, under the assumption that $u \in \mathfrak{D}$, the set $\{R_d(i)\}_{0\le i\le d}$ has a unique maximum.

    We now analyze this more precisely by considering the value of $\eta$:

    \textbf{Case A: $\eta>\frac{1}{r-1}$.} Note that $$0<\frac{1}{\ln r}-\frac{1}{r-1}<1.$$ One verifies that 
    \[
        n=\lfloor d+\eta - \tfrac{1}{\ln r}\rfloor \ge d-1.
    \]
    If $n=d-1$, then $d-n+\eta = 1+\eta > 1+\frac{1}{r-1} = \frac{r}{r-1}$, so 
    $R_d(n+1)>R_d(n)$ and the unique maximum is $R_d(d)$.  
    If $n>d-1$, the maximum is precisely $R_d(d)$ as shown in Case 1.  
    Therefore,
    \[
        \max_{0\le i\le d} R_d(i)=R_d(d),
    \]
    and hence $i_0(d)=d$ in this case.

    \textbf{Case B: $\eta<\frac{1}{r-1}$ and $\eta\notin \mathbb{Z}_{\le 0}+\frac{1}{r-1}$.} One has 
    \[
        n=\lfloor d+\eta - \tfrac{1}{\ln r}\rfloor \le d-1.
    \]
    We remark that for $d$ large enough, $n\geq 0$. By Case 3, the unique maximum lies in $\{R_d(n),R_d(n+1)\}$. Note that
    \begin{align*}
        R_d(n+1)-R_d(n)>0 &
        \iff
        d-n+\eta > \frac{r}{r-1}.
    \end{align*}
    We compute
    \[
        d-n+\eta=d-\lfloor d+\eta-\tfrac{1}{\ln r}\rfloor+\eta
        =\eta-\lfloor \eta-\tfrac{1}{\ln r}\rfloor,
    \]
    so the choice between $n$ and $n+1$ depends only on $\eta$.  
    Consequently, either
    \[
        i_0(d)=\lfloor d+\eta-\tfrac{1}{\ln r}\rfloor
        = d+\lfloor \eta-\tfrac{1}{\ln r}\rfloor,
    \]
    or
    \[
        i_0(d)=\lfloor d+\eta-\tfrac{1}{\ln r}\rfloor+1
        = d+1+\lfloor \eta-\tfrac{1}{\ln r}\rfloor.
    \]
The three desired properties follow immediately from the above analysis, completing the proof.
\end{proof}

\begin{cor}\label{cor-[a]u-to-infty}
    For $u\in\mathfrak{D}$, we have
    \[
        |[a]_u|_\infty \to\infty
    \]
    as $\deg a\to\infty$.
\end{cor}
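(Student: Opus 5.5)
The plan is to use the dominant term from Lemma \ref{lem-i_0(d)} and the ultrametric inequality. First we dispose of the case $u=0$. Here $[a]_u=a$ by Definition \ref{df-u-bracket}, so $|[a]_u|_\infty=r^{\deg a}\to\infty$ as $\deg a\to\infty$.

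Now take $0\neq u\in\Dfk$ and put $|u|_\infty=r^\eta$. For $a\in A_d$, Lemma \ref{lem-i_0(d)} says that among the terms $[a,i]u^{r^i}$ with $0\le i\le d$ there is a unique one of strictly largest absolute value, namely the one with $i=i_0(d)$. Since $|\cdot|_\infty$ is non-archimedean and the maximum is strict, we get
\[
|\C_a(u)|_\infty=\bigl|[a,i_0]u^{r^{i_0}}\bigr|_\infty=r^{r^{i_0}(d-i_0+\eta)},
\]
using $|[a,i]|_\infty=r^{r^i(d-i)}$ as in the proof of Lemma \ref{lem-i_0(d)}. Dividing by $u$ gives
\[
|[a]_u|_\infty=r^{\,r^{i_0}(d-i_0+\eta)-\eta}.
\]

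It remains to show that the exponent $r^{i_0(d)}\bigl(d-i_0(d)+\eta\bigr)-\eta$ tends to $\infty$. By property (3) of Lemma \ref{lem-i_0(d)}, for $d$ large we have $d-i_0(d)=\kappa_u$, so the exponent equals $r^{i_0(d)}(\kappa_u+\eta)-\eta$. By property (2), $i_0(d)\to\infty$. So everything reduces to checking that $\kappa_u+\eta>0$, and this is the step I expect to need the most care.

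To prove $\kappa_u+\eta>0$, I would compare the maximal term with the $i=0$ term. Maximality gives $R_d(i_0)\ge R_d(0)=d+\eta$, where $R_d(x)=r^x(d-x+\eta)$ as in the proof of Lemma \ref{lem-i_0(d)}. Hence $r^{i_0}(\kappa_u+\eta)\ge d+\eta$, and the right-hand side is positive for large $d$, which forces $\kappa_u+\eta>0$. This argument also yields the direct bound $\log_r|[a]_u|_\infty\ge d+\eta-\eta=d$, so $|[a]_u|_\infty\ge r^{d}\to\infty$. In fact this shortcut avoids properties (2) and (3) entirely, since $|[a]_u|_\infty=|\C_a(u)|_\infty/|u|_\infty\ge |a u|_\infty/|u|_\infty$ holds directly by maximality.
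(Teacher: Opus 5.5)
Your proof is correct and is essentially the paper's argument. Both use the strict dominant term from Lemma \ref{lem-i_0(d)} with the ultrametric equality and then bound $R_d(i_0)$ below by a fixed term; the paper compares with $i=1$ to get $|[a]_u|_\infty\ge r^{r(d-1+\eta)-\eta}$, while your comparison with $i=0$ gives the cleaner, $u$-independent bound $|[a]_u|_\infty\ge|a|_\infty=r^{d}$, so the detour through properties (2), (3) and $\kappa_u+\eta>0$ is valid but unnecessary, as you note.
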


\begin{proof}
    For $u=0$, the result is clear. Suppose $u\neq 0$. Write $|u|_\infty=r^{\eta}$ for some $\eta\in\RR$. Let $a\in A$ with $d=\deg a\geq 1$. By Lemma \ref{lem-i_0(d)} and the strong triangle inequality, we have
    \[
        |\C_a(u)|_\infty
        = \left|\sum_{i=0}^d [a,i]u^{r^i}\right|_\infty
        = r^{R_{d}(i_0)}\geq r^{R_d(1)}=r^{\,r(d-1+\eta)}.
    \]
    Hence,
    \[
        |[a]_u|_\infty
        = \left|\frac{\C_a(u)}{u}\right|_\infty
        \ge r^{\,r(d-1+\eta)-\eta}
        \to \infty
    \]
    as $d\to\infty$.
\end{proof}
\section{Finite Multiple Harmonic \texorpdfstring{$u$}{u}-Series}\label{section:3}
In this section, we study the finite multiple harmonic $u$-series, which is viewed as a $u$-analog of the truncation of Thakur's multiple zeta values. Specifically, we will first show that they satisfy the $r$-shuffle relations and possess an Euler-Carlitz-type formula. Furthermore, we examine both the ``algebraic'' and ``analytic'' limits of the finite multiple harmonic $u$-series evaluated at Carlitz torsion points, in view of the work of Bachmann, Takeyama, and Tasaka \cite{BTT18}.
\subsection{Finite Multiple Harmonic \texorpdfstring{$u$}{u}-Series}\label{section:3.1}
First, we recall the notations
$$\Ical^{\ext}=\bigcup_{m=1}^\infty \ZZ^m \cup \{\varnothing\}\quad\text{and}\quad\Ical = \bigcup_{m=1}^\infty \NN^m \cup \{\varnothing\}.$$
Also, by convention, an empty sum is defined to be zero and an empty product to be one.

We now introduce $u$-analogs of Thakur's power sums and finite multiple harmonic series.
\begin{df}
    For $s,d\in\ZZ$, we define the
    \textit{$u$-power sum} by
    $$H_d(s;u):=\sum_{a\in A_{+,d}}\frac{1}{[a]_u^s}\in\CC_\infty\mbb{u}.$$
    For any non-empty index $\ww{s}=(s_1,\ldots,s_m)\in\Ical^{\ext}$ and $d\in\ZZ$, we define the
    \textit{$u$-multiple power sum} by
    \begin{align*}
        H_d(\ww{s};u)&:=\sum_{d=d_1>\cdots>d_m\geq 0}H_{d_1}(s_1;u)\cdots H_{d_m}(s_m;u)\\
        &=\sum_{\substack{a_1,\ldots,a_m\in A_+\\d=\deg a_1>\ldots>\deg a_m\geq 0}} \frac{1}{[a_1]_u^{s_1}\cdots[a_m]_u^{s_m}}\in\CC_\infty\mbb{u}.
    \end{align*}
    We conventionally put $H_d(\varnothing;u)=\delta_{d,0}$ where $\delta_{i,j}$ denotes the Kronecker delta. Evaluating $H_{d}(\ww{s};u)$ at any $u\in\CC_\infty$ such that $\C_a(u)\neq 0$ for all $a\in A_{+,\leq d}$ yields a well-defined element in $\CC_\infty$. 
\end{df}

\begin{df}\label{df:fmhus}
    For any non-empty index $\ww{s}=(s_1,\ldots,s_m)\in\Ical^{\ext}$ and $d\in\ZZ$, we define the \textit{finite multiple harmonic $u$-series} by
    \begin{align*}
        H_{<d}(\ww{s};u)&:=\sum_{d>d_1>\cdots>d_m\geq 0}H_{d_1}(s_1;u)\cdots H_{d_m}(s_m;u)\\&=
        \sum_{\substack{a_1,\ldots,a_m\in A_+\\d>\deg a_1>\cdots>\deg a_m\geq 0}}\frac{1}{[a_1]_{u}^{s_1}\cdots [a_m]_{u}^{s_m}}\in\CC_\infty\mbb{u}.
    \end{align*}
    We conventionally put $H_{<d}(\varnothing;u)=1$. Evaluating $H_{<d}(\ww{s};u)$ at any $u\in\CC_\infty$ such that $\C_a(u)\neq 0$ for all $a\in A_{+,< d}$ yields a well-defined element in $\CC_\infty$.
\end{df}

Note that whenever $\dep(\ww{s}) > d+1$, $H_{d}(\ww{s};u)$ is an empty sum and thus equals to zero. Similarly, $H_{<d}(\ww{s};u)=0$ whenever $\dep(\ww{s}) > d$.

\begin{rmk}One observes that as $u \to 0$, the $u$-multiple power sums and finite multiple harmonic $u$-series reduce to Thakur's multiple power sums and finite multiple harmonic series, respectively. That is, for any index $\ww{s} = (s_1, \dots, s_m) \in \Ical^{\ext}$,$$\lim_{u \to 0} H_d(\ww{s}; u) = S_d(\ww{s}) = \sum_{\substack{a_1, \ldots, a_m \in A_+ \\ d = \deg a_1 > \cdots > \deg a_m \ge 0}} \frac{1}{a_1^{s_1} \cdots a_m^{s_m}}$$and
\[
    \lim_{u \to 0} H_{<d}(\ww{s}; u) = S_{<d}(\ww{s}) := \sum_{\substack{a_1, \ldots, a_m \in A_+ \\ d > \deg a_1 > \cdots > \deg a_m \ge 0}} \frac{1}{a_1^{s_1} \cdots a_m^{s_m}}.
\]
Hence, we have
$\lim_{d\to\infty}\lim_{u\to 0}H_{<d}(\ww{s};u)=\zeta_A(\ww{s}).$
\end{rmk}

Parallel to the classical theory, we introduce a $u$-analog of Thakur's multiple zeta values.
\begin{df}\label{df:uMZV}
    For any non-empty index $\ww{s}=(s_1,\ldots,s_m)\in\Ical^{\ext}$ and $u\in\CC_\infty$ with $\C_a(u)\neq 0$ for all $a\in A_+$, we define the \textit{$u$-multiple zeta value} by
    $$
    \zeta_u(\ww{s}):=\lim_{d\to\infty}H_{<d}(\ww{s};u)
    $$
    whenever the series converges in $\CC_\infty$.
\end{df}
\begin{rmk}
    Corollary \ref{cor-[a]u-to-infty} implies that for $u\in\Dfk$, the series $\zeta_u(\ww{s})$ converges for any positive index $\ww{s}\in\Ical$. For non-positive indices, note that for each fixed $s\leq 0$, the $u$-power sum $H_d(s;u)$ vanishes for all sufficiently large $d$ (see \cite[Theorem 5.1.2]{Thakur2004}). Then Corollary \ref{cor-[a]u-to-infty} together with this observation guarantee the convergence of $\zeta_u(\ww{s})$.
\end{rmk}
\begin{rmk}
    In general, $\zeta_u(\ww{s})$ may not converge to $\zeta_A(\ww{s})$ as $u\to 0$. However, one can check by using similar arguments in Theorem \ref{thm:limit of u} that there exists a sequence $(u_n)_{n\geq 1}$ tending to $0$ such that $\lim_{n\to\infty}\zeta_{u_n}(\ww{s})=\zeta_A(s)$ for any index $\ww{s}\in\Ical^{\ext}$.
\end{rmk}

\subsection{\texorpdfstring{$r$}{r}-Shuffle Relations}\label{section:3.2}

In this subsection, we aim to show that both finite multiple harmonic $u$-series and $u$-multiple zeta values at positive indices satisfy the $r$-shuffle relations. To this end, we first establish the result in a more formal and general framework by considering the following setting: 

Let $\mathbb{K}$ be an integral domain containing $\FF_r$ and fix $d_0\in\NN$. For each \(a\in A_{<d_0}\), we assign an element \([a]\in \mathbb{K}\).
We assume the bracket satisfying the following properties:
\begin{enumerate}
    \item For all $a\in A_{+,<d_0}$, $[a]\in\KK^\times$.
    \item For all \(a,b\in A_{<d_0}\),
    $$[a+b]=[a]+[b].$$
    \item For all $a\in A_{<d_0}$ and $\varepsilon\in\FF_r$,
    $$[\varepsilon a]=\varepsilon [a].$$
\end{enumerate}
In particular, one sees that for $u\in\CC_\infty$ with $\C_a(u)\neq 0$ for all $a\in A_{+,<d_0}$, the $u$-bracket $[a]_u$ satisfies all the properties.

Next, for any index $\ww{s}=(s_1,\ldots,s_m)\in\Ical$, we consider the formal version of multiple power sum
\begin{align}\label{eq:H_d def}
    \H_{d}(\ww{s}) = \sum_{\substack{a_1,\ldots,a_m\in A_+ \\ d=\deg a_1 > \cdots > \deg a_m \geq 0}} \frac{1}{[a_1]^{s_1}\cdots[a_m]^{s_m}} \in \KK,\quad 0\leq d<d_0,
\end{align}
and the corresponding finite multiple harmonic series
\begin{align}\label{eq:H_<d def}
        \H_{<d}(\ww{s})&:=\sum_{\substack{a_1,\ldots,a_m\in A_+\\d>\deg a_1>\cdots>\deg a_m\geq 0}}\frac{1}{[a_1]^{s_1}\cdots [a_m]^{s_m}}\in\KK,\quad 1\leq d\leq d_0.
    \end{align}
By convention, we put $\H_{<0}(\ww{s})=0$ for all non-empty $\ww{s}\in\Ical$ and $\H_{<d}(\varnothing)=1$ for all $0\leq d\leq d_0$.
We let
$$\widehat{\H_{<d}}:\Rcal\to\KK$$
be the unique $\FF_p$-linear map such that
$$1=x_{\varnothing}\mapsto 1,\quad x_{\ww{s}}\mapsto \H_{<d}(\ww{s}),$$
and recall the notation $\ww{s}^- \in \Ical$, denoting  the index obtained by removing the first entry of $\ww{s}\in\Ical$.
Then we show that $\H_{<d_0}(\ww{s})$ satisfies the $r$-shuffle relations.
For this purpose, we establish several preliminary lemmas:
\begin{lem}\label{lem:H<d=sumHd}
    Let $0\leq d<d_0$ and $\ww{s}=(s_1,\ldots,s_m)\in\Ical$ be a non-empty index. Then we have
    $$\H_{<d}(\ww{s})=\sum_{0\leq d_1<d}\H_{d_1}(\ww{s}).$$
\end{lem}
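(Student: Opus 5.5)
This is essentially a regrouping of a finite sum according to the degree of the first (leading) polynomial, so I would argue directly from the definitions \eqref{eq:H_d def} and \eqref{eq:H_<d def}.

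\textbf{Trivial case.} If $d=0$, the left-hand side is $\H_{<0}(\ww{s})=0$ by convention. The right-hand side is an empty sum, hence also zero.

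\textbf{Main case.} Assume $1\le d<d_0$, so $\H_{<d}(\ww{s})$ is given by \eqref{eq:H_<d def}. All degrees that occur are then $<d_0$. Hence every $[a_j]$ in the sum is a unit of $\KK$ by assumption (1), and each summand is a well-defined element of $\KK$. The index set
\[
\{(a_1,\ldots,a_m)\in A_+^m : d>\deg a_1>\cdots>\deg a_m\ge 0\}
\]
is finite. It is the disjoint union, over $d_1$ with $0\le d_1<d$, of the sets
\[
\{(a_1,\ldots,a_m)\in A_+^m : \deg a_1=d_1>\deg a_2>\cdots>\deg a_m\ge 0\}.
\]
Splitting the finite sum along this partition gives
\[
\H_{<d}(\ww{s})=\sum_{0\le d_1<d}\,\sum_{\substack{a_1,\ldots,a_m\in A_+\\ d_1=\deg a_1>\cdots>\deg a_m\ge 0}}\frac{1}{[a_1]^{s_1}\cdots[a_m]^{s_m}}.
\]
Since $d_1<d<d_0$, each inner sum is exactly $\H_{d_1}(\ww{s})$ as defined in \eqref{eq:H_d def}, which proves the identity. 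When $d_1<m-1$ the inner index set is empty, so $\H_{d_1}(\ww{s})=0$, consistent with the empty-sum convention.

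\textbf{Main obstacle.} There is no substantive difficulty. The only care needed is with the boundary conventions: the case $d=0$, and checking that $d<d_0$ keeps every degree in the range where the brackets are defined and invertible.
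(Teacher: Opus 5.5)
Your proof is correct. Splitting the finite sum defining $\H_{<d}(\ww{s})$ according to $\deg a_1$, and disposing of $d=0$ via the convention $\H_{<0}(\ww{s})=0$, is exactly what is needed. The paper states this lemma without proof, treating it as immediate from the definitions of $\H_{d}$ and $\H_{<d}$, so your proposal just writes out that implicit regrouping, including the check that all degrees stay below $d_0$.
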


\begin{lem}\label{lem:d=d product <d}
     Let $0\leq d<d_0$ and $\ww{s}=(s_1,\ldots,s_m)\in\Ical$ be a non-empty index.  Then we have
    $$\H_{d}(\ww{s})=\H_{d}(s_1)\H_{<d}(\ww{s}^-).$$
\end{lem}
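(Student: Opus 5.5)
This is a direct unfolding of the definition \eqref{eq:H_d def}. In the sum defining $\H_{d}(\ww{s})$, the summation conditions are $a_1\in A_{+,d}$ and $(a_2,\ldots,a_m)\in A_+^{m-1}$ with $d>\deg a_2>\cdots>\deg a_m\geq 0$. The key point is that the constraint linking $a_1$ to the rest is only $\deg a_1=d>\deg a_2$, and this depends on $d$ alone, not on $a_1$ itself. So the index set is a Cartesian product, and the summand factors as
$$\frac{1}{[a_1]^{s_1}}\cdot\frac{1}{[a_2]^{s_2}\cdots[a_m]^{s_m}}.$$

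I would first treat $m\geq 2$. By distributivity in $\KK$, the sum splits as
$$\Bigl(\sum_{a_1\in A_{+,d}}[a_1]^{-s_1}\Bigr)\Bigl(\sum_{\substack{a_2,\ldots,a_m\in A_+\\ d>\deg a_2>\cdots>\deg a_m\geq 0}}[a_2]^{-s_2}\cdots[a_m]^{-s_m}\Bigr).$$
The first factor is $\H_d(s_1)$ by \eqref{eq:H_d def} in depth one. The second is $\H_{<d}(\ww{s}^-)$ by \eqref{eq:H_<d def}. This requires $1\leq d\leq d_0$, and that holds here, since if $d=0$ both sides vanish: the left side is an empty sum because $\deg a_2<0$ is impossible, and the right side vanishes by the convention $\H_{<0}(\ww{s}^-)=0$ for the non-empty index $\ww{s}^-$. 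All brackets involved have degree $\le d<d_0$, so they are invertible by property (1), and every term is well defined in $\KK$.

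Next I would handle the depth-one case $m=1$. Here $\ww{s}^-=\varnothing$ and $\H_{<d}(\varnothing)=1$ for $0\le d\le d_0$, so the identity reads $\H_d(s_1)=\H_d(s_1)$, which is trivial.

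There is no real obstacle. The only care needed is to check the boundary conventions ($d=0$ and the empty index) against the stated conventions, which I have done above.
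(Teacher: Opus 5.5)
Your proof is correct and is the argument the paper intends: the paper states this lemma without proof, treating it as immediate from the defining sums for $\H_d$ and $\H_{<d}$. Your observation that the summation set factors as $A_{+,d}$ times the set of tuples $(a_2,\ldots,a_m)$ with $d>\deg a_2>\cdots>\deg a_m\geq 0$ is that unfolding, and you correctly check the boundary cases $d=0$ and $m=1$ against the stated conventions.
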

In the following proofs of Lemma \ref{lem:shuffle base} and Theorem \ref{thm:abstract-r-shuffle}, we closely follow the arguments of Shi \cite[Theorem 3.1.4]{Shi2018} for $S_{<d}(\ww{s})$ (see also \cite{Thakur2010}).
\begin{lem}\label{lem:shuffle base}
     Let $0\leq d< d_0$ and $r_1,s_1\in\NN$. Then we have
    $$\H_{d}(r_1)\H_{d}(s_1)=\H_{d}(r_1+s_1)+\sum_{\substack{i+j=r_1+s_1}}\Delta_{r_1,s_1}^{i,j} \H_{d}(i,j)$$
    where
    $\Delta_{r_1,s_1}^{i,j}$ is defined in \eqref{eq:df_Delta}.
\end{lem}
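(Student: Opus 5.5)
The plan is to follow Chen's and Shi's argument for $S_d$, using only the $\FF_r$-linearity properties (2), (3) of the abstract bracket. First expand the product as a double sum over $a,b\in A_{+,d}$. The diagonal $a=b$ contributes exactly $\H_d(r_1+s_1)$. For $a\neq b$, write $b=a+\epsilon$ with $\epsilon\in A_{<d}$, $\epsilon\neq 0$, and further write $\epsilon=\alpha c$ with $\alpha\in\FF_r^\times$ and $c\in A_{+,<d}$. Additivity gives $[b]=[a]+\alpha[c]$. So the off-diagonal part equals
$$\sum_{c\in A_{+,<d}}\sum_{\alpha\in\FF_r^\times}\sum_{a\in A_{+,d}}\frac{1}{[a]^{r_1}([a]+\alpha[c])^{s_1}}.$$

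The key algebraic step is the partial fraction identity, valid in any field of characteristic $p$ for $x,y$ with $x,x+y\neq 0$ and $y\neq 0$:
$$\frac{1}{x^{r_1}(x+y)^{s_1}}=\sum_{i+j=r_1+s_1}\left[(-1)^{r_1}\binom{j-1}{r_1-1}\frac{1}{(x+y)^{i}y^{j}}+(-1)^{j-s_1}\binom{j-1}{s_1-1}\frac{1}{x^{i}y^{j}}\right].$$
The exact signs must be checked by induction on $r_1+s_1$ starting from $\frac1{x(x+y)}=\frac1y(\frac1x-\frac1{x+y})$.

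Apply the identity with $x=[a]$ and $y=\alpha[c]$, using $[\alpha c]=\alpha[c]$. The factor $y^{-j}=\alpha^{-j}[c]^{-j}$ is then summed over $\alpha\in\FF_r^\times$. Since $\sum_{\alpha\in\FF_r^\times}\alpha^{-j}=-1$ if $(r-1)\mid j$ and $0$ otherwise, only $j$ divisible by $r-1$ survive. As $j\geq1$, this produces the condition $(r-1)\mid j$, $j>0$ in $\Delta^{i,j}_{r_1,s_1}$.

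It remains to see that $\frac1{[a]^i}$ and $\frac1{([a]+\alpha[c])^i}=\frac{1}{[a+\alpha c]^i}$ each sum over $a\in A_{+,d}$ to $\H_d(i)$. This holds because $a\mapsto a+\alpha c$ permutes $A_{+,d}$, since $\deg c<d$. The remaining sum $\sum_{c\in A_{+,<d}}[c]^{-j}=\H_{<d}(j)$, multiplied by $\H_d(i)$, gives $\H_d(i,j)$ by Lemma \ref{lem:d=d product <d}.

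Combining the two terms yields the coefficient $-\bigl((-1)^{r_1}\binom{j-1}{r_1-1}+(-1)^{j-s_1}\binom{j-1}{s_1-1}\bigr)$. Since $(r-1)\mid j$, either $r$ is odd and $j$ is even, or $p=2$. In both cases $(-1)^{j-s_1}=(-1)^{s_1}$ in $\FF_p$, so the coefficient equals $\Delta^{i,j}_{r_1,s_1}$.

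I expect the main obstacle to be getting the partial fraction identity's signs and binomials exactly right so that they match \eqref{eq:df_Delta}. I would verify it by induction, or by taking the Laurent expansion in $y$, and then check it against Chen's formula at $r_1=s_1=1$.
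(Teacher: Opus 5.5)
Your proposal is correct and follows essentially the same route as the paper. The paper also splits off the diagonal, applies the partial fraction identity to off-diagonal pairs via $[a]-[b]=[a-b]$, writes the difference as $\varepsilon$ times a monic polynomial, discards the terms with $(r-1)\nmid j$ using $\sum_{\varepsilon\in\FF_r^\times}\varepsilon^{-j}$, reindexes by translation, and uses $\H_d(i)\H_{<d}(j)=\H_d(i,j)$.

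Your identity for $1/(x^{r_1}(x+y)^{s_1})$ is exactly the paper's identity for $1/(X^{r_1}Y^{s_1})$ after setting $X=x$, $Y=x+y$, so $X-Y=-y$; this confirms your signs. One small point of order: in the term containing $[a+\alpha c]$, sum over $a$ before summing over $\alpha$. This is harmless, since all the sums are finite.
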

\begin{proof}
    Note that 
    \begin{align*}
        \H_{d}(r_1)\H_{d}(s_1)&= \left(\sum_{a\in A_{+,d}}\frac{1}{[a]^{r_1}}\right)\left(\sum_{b\in A_{+,d}}\frac{1}{[b]^{s_1}}\right)
        \\
        &= \sum_{a\in A_{+,d}}\frac{1}{[a]^{r_1+s_1}}+\sum_{\substack{a,b\in A_{+,d}\\a\neq b}}\frac{1}{[a]^{r_1}[b]^{s_1}}
        \\
        &= \H_{d}(r_1+s_1)+\sum_{\substack{a,b\in A_{+,d}\\a\neq b}}\frac{1}{[a]^{r_1}[b]^{s_1}}.
    \end{align*}
    Next, recall the partial fraction decomposition: For elements $x\neq y$ in an integral domain, we have
    $$\frac{1}{x^{r_1}y^{s_1}}=\sum_{i+j=r_1+s_1}\frac{1}{(x-y)^j}\left(\frac{(-1)^{s_1}\binom{j-1}{s_1-1}}{x^i}+\frac{(-1)^{j-r_1}\binom{j-1}{r_1-1}}{y^i}\right).$$
    Thus, we have
    \begin{align}
        \notag\sum_{\substack{a,b\in A_{+,d}\\a\neq b}}\frac{1}{[a]^{r_1}[b]^{s_1}}&=\sum_{\substack{a,b\in A_{+,d}\\a\neq b}}\sum_{i+j=r_1+s_1}\frac{1}{\left([a]-[b]\right)^j}\left(\frac{(-1)^{s_1}\binom{j-1}{s_1-1}}{[a]^i}+\frac{(-1)^{j-r_1}\binom{j-1}{r_1-1}}{[b]^i}\right)
        \\\label{eq:shuffle1}&=\sum_{\substack{a\in A_{+,d}\\ 0\neq f\in A_{<d}}}\sum_{i+j=r_1+s_1}\frac{1}{[f]^j}\left(\frac{(-1)^{s_1}\binom{j-1}{s_1-1}}{[a]^i}+\frac{(-1)^{j-r_1}\binom{j-1}{r_1-1}}{[a-f]^i}\right).
    \end{align}
    Note that the first summand in \eqref{eq:shuffle1} equals 
    \begin{align*}
        \sum_{i+j=r_1+s_1}\sum_{\substack{a\in A_{+}, 0\neq f\in A\\ d=\deg a>\deg f}}\frac{(-1)^{s_1}\binom{j-1}{s_1-1}}{[a]^i[f]^j}&=\sum_{i+j=r_1+s_1}\left(\sum_{\varepsilon\in\FF_r^\times}\frac{1}{\varepsilon^j}\right)\left(\sum_{\substack{a,f\in A_{+}\\ d=\deg a>\deg f}}\frac{(-1)^{s_1}\binom{j-1}{s_1-1}}{[a]^i[f]^j}\right)
        \\&=\sum_{\substack{i+j=r_1+s_1\\(r-1)\mid j}}(-1)^{s_1+1}\binom{j-1}{s_1-1} \H_{d}(i,j).
    \end{align*}
    Similarly, the second summand in \eqref{eq:shuffle1} becomes
    \begin{align*}
        \sum_{i+j=r_1+s_1}\sum_{\substack{(a-f)\in A_{+}, 0\neq f\in A\\d=\deg (a-f)>\deg f}}\frac{(-1)^{j-r_1}\binom{j-1}{r_1-1}}{[a-f]^i[f]^j}&=\sum_{\substack{i+j=r_1+s_1\\(r-1)\mid j}}(-1)^{j-r_1+1}\binom{j-1}{r_1-1} \H_{d}(i,j).
    \end{align*}
    Note that $(-1)^{s_1+1}=(-1)^{s_1-1}$ and $(-1)^{j-r_1+1}=(-1)^{r_1-1}$ since $(r-1)\mid j$. It follows that 
    $$\H_{d}(r_1)\H_{d}(s_1)=\H_{d}(r_1+s_1)+\sum_{\substack{i+j=r_1+s_1}}\Delta_{r_1,s_1}^{i,j} \H_{d}(i,j).$$
\end{proof}

With the necessary lemmas in hand, we prove that $\widehat{\H_{<d_0}}:\Rcal\to\KK$ forms an $\FF_p$-algebra homomorphism. That is, ${\H_{<d_0}}(\ww{s})$ satisfies the $r$-shuffle relations.
\begin{thm}\label{thm:abstract-r-shuffle}
    Let $1\leq d\leq d_0$. Then $\widehat{\H_{<d}}$ is an $\FF_p$-algebra homomorphism. That is, 
    $$\widehat{\H_{<d}}(x_{\ww{r}}\ast x_{\ww{s}})=\widehat{\H_{<d}}(x_{\ww{r}})\widehat{\H_{<d}}(x_{\ww{s}})=\H_{<d}(\ww{r})\H_{<d}(\ww{s})$$
    for any indices $\ww{r},\ww{s}\in\Ical$.
\end{thm}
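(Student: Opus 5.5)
Since both sides are $\FF_p$-bilinear in the two words, it suffices to prove
\[
\widehat{\H_{<d}}(x_{\ww{r}}\ast x_{\ww{s}})=\H_{<d}(\ww{r})\H_{<d}(\ww{s})
\]
for all indices $\ww{r},\ww{s}\in\Ical$. I argue by induction on $d$, and within a fixed $d$ by induction on $\dep(\ww{r})+\dep(\ww{s})$.

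\textbf{Base cases.} If one of the indices is empty, the identity is item (1) of Definition \ref{df-zeta-values-as-words} together with $\H_{<d}(\varnothing)=1$. For $d=1$ (more generally whenever depths exceed $d$), $\H_{<1}$ of any index of depth $\ge 2$ vanishes. For depth one, $\H_{<1}(s)=\H_0(s)=1/[1]^s$, and one checks the identity directly. A cleaner option is to start the induction at $d=0$: then $\H_{<0}$ kills every nonempty word, and every term of \eqref{eq:inductive-r-shuffle} is a word with at least one letter, so both sides are $0$.

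\textbf{Inductive step $d\to d+1$, for nonempty $\ww{r},\ww{s}$.} By Lemma \ref{lem:H<d=sumHd} and Lemma \ref{lem:d=d product <d},
\[
\H_{<d+1}(\ww{r})=\H_{<d}(\ww{r})+\H_d(r_1)\H_{<d}(\ww{r}^-),
\]
and similarly for $\ww{s}$. Multiplying out gives four terms:
\begin{align*}
\H_{<d+1}(\ww{r})\H_{<d+1}(\ww{s})
&=\H_{<d}(\ww{r})\H_{<d}(\ww{s})
+\H_d(r_1)\,\H_{<d}(\ww{r}^-)\H_{<d}(\ww{s})\\
&\quad+\H_d(s_1)\,\H_{<d}(\ww{r})\H_{<d}(\ww{s}^-)
+\H_d(r_1)\H_d(s_1)\,\H_{<d}(\ww{r}^-)\H_{<d}(\ww{s}^-).
\end{align*}
I treat each term in turn.

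\begin{itemize}
\item The first term equals $\widehat{\H_{<d}}(x_{\ww{r}}\ast x_{\ww{s}})$ by the induction hypothesis in $d$.
\item In the second term, $\H_{<d}(\ww{r}^-)\H_{<d}(\ww{s})=\widehat{\H_{<d}}(x_{\ww{r}^-}\ast x_{\ww{s}})$ by induction. The third term is handled the same way.
\item In the fourth term, Lemma \ref{lem:shuffle base} expands $\H_d(r_1)\H_d(s_1)$, which produces $\H_d(r_1+s_1)$ and $\sum\Delta^{i,j}_{r_1,s_1}\H_d(i,j)$. By Lemma \ref{lem:d=d product <d}, $\H_d(i,j)=\H_d(i)\H_{<d}(j)$. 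The remaining factor is $\H_{<d}(\ww{r}^-)\H_{<d}(\ww{s}^-)=\widehat{\H_{<d}}(w)$ with $w=x_{\ww{r}^-}\ast x_{\ww{s}^-}$. Then $\H_{<d}(j)\,\widehat{\H_{<d}}(w)=\widehat{\H_{<d}}(w\ast x_j)$, again by the induction hypothesis at level $d$ applied to the product of two words.
\end{itemize}

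\textbf{Key identity.} For any letter $x_k$ and any word $v$,
\[
\widehat{\H_{<d+1}}(x_k v)=\widehat{\H_{<d}}(x_k v)+\H_d(k)\,\widehat{\H_{<d}}(v),
\]
extended linearly in $v$; this is just Lemmas \ref{lem:H<d=sumHd} and \ref{lem:d=d product <d} again. Apply $\widehat{\H_{<d+1}}$ to the right side of \eqref{eq:inductive-r-shuffle} using this identity. The $\widehat{\H_{<d}}$-parts reassemble, by \eqref{eq:inductive-r-shuffle} read at level $d$, into $\widehat{\H_{<d}}(x_{\ww{r}}\ast x_{\ww{s}})$, i.e. the first term above. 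The $\H_d(\cdot)$-parts reproduce exactly the second, third and fourth terms.

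\textbf{Main obstacle.} The delicate point is the bookkeeping in the fourth term. There the induction hypothesis at level $d$ must be applied to the product $(x_{\ww{r}^-}\ast x_{\ww{s}^-})\ast x_j$. This is legitimate because the hypothesis at level $d$ covers all pairs of words, with no restriction on depth. So I must phrase the induction as: "for all words, at level $d$", and only then pass to $d+1$. The inner depth induction is then unnecessary.
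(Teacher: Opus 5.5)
Your argument is correct and uses the same ingredients as the paper: Lemmas \ref{lem:H<d=sumHd}, \ref{lem:d=d product <d}, \ref{lem:shuffle base} and the recursive definition of $\ast$, so it is essentially the paper's proof. The only difference is the induction variable: you induct on $d$, peeling off one degree layer at a time and using the level-$d$ hypothesis for all pairs of words (so no depth induction is needed and the $d=0$ base case is trivial), whereas the paper inducts on total depth and sums all layers $d_1<d$ at once, with your degree-$d$ slice being exactly the $d_1=d_2=d_3=d$ summands of the three sums in \eqref{eq:shuffle2}.
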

\begin{proof}
     We proceed by induction on the total depth $k=\dep(\ww{r})+\dep(\ww{s})$. When $\ww{r}$ or $\ww{s}$ is empty, the results is clear. In particular, the base case $k=1$ holds. Suppose the result holds for all $k<\ell$. We may assume both $\ww{r}$ and $\ww{s}$ are non-empty.  Let 
    $\ww{r}=(r_1,\ldots,r_n),\ww{s}=(s_1,\ldots,s_m)\in\Ical$ be non-empty indices
    with $m+n=\ell$.
    Then by Lemma \ref{lem:H<d=sumHd}, we have
    \begin{align*}
        &\widehat{\H_{<d}}(x_{\ww{r}})\widehat{\H_{<d}}(x_{\ww{s}})\\
        &=\left(\sum_{0\leq d_1<d}\H_{d_1}(\ww{r})\right)\left(\sum_{0\leq d_2<d}\H_{d_2}(\ww{s})\right)
        \\
        ={} &\begin{multlined}[t]
        \sum_{0\leq d_2<d_1<d}\H_{d_1}(\ww{r})\H_{d_2}(\ww{s})+\sum_{0\leq d_1<d_2<d}\H_{d_1}(\ww{r})\H_{d_2}(\ww{s})+\sum_{0\leq d_2=d_1<d}\H_{d_1}(\ww{r})\H_{d_2}(\ww{s})\\
        =\sum_{0\leq d_1<d}\H_{d_1}(\ww{r})\left(\sum_{0\leq d_2<d_1}\H_{d_2}(\ww{s})\right)+\sum_{0\leq d_2<d}\left(\sum_{0\leq d_1<d_2}\H_{d_1}(\ww{r})\right)\H_{d_2}(\ww{s})  
            \\+ \sum_{0\leq d_3<d}\H_{d_3}(\ww{r})\H_{d_3}(\ww{s})
        \end{multlined}\\
        &=\sum_{0\leq d_1<d}\H_{d_1}(\ww{r})\H_{<d_1}(\ww{s})+\sum_{0\leq d_2<d}\H_{<d_2}(\ww{r})\H_{d_2}(\ww{s})  
            + \sum_{0\leq d_3<d}\H_{d_3}(\ww{r})\H_{d_3}(\ww{s})
    \end{align*}
    For the third equality, note that empty sums are taken to be zero.
    Hence, by Lemma \ref{lem:d=d product <d}, we obtain
    \begin{multline}\label{eq:shuffle2}
\widehat{\H_{<d}}(x_{\ww{r}})\widehat{\H_{<d}}(x_{\ww{s}})
= \sum_{0\leq d_1<d} \H_{d_1}(r_1) \H_{<d_1}(\ww{r}^-) \H_{<d_1}(\ww{s}) \\
+ \sum_{0\leq d_2<d} \H_{d_2}(s_1) \H_{<d_2}(\ww{s}^-) \H_{<d_2}(\ww{r}) \\
+ \sum_{0\leq d_3<d} \H_{d_3}(s_1) \H_{d_3}(r_1) \H_{<d_3}(\ww{r}^-) \H_{<d_3}(\ww{s}^-).
\end{multline}
By induction hypothesis and Lemma \ref{lem:d=d product <d}, the first summand in \eqref{eq:shuffle2} becomes
    \begin{align*}
        \sum_{0\leq d_1<d}\H_{d_1}({r_1})\H_{<d_1}({\ww{r}^-})\H_{<d_1}(\ww{s})&=\sum_{0\leq d_1<d}\H_{d_1}(r_1)\widehat{\H_{<d_1}}(x_{\ww{r}^-}\ast x_{\ww{s}})\\
        &=
        \sum_{0\leq d_1<d}\widehat{\H_{d_1}}(x_{r_1}(x_{\ww{r}^-}\ast x_{\ww{s}}))
        \\
        &=\widehat{\H_{<d}}(x_{r_1}(x_{\ww{r}^-}\ast x_{\ww{s}})).
    \end{align*}
The second equality can be checked immediately by writing $x_{\ww{r}^-}\ast x_\ww{s}=\sum_{i}c_ix_{\ww{t}_i}$, $c_i\in\FF_p$ and using the $\FF_p$-linearity. Similarly, the second summand in \eqref{eq:shuffle2} equals to
    $$\sum_{0\leq d_2<d} \H_{d_2}(s_1) \H_{<d_2}(\ww{s}^-) \H_{<d_2}(\ww{r})=\widehat{\H_{<d}}(x_{s_1}(x_{\ww{r}}\ast x_{\ww{s}^-})).$$
Finally, by Lemmas \ref{lem:H<d=sumHd}, \ref{lem:d=d product <d}, \ref{lem:shuffle base} and induction hypothesis, the third summand in \eqref{eq:shuffle2} is
\begin{align*}
    &\sum_{0\leq d_3<d} \H_{d_3}(s_1) \H_{d_3}(r_1) \H_{<d_3}(\ww{r}^-) \H_{<d_3}(\ww{s}^-)\\
    &=\sum_{0\leq d_3<d}\H_{d_3}(s_1) \H_{d_3}(r_1) \widehat{\H_{<d_3}}(x_{\ww{r}^-}\ast x_{\ww{s}^-}) \\
    &=
    \sum_{0\leq d_3<d}\left(\H_{d_3}(r_1+s_1)+\sum_{{i+j=r_1+s_1}}\Delta_{r_1,s_1}^{i,j} \H_{d_3}(i,j)\right)\widehat{\H_{<d_3}}(x_{\ww{r}^-}\ast x_{\ww{s}^-})\\
    &=
    \sum_{0\leq d_3<d}\H_{d_3}(r_1+s_1)\widehat{\H_{<d_3}}(x_{\ww{r}^-}\ast x_{\ww{s}^-}) +\sum_{0\leq d_3<d}\sum_{{i+j=r_1+s_1}}\Delta_{r_1,s_1}^{i,j} \H_{d_3}(i,j)\widehat{\H_{<d_3}}(x_{\ww{r}^-}\ast x_{\ww{s}^-}) \\
    &=
    \widehat{\H_{<d}}(x_{r_1+s_1}(x_{\ww{r}^-}\ast x_{\ww{s}^-})) +\sum_{{i+j=r_1+s_1}}\Delta_{r_1,s_1}^{i,j}\sum_{0\leq d_3<d}\H_{d_3}(i)\widehat{\H_{<d_3}}(x_{\ww{r}^-}\ast x_{\ww{s}^-}) \H_{<d_3}(j)\\
    &=
    \widehat{\H_{<d}}(x_{r_1+s_1}(x_{\ww{r}^-}\ast x_{\ww{s}^-})) +\sum_{{i+j=r_1+s_1}}\Delta_{r_1,s_1}^{i,j}\sum_{0\leq d_3<d}\H_{d_3}(i)\widehat{\H_{<d_3}}(x_{\ww{r}^-}\ast x_{\ww{s}^-}) \widehat{\H_{<d_3}}(x_j)\\
    &=
    \widehat{\H_{<d}}(x_{r_1+s_1}(x_{\ww{r}^-}\ast x_{\ww{s}^-})) +\sum_{{i+j=r_1+s_1}}\Delta_{r_1,s_1}^{i,j}\sum_{0\leq d_3<d}\H_{d_3}(i)\widehat{\H_{<d_3}}((x_{\ww{r}^-}\ast x_{\ww{s}^-})\ast x_j)\\
    &=
    \widehat{\H_{<d}}(x_{r_1+s_1}(x_{\ww{r}^-}\ast x_{\ww{s}^-})) +\sum_{{i+j=r_1+s_1}}\Delta_{r_1,s_1}^{i,j}\widehat{\H_{<d}}(x_i((x_{\ww{r}^-}\ast x_{\ww{s}^-})\ast x_j)).
\end{align*}
This completes the proof.
\end{proof}

We now further assume that $\KK$ is a Hausdorff topological ring. Moreover, each $a\in A$ corresponds to an element $[a]\in\KK$ satisfying the following properties:
\begin{enumerate}
    \item For all $a\in A_{+}$, $[a]\in\KK^\times$.
    \item For all \(a,b\in A\),
    $$[a+b]=[a]+[b].$$
    \item For all $a\in A$ and $\varepsilon\in\FF_r$,
    $$[\varepsilon a]=\varepsilon [a].$$
    \item For any index $\ww{s}\in\Ical$, the limit
    $$\mathbf{Z}(\ww{s})=\lim_{d\to\infty}\H_{<d}(\ww{s})$$
    converges in $\KK$.
\end{enumerate}
Then we obtain the following corollary. 
\begin{cor}\label{cor:Z hat shuffle}
    The unique $\FF_p$-linear map
$$\widehat{\mathbf{Z}}:\Rcal\to\KK$$
defined by
$$1=x_{\varnothing}\mapsto 1,\quad x_{\ww{s}}\mapsto \mathbf{Z}(\ww{s}),$$ is an $\FF_p$-algebra homomorphism. That is, 
    $$\widehat{\mathbf{Z}}(x_{\ww{r}}\ast x_{\ww{s}})=\widehat{\mathbf{Z}}(x_{\ww{r}})\widehat{\mathbf{Z}}(x_{\ww{s}})=\mathbf{Z}(\ww{r})\mathbf{Z}(\ww{s})$$
    for any indices $\ww{r},\ww{s}\in\Ical$.
\end{cor}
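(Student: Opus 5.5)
The plan is to pass to the limit $d\to\infty$ in Theorem \ref{thm:abstract-r-shuffle}. Since $\mathbf{Z}$ is defined only on indices in $\Ical$ and $\widehat{\mathbf{Z}}$ is $\FF_p$-linear, it suffices to check multiplicativity on the generating words $x_{\ww{r}},x_{\ww{s}}$. The $\FF_p$-linear extension then gives the homomorphism property on all of $\Rcal$, and $\widehat{\mathbf{Z}}(1)=1$ holds by definition.

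The first step is to note that the hypotheses (1)--(3) for the bracket on all of $A$ imply, for every $d_0\in\NN$, the restricted hypotheses on $A_{<d_0}$ used in Theorem \ref{thm:abstract-r-shuffle}. Here $\KK$ is still an integral domain containing $\FF_r$, as in the standing setup. So for every $d\geq 1$, Theorem \ref{thm:abstract-r-shuffle} (applied with $d_0=d$) gives
$$\widehat{\H_{<d}}(x_{\ww{r}}\ast x_{\ww{s}})=\H_{<d}(\ww{r})\H_{<d}(\ww{s}).$$
The values $\H_{<d}(\ww{t})$ do not depend on the choice of $d_0\geq d$, since they are defined by the same finite sums. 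Hence this identity holds for all $d$ simultaneously.

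The second step is the limit. Write $x_{\ww{r}}\ast x_{\ww{s}}=\sum_i c_i x_{\ww{t}_i}$ as a finite sum with $c_i\in\FF_p$ and $\ww{t}_i\in\Ical$. The product $\ast$ preserves $\Rcal$ spanned by words in positive letters, since the recursion \eqref{eq:inductive-r-shuffle} only produces letters $x_k$ with $k\in\NN$; note that $\Delta^{i,j}_{r_1,s_1}=0$ unless $j>0$, and $i\geq 1$ must be checked. Then the left side equals $\sum_i c_i\H_{<d}(\ww{t}_i)$, which converges to $\sum_i c_i\mathbf{Z}(\ww{t}_i)=\widehat{\mathbf{Z}}(x_{\ww{r}}\ast x_{\ww{s}})$ by hypothesis (4) and continuity of addition and of multiplication by constants in the topological ring $\KK$. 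The right side converges to $\mathbf{Z}(\ww{r})\mathbf{Z}(\ww{s})$ by continuity of multiplication. Because $\KK$ is Hausdorff, limits are unique, and the two limits agree.

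The only point that needs care is the claim that every word appearing in $x_{\ww{r}}\ast x_{\ww{s}}$ has positive letters, so that $\mathbf{Z}$ is defined on it. In \eqref{eq:inductive-r-shuffle}, the letters $x_{r_1},x_{s_1},x_{r_1+s_1}$ are clearly positive, and $x_j$ with $\Delta\neq 0$ has $j>0$. For $x_i$, one needs $i=r_1+s_1-j\geq 1$. If $j\geq r_1+s_1$, then the binomial coefficients $\binom{j-1}{r_1-1}$ and $\binom{j-1}{s_1-1}$ need not vanish, so this is not automatic. However, since $\Rcal$ is defined as spanned by $\{x_k:k\in\NN\}$, the sum over $i+j=r_1+s_1$ is implicitly over $i,j\in\NN$, which settles the point. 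The argument then proceeds by induction on total depth as in Theorem \ref{thm:abstract-r-shuffle}.
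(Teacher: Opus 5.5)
Your proposal is correct and is the argument the paper intends; the paper states this corollary without writing out a proof. You apply Theorem \ref{thm:abstract-r-shuffle} at each finite level $d$ (taking $d_0=d$), write $x_{\ww{r}}\ast x_{\ww{s}}$ as a finite $\FF_p$-combination of words, and pass to the limit using hypothesis (4), continuity of the ring operations, and uniqueness of limits in the Hausdorff ring $\KK$. Your reading that the sum over $i+j=r_1+s_1$ in \eqref{eq:inductive-r-shuffle} runs over $i,j\in\NN$ is the right one, as the definition of $\Rcal$ forces, so your closing appeal to induction on total depth is unnecessary at the limit stage.
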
 

Then, we return to the case of finite multiple harmonic $u$-series and $u$-multiple zeta values. For any $d \in \NN$ and $u \in \CC_\infty$ such that $\C_a(u) \neq 0$ for all $a \in A_{+, <d}$, we define$$\widehat{H_{<d}}(\bullet; u) : \Rcal \to \CC_\infty$$as the unique $\FF_p$-linear map such that $\widehat{H_{<d}}(x_{\ww{s}}; u) := H_{<d}(\ww{s}; u).$
Additionally, for $u \in \CC_\infty$ such that $\zeta_u(\ww{s})$ is defined for every index $\ww{s} \in \Ical$, we define the realization map of $u$-multiple zeta values by $\widehat{\zeta_u}:=\lim_{d\to\infty}\widehat{H_{<d}}(\bullet;u)$.
As immediate consequences of Theorem \ref{thm:abstract-r-shuffle} and Corollary \ref{cor:Z hat shuffle}, we obtain the following results.

\begin{cor}\label{cor:Hu shuffle}
    For any $d\in\NN$ and $u\in \CC_\infty$ such that $\C_a(u)\neq 0$ for all $a\in A_{+,<d}$, the map $\widehat{H_{<d}}(\bullet;u)$ is an $\FF_p$-algebra homomorphism. That is,
    $$\widehat{H_{<d}}(x_{\ww{r}}\ast x_{\ww{s}};u) =\widehat{H_{<d}}(x_{\ww{r}};u)\widehat{H_{<d}}(x_{\ww{s}};u)=H_{<d}(\ww{r};u)H_{<d}(\ww{s};u)$$
    for any indices $\ww{r},\ww{s}\in\Ical$.
\end{cor}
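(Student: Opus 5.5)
This corollary is a direct specialization of Theorem \ref{thm:abstract-r-shuffle}. I would take $\KK=\CC_\infty$, which is an integral domain (indeed a field) containing $\FF_r$. I would take $d_0=d$, and for $a\in A_{<d}$ I would set $[a]:=[a]_u$, evaluated at the given $u$.

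The main step is to check the three hypotheses of the abstract setting.

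\emph{Invertibility.} For $a\in A_{+,<d}$ we need $[a]_u\neq 0$.
\begin{itemize}
\item If $u\neq 0$, then $[a]_u=\C_a(u)/u$, which is nonzero because $\C_a(u)\neq 0$ by assumption.
\item If $u=0$, then $[a]_u=a\neq 0$ because $a$ is monic.
\end{itemize}
So $[a]_u\in\CC_\infty^\times$.

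\emph{Additivity and $\FF_r$-homogeneity.} These are exactly parts (1) and (2) of Lemma \ref{lem-Fq-linear-u-bracket}. When $u=0$ they are trivial, since then $[a]_0=a$.

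Next I would note that under this choice the formal sums agree with the $u$-series. By definitions \eqref{eq:H_d def} and \eqref{eq:H_<d def}, $\H_{<d}(\ww{s})$ coincides with $H_{<d}(\ww{s};u)$ for every $\ww{s}\in\Ical$, with the same convention $H_{<d}(\varnothing;u)=1$. Hence the map $\widehat{\H_{<d}}$ is precisely $\widehat{H_{<d}}(\bullet;u)$.

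Applying Theorem \ref{thm:abstract-r-shuffle} with $d=d_0$ then shows that $\widehat{H_{<d}}(\bullet;u)$ is an $\FF_p$-algebra homomorphism, which gives the displayed identity. I expect no real obstacle here; the only care needed is the degenerate case $u=0$ in the invertibility check, and the observation that the hypothesis is only required on $A_{<d}$.
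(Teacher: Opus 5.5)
Your proposal is correct and matches the paper's argument: the paper also obtains this corollary by specializing Theorem \ref{thm:abstract-r-shuffle} to $\KK=\CC_\infty$, $d_0=d$ and the bracket $[a]_u$, noting that the three required properties hold whenever $\C_a(u)\neq 0$ for all $a\in A_{+,<d}$. One small remark: your separate treatment of $u=0$ is harmless but unnecessary, because $\C_1(0)=0$ and $1\in A_{+,<d}$, so the hypothesis already excludes $u=0$.
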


\begin{cor}\label{cor:zeta u shuffle}
    For $u\in\CC_\infty$ such that $\zeta_u(\ww{s})$ is defined for every index $\ww{s} \in \Ical$, the map $\widehat{\zeta_u}$ is an $\FF_p$-algebra homomorphism. That is,
    $$\widehat{\zeta_u}(x_{\ww{r}}\ast x_{\ww{s}};u) =\widehat{\zeta_u}(x_{\ww{r}};u)\widehat{\zeta_u}(x_{\ww{s}};u)=\zeta_u(\ww{r};u)\zeta_u(\ww{s};u)$$
    for any indices $\ww{r},\ww{s}\in\Ical$.
\end{cor}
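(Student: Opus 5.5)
The plan is to deduce Corollary \ref{cor:zeta u shuffle} from Corollary \ref{cor:Z hat shuffle}, applied with $\KK=\CC_\infty$ and $[a]:=[a]_u$. We first check the hypotheses of the general setting.

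Since $\zeta_u(\ww{s})$ is assumed defined, Definition \ref{df:uMZV} gives $\C_a(u)\neq 0$ for all $a\in A_+$. Hence $[a]_u=\C_a(u)/u\in\CC_\infty^\times$ for every $a\in A_+$; in the case $u=0$ we have $[a]_0=a\neq0$ instead. Additivity and $\FF_r$-homogeneity of the bracket are exactly Lemma \ref{lem-Fq-linear-u-bracket}(1),(2). The field $\CC_\infty$ is a Hausdorff topological field, in particular an integral domain containing $\FF_r$. The remaining hypothesis is that $\mathbf{Z}(\ww{s})=\lim_{d\to\infty}H_{<d}(\ww{s};u)$ converges for every $\ww{s}\in\Ical$, and this is exactly the assumption that $\zeta_u(\ww{s})$ is defined. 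So we have $\mathbf{Z}(\ww{s})=\zeta_u(\ww{s})$.

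Corollary \ref{cor:Z hat shuffle} then says that $\widehat{\mathbf{Z}}$ is an $\FF_p$-algebra homomorphism. It remains to identify $\widehat{\mathbf{Z}}$ with $\widehat{\zeta_u}:=\lim_d\widehat{H_{<d}}(\bullet;u)$. Both maps are $\FF_p$-linear and agree on the basis words $x_{\ww{s}}$ and on $1$. Each element of $\Rcal$ is a finite $\FF_p$-combination of words, so the limit commutes with the finite sum.

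If one prefers not to invoke the abstract corollary, the argument can be run directly. By Corollary \ref{cor:Hu shuffle}, for each $d$ we have
$\widehat{H_{<d}}(x_{\ww{r}}\ast x_{\ww{s}};u)=H_{<d}(\ww{r};u)H_{<d}(\ww{s};u)$.
Write $x_{\ww{r}}\ast x_{\ww{s}}$ as a finite $\FF_p$-combination of words, so the left side tends to $\widehat{\zeta_u}(x_{\ww{r}}\ast x_{\ww{s}})$. The right side tends to $\zeta_u(\ww{r})\zeta_u(\ww{s})$ by continuity of multiplication. Uniqueness of limits in the Hausdorff space $\CC_\infty$ concludes.

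The main point requiring care, rather than a genuine obstacle, is the hypothesis of Corollary \ref{cor:Hu shuffle} for each finite $d$, namely $\C_a(u)\neq0$ for $a\in A_{+,<d}$. This follows from the standing assumption for all $a\in A_+$, and the degenerate case $u=0$ is handled as noted above.
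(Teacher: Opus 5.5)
Your proposal is correct and takes the paper's route: the paper states this corollary as an immediate consequence of Theorem \ref{thm:abstract-r-shuffle} and Corollary \ref{cor:Z hat shuffle}, applied with $\KK=\CC_\infty$ and $[a]=[a]_u$. Your direct alternative, Corollary \ref{cor:Hu shuffle} for each $d$ followed by passing to the limit, is just the unpacked form of that same argument. Your separate treatment of $u=0$ is unnecessary but harmless, since $\C_a(0)=0$ means Definition \ref{df:uMZV} already excludes that case.
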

\subsection{Euler--Carlitz-type Formula for \texorpdfstring{$H_{<d}$}{H<d} at Carlitz Torsion Points}\label{section:3.3}
Recall that for each $\nfk\in A_+$,  $\lambda_{\nfk}=\exp_{\C}(\frac{\cpi}{\nfk})$ is the generator of $\Lambda_\nfk$, the $A$-module of Carlitz $\nfk$-torsion points. In this subsection, we establish an Euler--Carlitz-type formula for $H_{<{\deg\nfk}}(s;\lambda_\nfk)$, which we call the \textit{finite Euler--Carlitz formula}. 

We first define the degenerate Bernoulli--Carlitz numbers.
\begin{df}
    For any $a\in A$, we define the \textit{degenerate Bernoulli-Carlitz number} $\dBC_n(a)\in K$ by the identity
    $$\frac{X}{\C_a\left(\frac{X}{a}\right)}=\sum_{n=0}^\infty \frac{\dBC_n(a)}{\Gamma_{n+1}}X^n.$$
\end{df}
This can be viewed as the function field analog of the \textit{degenerate Bernoulli number} $\beta_n(k^{-1})$ defined by Carlitz \cite{Carlitz1956}.
Recall that the Bernoulli--Carlitz number $\BC_n\in K$ is defined by the identity
$$\frac{X}{\exp_{\C}(X)}=\sum_{n=0}^\infty \frac{\BC_n}{\Gamma_{n+1}}X^n.$$
One checks that as $\deg a\to\infty$, $\dBC_n(a)\to \BC_n$ for each $n\in\NN$ since $\C_a(\frac{X}{a})\to \exp_{\C}(X)$.

Then the finite Euler--Carlitz formula is given as follows:
\begin{thm}\label{thm:finite Euler Carlitz}
    Let $s\in\NN$ be a $r$-even integer, i.e., $(r-1)\mid s$. Let $\nfk\in A_+$. Then we have
    \[
        \frac{H_{<\deg\nfk}(s;\lambda_\nfk)}{(\nfk\lambda_\nfk)^s}=\frac{\dBC_s(\nfk)}{\Gamma_{s+1}}.
    \]
    In particular, $H_{<d}(s;\lambda_\nfk)\in K\cdot \lambda_\nfk^s$.
\end{thm}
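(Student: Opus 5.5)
The plan is to rewrite the finite $u$-power sum at $u=\lambda_\nfk$ as a power sum over the nonzero Carlitz $\nfk$-torsion points. That power sum can then be read off from the logarithmic derivative of $\C_\nfk(X)$, and the comparison with $\frac{X}{\C_\nfk(X/\nfk)}$ follows from a rescaling.

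Write $d=\deg\nfk$ and $\lambda=\lambda_\nfk$. By definition,
\[
H_{<d}(s;\lambda)=\sum_{a\in A_{+,<d}}\frac{\lambda^s}{\C_a(\lambda)^s}.
\]
Since $\Lambda_\nfk$ is a cyclic $A$-module generated by $\lambda$ and isomorphic to $A/\nfk A$, the map $a\mapsto \C_a(\lambda)$ is a bijection from $A_{<d}$ onto $\Lambda_\nfk$. Under this bijection the nonzero $a$ correspond to the nonzero torsion points. Every nonzero $a\in A_{<d}$ is uniquely $\varepsilon b$ with $\varepsilon\in\FF_r^\times$ and $b\in A_{+,<d}$, and $\C_{\varepsilon b}(\lambda)=\varepsilon\C_b(\lambda)$. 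Hence
\[
\sum_{0\neq x\in\Lambda_\nfk}x^{-s}=\Bigl(\sum_{\varepsilon\in\FF_r^\times}\varepsilon^{-s}\Bigr)\sum_{b\in A_{+,<d}}\C_b(\lambda)^{-s}.
\]
When $(r-1)\mid s$ the inner sum equals $r-1=-1$ in $\FF_p$. This gives
\[
H_{<d}(s;\lambda)=-\lambda^s\sum_{0\neq x\in\Lambda_\nfk}x^{-s}.
\]

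Next, $\C_\nfk(X)$ is separable of degree $r^d$ with linear coefficient $\nfk$ and root set $\Lambda_\nfk$. Therefore
\[
\C_\nfk(X)=\nfk X\prod_{0\neq x\in\Lambda_\nfk}\Bigl(1-\frac{X}{x}\Bigr).
\]
I would take the logarithmic derivative of $\C_\nfk(X)/X$, using $\C_\nfk'(X)=\nfk$ because $\C_\nfk$ is $\FF_r$-linear. This gives
\[
\frac{\nfk}{\C_\nfk(X)}-\frac1X=\sum_{x}\frac{1}{X-x}=-\sum_{k\ge0}X^k\sum_{x}x^{-k-1}.
\]
Multiplying by $X$ yields
\[
\frac{\nfk X}{\C_\nfk(X)}=1-\sum_{k\ge1}X^k\sum_{0\neq x\in\Lambda_\nfk}x^{-k}
\]
as formal power series. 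Substituting $X\mapsto X/\nfk$ turns the left side into $X/\C_\nfk(X/\nfk)$. Comparing coefficients of $X^s$ with the definition of $\dBC_s(\nfk)$, for $s\ge1$, gives
\[
\frac{\dBC_s(\nfk)}{\Gamma_{s+1}}=-\nfk^{-s}\sum_{x\neq0}x^{-s}=\frac{H_{<d}(s;\lambda)}{(\nfk\lambda)^s}.
\]

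The "in particular" follows because $\dBC_s(\nfk)\in K$, $\Gamma_{s+1}\in A$ and $\nfk\in K$. The only delicate points are checking the bijection $A_{<d}\to\Lambda_\nfk$ and the characteristic-$p$ sign $\sum_{\varepsilon}\varepsilon^{-s}=-1$. Beyond these, the argument is formal manipulation in $K(\lambda)\mbb{X}$.
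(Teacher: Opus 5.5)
Your proposal is correct and follows essentially the same route as the paper: expand the logarithmic derivative of $\C_\nfk(X)$ over the roots $\C_a(\lambda_\nfk)$ as a geometric series, group the nonzero $a\in A_{<d}$ by $\FF_r^\times$, rescale $X\mapsto X/\nfk$, and compare coefficients with the generating function of $\dBC_s(\nfk)$. The only cosmetic difference is that you work with $\C_\nfk(X)/X=\nfk\prod_{x\neq 0}(1-X/x)$ and do not substitute $a\mapsto -a$, so you avoid the paper's separate check that $(-1)^s=1$ for $r$-even $s$.
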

\begin{proof}
    Let $d=\deg\nfk$. Note that
    $$\C_{\nfk}(X)=\prod_{a\in A_{<d}}\left(X-\C_a(\lambda_\nfk)\right)\quad\text{and}\quad\C_\nfk(X)'=\sum_{a\in A_{<d}}\prod_{\substack{b\in A_{<d}\\ b\neq a}}\left(X-\C_b(\lambda_\nfk)\right).$$
    Thus, the logarithmic derivative of $\C_\nfk(X)$ is
\begin{align*}
\frac{\C_\nfk(X)'}{\C_\nfk(X)}
&= \frac{1}{X}
  + \sum_{0\neq a\in A_{<d}}
    \frac{1}{X-\C_a(\lambda_\nfk)}
\\
&= \frac{1}{X}
  + \sum_{0\neq a\in A_{<d}}
    \frac{1}{X+\C_a(\lambda_\nfk)}
\\
&= \frac{1}{X}
  + \sum_{0\neq a\in A_{<d}}
    \frac{1}{\C_a(\lambda_\nfk)}
    \cdot
    \frac{1}{\displaystyle 1+\frac{X}{\C_a(\lambda_\nfk)}}
\\
&= \frac{1}{X}
    +\sum_{0\neq a\in A_{<d}}
    \frac{1}{\C_a(\lambda_\nfk)}\sum_{s=0}^\infty \frac{(-1)^s}{\C_a(\lambda_\nfk)^s}X^s
\\
&= \frac{1}{X}
    +\sum_{s=0}^\infty \sum_{0\neq a\in A_{<d}}
    \frac{(-1)^s}{\C_a(\lambda_\nfk)^{s+1}}X^s.
\end{align*}
By change of variables, we obtain
\begin{align*}
\frac{X\C_\nfk(\frac{X}{\nfk})'}{\C_\nfk\left(\frac{X}{\nfk}\right)}
&= 1
    +\sum_{s=1}^\infty \sum_{0\neq a\in A_{<d}}\frac{(-1)^{s-1}}{\left(\nfk\C_a(\lambda_\nfk)\right)^s}X^s
\\
&= 1
    +\sum_{s=1}^\infty \left(\sum_{\varepsilon\in\FF_r^\times}\frac{1}{\varepsilon^s}\right)\left(\sum_{ a\in A_{+,<d}}\frac{(-1)^{s-1}}{\left(\nfk\C_a(\lambda_\nfk)\right)^s}\right)X^s\\
&= 1
    +\sum_{\substack{s=1\\(r-1)\mid s}}^\infty \sum_{a\in A_{+,<d}}\frac{(-1)^{s}}{\left(\nfk\C_a(\lambda_\nfk)\right)^s}X^s
\\
&= 1
    +\sum_{\substack{s=1\\(r-1)\mid s}}^\infty (-1)^s \frac{H_{<d}(s;\lambda_\nfk)}{(\nfk\lambda_\nfk)^s} X^s
\\
&= 1
    +\sum_{\substack{s=1\\(r-1)\mid s}}^\infty \frac{H_{<d}(s;\lambda_\nfk)}{(\nfk\lambda_\nfk)^s} X^s.
\end{align*}
For the last equality, we notice that  $(-1)^s=1$ either when $p=2$ or when $p$ is odd with $(r-1)\mid s$. Finally, observe that
$$\frac{X\C_\nfk(\frac{X}{\nfk})'}{\C_\nfk\left(\frac{X}{\nfk}\right)}=\frac{X}{\C_\nfk\left(\frac{X}{\nfk}\right)}=\sum_{s=0}^\infty \frac{\dBC_s(\nfk)}{\Gamma_{s+1}}X^s,$$ so the result follows by comparing the coefficients.
\end{proof}

\subsection{``Limits'' of \texorpdfstring{$H_{<d}$}{H<d} at Carlitz Torsion Points}\label{section:3.4}
We then investigate both ``analytic'' and ``algebraic'' limits of $H_{<\deg \nfk}(\ww{s};\lambda_\nfk)$ as $\deg\nfk\to\infty$ in view of \cite{BTT18}. First, we require the following two lemmas.

\begin{lem}\label{lem:prod-identity}
For $m\ge1$ and elements $X_1,\dots,X_m$, $Y_1,\dots,Y_m$ in a commutative ring, one has
\[
\prod_{i=1}^m X_i-\prod_{i=1}^m Y_i
=\sum_{k=1}^m
\Bigl(\prod_{i<k}Y_i\Bigr)
\bigl(X_k-Y_k\bigr)
\Bigl(\prod_{i>k}X_i\Bigr).
\]
\end{lem}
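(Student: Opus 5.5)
The identity is a telescoping sum, and I would prove it by induction on $m$, with a direct telescoping argument as a cross-check.

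\emph{Telescoping.} For $0\le k\le m$ define the mixed products
\[
P_k=\Bigl(\prod_{i\le k}Y_i\Bigr)\Bigl(\prod_{i>k}X_i\Bigr).
\]
Then $P_0=\prod_{i=1}^m X_i$ and $P_m=\prod_{i=1}^m Y_i$, so
\[
\prod_{i=1}^m X_i-\prod_{i=1}^m Y_i=\sum_{k=1}^m (P_{k-1}-P_k).
\]
For each $k$, the products $P_{k-1}$ and $P_k$ share the factors $\prod_{i<k}Y_i$ and $\prod_{i>k}X_i$. They differ only in the $k$-th slot, where $P_{k-1}$ has $X_k$ and $P_k$ has $Y_k$. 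Hence
\[
P_{k-1}-P_k=\Bigl(\prod_{i<k}Y_i\Bigr)(X_k-Y_k)\Bigl(\prod_{i>k}X_i\Bigr),
\]
which is the $k$-th summand on the right-hand side.

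\emph{Induction.} For $m=1$ both sides equal $X_1-Y_1$. For the inductive step, write
\[
\prod_{i=1}^{m}X_i-\prod_{i=1}^{m}Y_i=\Bigl(\prod_{i=1}^{m-1}X_i-\prod_{i=1}^{m-1}Y_i\Bigr)X_m+\Bigl(\prod_{i=1}^{m-1}Y_i\Bigr)(X_m-Y_m).
\]
Apply the hypothesis to the bracket. Multiplying each resulting term by $X_m$ extends its trailing product $\prod_{k<i\le m-1}X_i$ to $\prod_{i>k}X_i$, which gives the summands for $k\le m-1$. The last term is exactly the $k=m$ summand.

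There is no real obstacle. The only points needing care are that the ring is commutative, so the factors can be reordered freely, and that empty products are $1$, which covers the endpoint cases $k=1$ and $k=m$.
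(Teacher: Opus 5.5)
Your telescoping argument is correct and is essentially the paper's proof. The paper uses $Q_k=\bigl(\prod_{i<k}Y_i\bigr)\bigl(\prod_{i\ge k}X_i\bigr)$ for $1\le k\le m+1$, which is your $P_{k-1}$, telescopes the same way, and reads off each difference $Q_k-Q_{k+1}$ as the $k$-th summand; your inductive cross-check is also correct but not needed.
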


\begin{proof}
We first recall the convention that an empty product equals one. We define for $1\le k\le m+1$
\[
Q_k:=\Bigl(\prod_{i<k}Y_i\Bigr)\Bigl(\prod_{i\ge k}X_i\Bigr).
\]
Then
\[
\prod_{i=1}^m X_i-\prod_{i=1}^m Y_i = Q_1-Q_{m+1} = \sum_{k=1}^m (Q_k-Q_{k+1}).
\]
On the other hand, notice that
\[
Q_k = \Bigl(\prod_{i<k}Y_i\Bigr)X_k\Bigl(\prod_{i>k}X_i\Bigr)\quad\text{and}\quad
Q_{k+1} = \Bigl(\prod_{i<k}Y_i\Bigr)Y_k\Bigl(\prod_{i>k}X_i\Bigr).
\]
Hence,
\[
Q_k-Q_{k+1}
= \Bigl(\prod_{i<k}Y_i\Bigr)\Bigl(\prod_{i>k}X_i\Bigr)(X_k-Y_k),
\]
and summing over $k=1,\dots,m$ yields the claimed identity.
\end{proof}

\begin{lem}\label{lem:[a]n=a}Let $\nfk\in A_+$. Then for all $a\in A_{<{\deg\nfk}}$,$$|[a]_{\lambda_{\nfk}}-a|_\infty<|a|_\infty \quad\text{and}\quad|[a]_{\lambda_\nfk}|_\infty=|a|_\infty.$$\end{lem}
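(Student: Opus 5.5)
We write $d=\deg\nfk$ and $|\lambda_\nfk|_\infty=r^{\eta}$. Since $\lambda_\nfk$ generates $\Lambda_\nfk$ and $\nfk$ is monic of degree $d$, the standard computation of torsion absolute values (cf. \cite[Proposition 12.13]{Rosen2002}) gives $\eta=\frac{1}{r-1}-d$. We can see this directly from the Newton polygon of $\C_\nfk(X)/X$: its roots $\C_b(\lambda_\nfk)$ with $b$ of degree $d-1$ have the largest absolute value, and $\lambda_\nfk$ itself has the smallest, namely $r^{d/(1-r^d)\cdot\ldots}$. Rather than rely on a closed form, I would simply use $|\lambda_\nfk|_\infty=|\cpi/\nfk|_\infty=r^{\frac{r}{r-1}-d}$. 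This holds because $\exp_\C$ is isometric on the disc $|z|_\infty<r^{r/(r-1)}$, and $|\cpi|_\infty=r^{r/(r-1)}$, so $|\cpi/\nfk|_\infty<r^{r/(r-1)}$ whenever $d\ge1$. Hence $\eta=\frac{r}{r-1}-d$.

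Now fix $a\in A$ with $0\le\deg a=e<d$; the case $a=0$ is trivial. We have
\[
[a]_{\lambda_\nfk}-a=\sum_{i=1}^{e}[a,i]\lambda_\nfk^{r^i-1}.
\]
By \cite[Proposition 12.11]{Rosen2002}, $|[a,i]|_\infty=r^{r^i(e-i)}$. So the $i$-th term has absolute value $r^{E_i}$ with
\[
E_i=r^i(e-i)+(r^i-1)\eta=r^i(e-i+\eta)-\eta .
\]
We need $E_i<e$ for $1\le i\le e$. Write $E_i-e=(r^i-1)(e-i+\eta)-i$. Since $e\le d-1$ and $i\ge1$, we get
\[
e-i+\eta\le d-2+\frac{r}{r-1}-d=\frac{r}{r-1}-2=\frac{2-r}{r-1}\le 0 .
\]
Thus $(r^i-1)(e-i+\eta)\le0$, and so $E_i-e\le -i<0$. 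The strong triangle inequality then gives $|[a]_{\lambda_\nfk}-a|_\infty<r^e=|a|_\infty$. The equality $|[a]_{\lambda_\nfk}|_\infty=|a|_\infty$ follows from the ultrametric property.

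The only delicate step is pinning down $|\lambda_\nfk|_\infty$ exactly. If the isometry property of $\exp_\C$ on the relevant disc is not acceptable as standard, I would instead take the value from \cite[Proposition 12.13]{Rosen2002}, which gives the largest possible torsion absolute value $r^{\frac{1}{r-1}}$ at degree $1$, and derive the exponent for a generator via the Newton polygon of $\C_\nfk(X)$. Either route yields $\eta=\frac{r}{r-1}-d$. Note that the estimate above uses only $e-i+\eta\le0$, so any $\eta\le 1-\frac{r}{r-1}+1=\frac{r}{r-1}-d+(d-1)-e+\ldots$ would also suffice; in particular an upper bound $\eta\le \frac{r}{r-1}-d$ is all that is actually needed.
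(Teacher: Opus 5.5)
Your argument is correct and is essentially the paper's. Both use $|\lambda_\nfk|_\infty=r^{\frac{r}{r-1}-d}$ and $|[a,i]|_\infty=r^{r^i(\deg a-i)}$ to show that every term $[a,i]\lambda_\nfk^{r^i-1}$ with $i\ge 1$ has absolute value strictly less than $|a|_\infty$, and then conclude by the ultrametric inequality. The only difference is that you justify $|\lambda_\nfk|_\infty$ through the isometry of $\exp_\C$ on the disc $|z|_\infty<|\cpi|_\infty$ rather than citing \cite[Proposition 12.13]{Rosen2002}.

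When you write it up, delete the stray off-by-one claim $\eta=\frac{1}{r-1}-d$ and the garbled Newton-polygon and final-paragraph fragments. Also, for $a=0$ the strict inequality would read $0<0$, which fails. That case must therefore be excluded rather than called trivial; the paper's proof does this tacitly by working with $a\in A_{+,<d}$.
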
\begin{proof}Let $d=\deg\nfk$. Recall that we have $$|\lambda_\nfk|_\infty=r^{-d+1+\frac{1}{r-1}}\quad\text{and}\quad\left|[a,i]\right|_\infty=r^{r^i(\deg a-i)}$$for $a\in A_{+,<d}$, $0\leq i\leq \deg a$ (see \cite[Propositions 12.11 and 12.13]{Rosen2002}). Thus,$$\left|[a,i]\lambda_\nfk^{r^i-1}\right|_\infty=r^{r^i(\deg a-i)}r^{(r^i-1)(-d+1+\frac{1}{r-1})}=r^{r^i(\deg a-d-i+1+\frac{1}{r-1})+d-1-\frac{1}{r-1}}$$attains its unique maximum at $i=0$. Therefore, we obtain$$|[a]_{\lambda_{\nfk}}-a|_\infty=\left|\sum_{i=1}^{\deg a}[a,i]\lambda_\nfk^{r^i-1}\right|_\infty\leq \max_{1\leq i\leq \deg a}\left|[a,i]\lambda_\nfk^{r^i-1}\right|_\infty<|a|_\infty.$$
In particular,
\[
\left|[a]_{\lambda_\nfk}\right|_\infty=\left|([a]_{\lambda_\nfk}-a)+a\right|_\infty=|a|_\infty.
\]
\end{proof}

Then we can prove that as $\nfk\to\infty$, the ``analytic limit'' of $H_{<\deg\nfk}(\ww{s};\lambda_\nfk)$ is exactly the corresponding Thakur's multiple zeta value $\zeta_A(\ww{s})$. Precisely, we have the following theorem:
\begin{thm}\label{thm:limit of u}
    Let $\nfk\in A_{+}$. Then for any index $\ww{s}\in\Ical^{\ext}$, we have
    $$H_{<\deg\nfk}(\ww{s};\lambda_\nfk)\to \zeta_A(\ww{s})$$
    as $\deg\nfk\to\infty$.
\end{thm}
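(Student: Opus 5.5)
Write $d=\deg\nfk$. The plan is to compare $H_{<d}(\ww{s};\lambda_\nfk)$ with $S_{<d}(\ww{s})$ term by term, and to show that the difference tends to $0$ as $d\to\infty$. Since $S_{<d}(\ww{s})\to\zeta_A(\ww{s})$ by definition, this suffices.

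The key input is a sharper, quantitative version of Lemma \ref{lem:[a]n=a}. For $a\in A_{+,<d}$ we have
\[
[a]_{\lambda_\nfk}-a=\sum_{i\ge1}[a,i]\lambda_\nfk^{r^i-1}.
\]
The computation in that proof shows the $i=1$ term dominates, and its size is
\[
|a|_\infty\, r^{\,r(\deg a-d+1/(r-1))-\deg a+d-1/(r-1)}=|a|_\infty\, r^{-(r-1)(d-\deg a)+1}.
\]
Hence
\[
\left|\frac{[a]_{\lambda_\nfk}}{a}-1\right|_\infty\le r^{-(r-1)(d-\deg a)+1}.
\]
This is small only when $\deg a$ is far below $d$. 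For $\deg a=d-1$ the bound is merely $r^{2-r}$ (and for $r=2$ it is $1$, which is where strictness from the lemma is needed). So a uniform estimate is not available, and we split by degree.

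Fix $\ww{s}=(s_1,\ldots,s_m)$ and a cutoff $e<d$, for instance $e=\lfloor d/2\rfloor$. Write
\[
H_{<d}(\ww{s};\lambda_\nfk)=\sum_{d>d_1>\cdots>d_m\ge0}\prod_j H_{d_j}(s_j;\lambda_\nfk),
\]
and compare each factor $H_{d_j}(s_j;\lambda_\nfk)$ with $S_{d_j}(s_j)$. There are two regimes.

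\textbf{Positive indices.} For $d_j\le e$ we have
\[
|[a]^{-s}-a^{-s}|_\infty\le |a|^{-s}\,r^{-(r-1)(d-e)+1}\cdot C,
\]
so $|H_{d_j}(s;\lambda_\nfk)-S_{d_j}(s)|\le r^{-sd_j}\varepsilon_d$ with $\varepsilon_d\to0$. For $d_j>e$ we use only the crude bound $|H_{d_j}(s;\lambda_\nfk)|,\,|S_{d_j}(s)|\le r^{-sd_j}\le r^{-e}$ from Lemma \ref{lem:[a]n=a}. We then expand the difference of products using Lemma \ref{lem:prod-identity}. Every term of the telescoped sum contains either one factor $H_{d_k}-S_{d_k}$ with $d_k\le e$, which is bounded by $\varepsilon_d$, or a factor with $d_k>e$, which is bounded by $r^{-e}$. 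All other factors are bounded by $1$ in absolute value, since $s_j\ge1$ and Lemma \ref{lem:[a]n=a} holds. The ultrametric inequality then gives a bound $\max(\varepsilon_d,r^{-e})\to0$, uniformly over the finitely many $(d_1,\ldots,d_m)$.

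\textbf{Non-positive entries.} This is the main obstacle. For $s\le0$ the sums $H_{d_j}(s;u)$ are polynomials in $[a]_u$. For $s\le0$ the map $a\mapsto[a]_u^{-s}$ is a polynomial in the $\FF_r$-linear function $[\cdot]_u$. Since $[a]_u=\sum_i [a,i]u^{r^i-1}$ is $\FF_r$-linear in $a$, the vanishing argument of \cite[Theorem 5.1.2]{Thakur2004} applies verbatim: $\sum_{a\in A_{+,d}}L(a)^k=0$ for an $\FF_r$-linear $L$ once $d$ is large relative to $k$. Hence $H_{d}(s;\lambda_\nfk)=0$ for $d\ge d(s)$, independently of $\nfk$. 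For $d_j<d(s)$, the factor $H_{d_j}(s;\lambda_\nfk)$ involves only finitely many $a$ of bounded degree, and $[a]_{\lambda_\nfk}\to a$ as $d\to\infty$ by the estimate above. Such factors therefore converge to $S_{d_j}(s)$, and they are uniformly bounded.

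Combining the two regimes in the telescoping of Lemma \ref{lem:prod-identity}, the terms with a non-positive $s_j$ either vanish or involve a convergent bounded factor. The remaining positive factors are handled as before. One must check that bounded but possibly large factors $|S_{d_j}(s_j)|$ for $s_j\le0$ do not spoil the estimate. They do not, because there are only finitely many such degrees and their bound is fixed independently of $d$.

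Finally, letting $d\to\infty$ gives $H_{<d}(\ww{s};\lambda_\nfk)-S_{<d}(\ww{s})\to0$, which proves the claim.
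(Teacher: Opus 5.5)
Your proposal is correct and follows essentially the paper's route. You compare with $S_{<d}(\ww{s})$ through the telescoping identity of Lemma \ref{lem:prod-identity}, bound the errors with Lemma \ref{lem:[a]n=a}, and use the large-degree vanishing of power sums at non-positive arguments, which you justify more explicitly than the paper via the $\FF_r$-linearity of $a\mapsto[a]_u$. The differences are cosmetic. The paper needs no cutoff $e$: for $s\ge1$, keeping the factor $|a|_\infty^{-s}$ together with the relative error $r^{-(r-1)(d-\deg a)}$ already gives a bound of order $r^{-d}$ uniformly in $a$. The paper also isolates the first non-positive entry instead of handling such entries inside the telescoping. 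Your relative-error exponent has a spurious $+1$, which is harmless here.
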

\begin{proof}
    For the empty index, the result is clear. Let $d=\deg\nfk$. We first assume $\ww{s}=(s_1,\ldots,s_m)\in\Ical$ is positive and non-empty.
    Let $\epsilon_a=[a]_{\lambda_\nfk}-a=\sum_{i=1}^{\deg a}[a,i]\lambda_\nfk^{r^i-1}$. By Lemma \ref{lem:[a]n=a}, we have
    $$\left|[a]_{\lambda_\nfk}\right|_\infty=|a|_\infty\quad
    \text{and}\quad
    \left|\epsilon_a\right|_\infty<|a|_\infty.$$ 
    Thus, for $s\in\NN$, we have
    \begin{align*}
        \left|[a]_{\lambda_\nfk}^{-s}-a^{-s}\right|_\infty&=\left|a^{-s}\left(\left(1+\frac{\epsilon_a}{a}\right)^{-s}-1\right)\right|_\infty.
    \end{align*}
    Note that 
    $$\left(1+\frac{\epsilon_a}{a}\right)^{-s}=1+\sum_{k=1}^\infty \binom{-s}{k}\left(\frac{\epsilon_a}{a}\right)^k,$$
    where the right-hand side converges since $|\epsilon_a|_\infty < |a|_\infty$. Therefore, we obtain
    \begin{align*}
        \left|[a]_{\lambda_\nfk}^{-s}-a^{-s}\right|_\infty&= |a|_\infty^{-s}\left|\sum_{k=1}^\infty \binom{-s}{k}\left(\frac{\epsilon_a}{a}\right)^k\right|_\infty\leq |a|_\infty^{-s}\left|\frac{\epsilon_a}{a}\right|_\infty\\
        &\leq |a|_\infty^{-s-1}\max_{1\leq j\leq \deg a}\left|[a,j]\right|_\infty\left|\lambda_{\nfk}\right|^{r^j-1}_\infty.
    \end{align*}
    Here, note that $\left|\binom{-s}{k}\right|_\infty \leq 1$ and $\left|\epsilon_a/a\right|_\infty<1$.
    By Lemma \ref{lem:prod-identity} and the discussion above, for any fixed $a_1,\ldots,a_m\in A_{+}$ with $d>\deg a_1>\cdots>\deg a_m\geq 0$, we have
    \begin{align*}
        \left|\prod_{i=1}^m[a_i]_{\lambda_\nfk}^{-s_i}-\prod_{i=1}^m a_i^{-s_i}\right|_\infty
        &=\left|\sum_{k=1}^m\left(\prod_{i<k}a_i^{-s_i}\right)\left(\prod_{i>k}[a_i]_{\lambda_\nfk}^{-s_i}\right)\left([a_k]_{\lambda_\nfk}^{-s_k}-a_k^{-s_k}\right)\right|_\infty\\
        &\leq \max_{1\leq k\leq m}\prod_{\substack{i=1\\ i\neq k}}^m\left|a_i\right|_\infty^{-s_i}|a_k|_\infty^{-s_k-1}\max_{1\leq j\leq \deg a_k}\left|[a_k,j]\right|_\infty\left|\lambda_{\nfk}\right|^{r^j-1}_\infty\\
        &\leq \max_{1\leq k\leq m} r^{-2\deg a_k}\max_{1\leq j\leq \deg a_k}r^{r^j(\deg a_k-d-j+1+\frac{1}{r-1})+d-1-\frac{1}{r-1}}\\
        &\leq \max_{1\leq j\leq \deg a_1}r^{r^j(\deg a_1-d-j+1+\frac{1}{r-1})+d-1-\frac{1}{r-1}-2\deg a_1}.
    \end{align*}
    To justify the last two steps, we note that in the penultimate inequality, $|a_i|_\infty^{-s_i} \le 1$ for all $i$ and $-s_k-1 \le -2$ as $s_k \in \mathbb{N}$. For the last inequality, we observe that for any fixed $j \ge 1$, the exponent is a monotonically increasing function of $\deg a_k$, so the maximum is attained at $k=1$.
    
    Therefore, it follows that
    \begin{align*}
        \left|H_{<d}(\ww{s};\lambda_\nfk)-S_{<d}(\ww{s})\right|_\infty&=\left|\sum_{\substack{a_1,\ldots,a_m\in A_+\\ d>\deg a_1>\cdots>\deg a_m\geq 0}}\left(\frac{1}{[a_1]_{\lambda_\nfk}^{s_1}\cdots[a_m]_{\lambda_\nfk}^{s_m}}-\frac{1}{a_1^{s_1}\cdots a_m^{s_m}}\right)\right|_\infty\\
        &\leq \max_{\substack{a_1,\ldots,a_m\in A_+\\d>\deg a_1>\cdots>\deg a_m\geq 0}}\left|\prod_{i=1}^m[a_i]_{\lambda_\nfk}^{-s_i}-\prod_{i=1}^m a_i^{-s_i}\right|_\infty\\
        &\leq \max_{\substack{a_1,\ldots,a_m\in A_+\\d>\deg a_1>\cdots>\deg a_m\geq 0}}\max_{1\leq j\leq \deg a_1}r^{r^j(\deg a_1-d-j+1+\frac{1}{r-1})+d-1-\frac{1}{r-1}-2\deg a_1}\\
        &\leq \max_{1\leq j\leq d-1}r^{r^j(-j+\frac{1}{r-1})-d+1-\frac{1}{r-1}}=r^{r(-1+\frac{1}{r-1})-d+1-\frac{1}{r-1}}\to 0 
    \end{align*}
    as $d\to\infty$, which implies that
    $$H_{<d}(\ww{s};\lambda_\nfk)\to \lim_{d\to\infty}S_{<d}(\ww{s})=\zeta_A(\ww{s}).$$

 It remains to prove the result for non-positive indices $\ww{s}=(s_1,\ldots,s_m)\in\Ical^{\ext}$. Let $i \in \{1, \ldots, m\}$ be the smallest integer such that $s_i\leq 0$. By \cite[Theorem 5.1.2]{Thakur2004}, there exists an integer $N\geq 0$ such that $S_d(s_i)=H_d(s_i; u)=0$ for all $d \geq N$. Now, for $d>N$, we obtain
\begin{align*}
    H_{<d}(\ww{s};u)&=\sum_{d>d_1>\cdots>d_m\geq 0}H_{d_1}(s_1;u)\cdots H_{d_m}(s_m;u)\\
    &=\sum_{d_i=0}^{N-1}H_{d_i}(s_i,\ldots,s_m;u)\sum_{d>d_1>\cdots>d_{i-1}>d_i}H_{d_1}(s_1;u)\cdots H_{d_{i-1}}(s_{i-1};u),
\end{align*}
and similarly,
\begin{align*}
    S_{<d}(\ww{s})&=\sum_{d>d_1>\cdots>d_m\geq 0}S_{d_1}(s_1)\cdots S_{d_m}(s_m)\\
    &=\sum_{d_i=0}^{N-1}S_{d_i}(s_i,\ldots,s_m) \sum_{d>d_1>\cdots>d_{i-1}>d_i}S_{d_1}(s_1)\cdots S_{d_{i-1}}(s_{i-1}).
\end{align*}
Since $i$ is the smallest integer such that $s_i\leq 0$, the preceding entries $s_1,\ldots,s_{i-1}$ are all positive. Thus, by applying almost the same argument as above, we deduce that for each fixed $0\leq d_i\leq N-1$,
$$\left|\sum_{d>d_1>\cdots>d_{i-1}>d_i}H_{d_1}(s_1;\lambda_\nfk)\cdots H_{d_{i-1}}(s_{i-1};\lambda_\nfk)-\sum_{d>d_1>\cdots>d_{i-1}>d_i}S_{d_1}(s_1)\cdots S_{d_{i-1}}(s_{i-1})\right|_\infty\to 0$$
as $d=\deg\nfk \to \infty$. In addition, it follows immediately from definitions that 
$$\lim_{d\to\infty}H_{d_i}(s_i,\ldots,s_m;\lambda_\nfk)=S_{d_i}(s_i,\ldots,s_m).$$
Therefore, we obtain 
$$|H_{<d}(\ww{s})-S_{<d}(\ww{s})|_\infty\to 0$$
as $d\to\infty$, which implies that 
$$H_{<d}(\ww{s};\lambda_\nfk)\to \lim_{d\to\infty}S_{<d}(\ww{s})=\zeta_A(\ww{s}).$$
\end{proof}

Next, we consider the ``algebraic limit'' of $H_{<\deg\nfk}(\ww{s};\lambda_\nfk)$. We begin by recalling the $K$-algebra
$$\Acal_K=\left(\prod_{v}\FF_v\right)\Bigg/\left(\bigoplus_v \FF_v\right),$$
where $v$ ranges over all monic irreducible polynomials in $A$ and $\FF_v=A/(v)$. In addition, we recall the definition of finite multiple zeta values over $K$, which, for any index $\ww{s}=(s_1,\ldots,s_m)\in\Ical$, are given by
$$\zeta_{\Acal_K}(\ww{s})=\left(\sum_{\substack{a_1,\ldots,a_m\in A_+\\\deg v>\deg a_1>\cdots>\deg a_m\geq 0}}\frac{1}{a_1^{s_1}\cdots a_m^{s_m}}\mod{v}\right)_v\in\Acal_K.$$
Then we have the following theorem:
\begin{thm}\label{thm:u to finite MZV}
    For any index $\ww{s}\in\Ical^{\ext}$, we have
    \begin{equation}\label{eq:H_{<n} mod finite}
        \zeta_{\mathcal{A}_K}(\ww{s})=\left(H_{<\deg v}(\ww{s};\lambda_v)\mod{\lambda_v}\right)_v\in\mathcal{A}_K.
    \end{equation}
\end{thm}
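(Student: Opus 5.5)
The plan is to work inside the ring $\Ocal_v:=A[\lambda_v]$, the ring of integers of the Carlitz cyclotomic field $K(\lambda_v)$. The key fact to use is that $v$ is totally ramified there, with $(\lambda_v)$ the unique prime above $v$ and residue field $\Ocal_v/(\lambda_v)\cong A/(v)=\FF_v$. Concretely, $\C_v(X)/X=\Phi^{\C}_v(X)$ is Eisenstein at $v$ with constant term $v$ (cf. Lemma \ref{lem-cyclotomic-zero}). This gives $v\in\lambda_v^{\,r^{\deg v}-1}\Ocal_v$ and, in particular, $v\equiv 0\pmod{\lambda_v}$. So "$\bmod\ \lambda_v$" yields a well-defined element of $\FF_v$ whenever the quantity lies in the localization of $\Ocal_v$ at $(\lambda_v)$. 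The statement then reduces to showing, for each fixed $\ww{s}$ and all but finitely many $v$, that
\[
H_{<\deg v}(\ww{s};\lambda_v)\equiv S_{<\deg v}(\ww{s})\pmod{\lambda_v}
\]
in this local ring.

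The first step is the bracket itself. For $a\in A$ we have $[a]_{\lambda_v}=\C_a(\lambda_v)/\lambda_v=a+\sum_{i\ge 1}[a,i]\lambda_v^{r^i-1}$. Since $[a,i]\in A$ and $r^i-1\ge 1$, this gives $[a]_{\lambda_v}\in\Ocal_v$ and $[a]_{\lambda_v}\equiv a\pmod{\lambda_v}$. For $a\in A_{+,<\deg v}$, $a$ is prime to $v$, so its image in $\FF_v$ is nonzero. Hence $[a]_{\lambda_v}$ is a unit in the localization, and $[a]_{\lambda_v}^{-s}\equiv a^{-s}\pmod{\lambda_v}$ holds for every $s\in\ZZ$, negative exponents included. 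This is where the restriction $\deg a<\deg v$ built into $H_{<\deg v}$ is essential.

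The second step is to sum these congruences. Each term of $H_{<\deg v}(\ww{s};\lambda_v)$ is a finite product $\prod_i[a_i]_{\lambda_v}^{-s_i}$ with all $a_i\in A_{+,<\deg v}$. By the first step, each such product is congruent to $\prod_i a_i^{-s_i}$ modulo $\lambda_v$. Summing over the finitely many tuples gives the congruence with $S_{<\deg v}(\ww{s})$, which lies in $A_{(v)}$. Reducing modulo $(\lambda_v)\cap A_{(v)}=vA_{(v)}$ gives exactly $S_{<\deg v}(\ww{s})\bmod v$. This works uniformly for $\ww{s}\in\Ical^{\ext}$, so no separate treatment of non-positive indices is needed here, unlike in Theorem \ref{thm:limit of u}. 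The empty index is trivial.

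The only step requiring care, and the one I expect to be the main obstacle, is the algebraic input. It consists of identifying $\Ocal_v/(\lambda_v)$ with $\FF_v$ compatibly with $A\to\FF_v$, and checking that the reduction of $S_{<\deg v}(\ww{s})\in K$ via $\lambda_v$ agrees with reduction mod $v$. I would cite the standard Carlitz cyclotomic theory (e.g. \cite{Rosen2002}, Ch.~12) for total ramification with $(\lambda_v)^{r^{\deg v}-1}=(v)$. Alternatively, I would argue directly from the Eisenstein property of $\Phi^{\C}_v$, whose non-leading coefficients lie in $vA$ and whose constant term is $v$. Once this is in place, the componentwise congruence for every $v$ gives equality in $\Acal_K$.
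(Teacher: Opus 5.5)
Your proposal is correct and follows the paper's proof: you work in $A[\lambda_v]$, use total ramification of $v$ to identify $A[\lambda_v]/(\lambda_v)\cong\FF_v$, and reduce the sum termwise via $[a]_{\lambda_v}\equiv a\pmod{\lambda_v}$. The only cosmetic difference is how you get invertibility of $[a]_{\lambda_v}$ for $a\in A_{+,<\deg v}$. You deduce it in the localization at $(\lambda_v)$ from that congruence, whereas the paper cites the fact that $\C_a(\lambda_v)/\lambda_v$ is a global unit of $A[\lambda_v]$ when $\gcd(a,v)=1$.
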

\begin{proof}
   First, note that 
    $H_{<d_v}(\ww{s};\lambda_v)\in K(\Lambda_v),$ whose ring of integers is $A[\lambda_v]$ (see \cite[Proposition 12.9]{Rosen2002}). Furthermore, for all $a\in A_{+,<d_v}$, observe that $\gcd(a,v)=1$, and hence
    $$[a]_{\lambda_v}=\frac{\C_a(\lambda_v)}{\lambda_v}$$
    is a unit in $A[\lambda_v]$ (see \cite[Proposition 12.6]{Rosen2002}), which implies that $H_{<d_v}(\ww{s};\lambda_v)\in A[\lambda_v]$. Next, recall that
    $$vA[\lambda_v]=(\lambda_v)^{\Phi(v)}$$
    is totally ramified where $\Phi(v)=\#(A/(v))^\times=[K(\Lambda_v):K]$ (see \cite[Proposition 12.7]{Rosen2002}). Thus, the inertia degree of $(\lambda_v)$ over $v$ is $f(\lambda_v/v)=1$, so $\FF_v\cong A[\lambda_v]/(\lambda_v)$. Therefore, the right-hand side of \eqref{eq:H_{<n} mod finite} makes sense. Now the result follows immediately from the fact that
    $$[a]_{\lambda_v}=\frac{\C_a(\lambda_v)}{\lambda_v}\equiv a\pmod {\lambda_v}$$
    for all $a\in A_+$.
\end{proof}

Consequently, the ``analytic limit'' of $H_{<\deg\nfk}(\ww{s};\lambda_\nfk)$ becomes the corresponding Thakur's multiple zeta value, whereas its ``algebraic limit'' yields the corresponding finite multiple zeta value over $K$. Recalling the finite Euler--Carlitz formula (Theorem \ref{thm:finite Euler Carlitz}), we obtain the following two corollaries: 

\begin{cor}[Euler-Carlitz formula]\label{cor:Euler-Carlitz}
    Let $s\in\NN$ be a $r$-even integer, i.e., $(r-1)\mid s$. Then 
    $$\frac{\zeta_A(s)}{\cpi^{s}}=\frac{\BC_s}{\Gamma_{s+1}}.$$
\end{cor}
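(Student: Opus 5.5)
The plan is to combine the finite Euler--Carlitz formula (Theorem \ref{thm:finite Euler Carlitz}) with the analytic limit (Theorem \ref{thm:limit of u}) and let $\deg\nfk\to\infty$. Fix an $r$-even $s\in\NN$ and let $\nfk$ range over $A_+$ with $d=\deg\nfk\to\infty$. Theorem \ref{thm:finite Euler Carlitz} gives the exact identity
\[
H_{<d}(s;\lambda_\nfk)=(\nfk\lambda_\nfk)^s\,\frac{\dBC_s(\nfk)}{\Gamma_{s+1}}.
\]
By Theorem \ref{thm:limit of u} with the depth-one index $\ww{s}=(s)$, the left-hand side tends to $\zeta_A(s)$. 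It then suffices to identify the limits of the two factors on the right.

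\textbf{Limit of $\nfk\lambda_\nfk$.} We have $\lambda_\nfk=\exp_\C(\cpi/\nfk)$, so
\[
\nfk\lambda_\nfk=\cpi+\sum_{i\geq 1}\frac{\nfk\,\cpi^{r^i}}{D_i\,\nfk^{r^i}}.
\]
Since $|\cpi|_\infty=r^{r/(r-1)}$ and $|D_i|_\infty=r^{ir^i}$, the $i$-th term has absolute value $r^{d(1-r^i)+r^{i+1}/(r-1)-ir^i}$. This is maximised at $i=1$ for large $d$ and tends to $0$ as $d\to\infty$. Hence $\nfk\lambda_\nfk\to\cpi$, and therefore $(\nfk\lambda_\nfk)^s\to\cpi^s$.

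\textbf{Limit of $\dBC_s(\nfk)$.} This is the step I expect to be the main point to make rigorous; the paper only remarks that $\dBC_n(a)\to\BC_n$. I would argue coefficientwise. Write
\[
\C_\nfk(X/\nfk)=\sum_{i=0}^{d}\frac{[\nfk,i]}{\nfk^{r^i}}X^{r^i}.
\]
It is classical that $[\nfk,i]=\C_\nfk$-coefficient equals $D_i^{-1}$ times a polynomial expression, namely
\[
[\nfk,i]=\frac{1}{D_i}\prod_{j=0}^{i-1}\bigl(\nfk^{r^i}-\nfk^{r^j}\bigr)\cdot\frac{1}{\ell\text{-type factors}}.
\]
Rather than rely on this exact formula, I would use the estimate $|[\nfk,i]|_\infty=r^{r^i(d-i)}$ together with the leading term: $[\nfk,i]\nfk^{-r^i}=1/D_i+O(r^{-d})$ in $K_\infty$. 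This follows from comparing the recursion $[\nfk,i]$ satisfies via $\C_\nfk\C_\theta=\C_\theta\C_\nfk$, i.e.\ $[\nfk,i](\theta^{r^i}-\theta)=[\nfk,i-1]^r-[\nfk,i-1]\cdot(\ldots)$, or more simply from the known formula $[\nfk,i]=\sum$ expressed through $\exp_\C$ and $\log_\C$ coefficients. Thus each coefficient of $\C_\nfk(X/\nfk)$ converges to the corresponding coefficient of $\exp_\C(X)$. The constant coefficient of $X/\C_\nfk(X/\nfk)$ is $1$, and its coefficient of $X^s$ is a fixed polynomial (independent of $\nfk$) in the coefficients of $X^{r^i}$ with $r^i\leq s+1$. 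Hence $\dBC_s(\nfk)\to\BC_s$ for each fixed $s$, as $d\to\infty$.

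\textbf{Conclusion.} Passing to the limit in the identity above yields
\[
\zeta_A(s)=\cpi^s\,\frac{\BC_s}{\Gamma_{s+1}},
\]
which is the Euler--Carlitz formula.
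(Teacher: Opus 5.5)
Your proposal is correct and is essentially the paper's proof: you combine Theorem \ref{thm:finite Euler Carlitz} with Theorem \ref{thm:limit of u} for the index $(s)$ and pass to the limit using $\nfk\lambda_\nfk\to\cpi$ and $\dBC_s(\nfk)\to\BC_s$, and your coefficientwise argument (via Lemma \ref{lem:Carlitz coefficient}, or the recursion $[\nfk,i](\theta^{r^i}-\theta)=[\nfk,i-1]^r-[\nfk,i-1]$, whose unspecified factor is just $1$) supplies detail the paper only asserts. Do delete the displayed ``classical'' product formula for $[\nfk,i]$, which is wrong for $i\geq 2$; the correct one is $[\nfk,i]=D_i^{-1}\prod_{b\in A_{<i}}(\nfk-b)$, which gives $[\nfk,i]\nfk^{-r^i}\to D_i^{-1}$ at once.
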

\begin{proof}
    By Theorem~\ref{thm:limit of u}, we have
    \begin{equation}\label{eq:Euler-Carlitz for finite}
        \frac{H_{<\deg\nfk}(s;\lambda_\nfk)}{(\nfk\lambda_\nfk)^s}=\frac{\dBC_s(\nfk)}{\Gamma_{s+1}}.
    \end{equation}
    Observe that
    \[
        \nfk\lambda_\nfk
        =
        \nfk \exp_\bfC(\cpi/\nfk)
        =
        \cpi
        +
        \sum_{i=1}^{\infty}\frac{\cpi^{r^i}}{D_i\,\nfk^{\,r^i-1}}
        \to \cpi,
    \]
     and $\dBC_s(\nfk)\to\BC_s$ as $d=\deg\nfk\to\infty$. By Theorem~\ref{thm:limit of u}, the identity \eqref{eq:Euler-Carlitz for finite} converges, as $d\to\infty$, to the Euler--Carlitz formula.
\end{proof}

\begin{cor}\label{cor:vanishing of fmzv qeven}
    Let $s\in\NN$ be a $r$-even integer, i.e., $(r-1)\mid s$. Then 
    $$\zeta_{\Acal_K}(s)=(0)_v\in\Acal_K.$$
\end{cor}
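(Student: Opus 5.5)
The plan is to combine Theorem \ref{thm:u to finite MZV} with the finite Euler--Carlitz formula, Theorem \ref{thm:finite Euler Carlitz}, applied with $\nfk=v$ a monic irreducible polynomial. For each such $v$, write $d_v=\deg v$. Theorem \ref{thm:finite Euler Carlitz} gives
\[
H_{<d_v}(s;\lambda_v)=\frac{\dBC_s(v)}{\Gamma_{s+1}}\,v^s\lambda_v^s .
\]
By Theorem \ref{thm:u to finite MZV}, the $v$-component of $\zeta_{\Acal_K}(s)$ is the reduction of $H_{<d_v}(s;\lambda_v)$ modulo $\lambda_v$. It therefore suffices to show that this reduction is $0$ for all but finitely many $v$.

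From the proof of Theorem \ref{thm:u to finite MZV}, we already know that $H_{<d_v}(s;\lambda_v)\in A[\lambda_v]$. The natural idea is to use the factor $\lambda_v^s$, with $s\geq 1$. For this we need the coefficient $c_v:=\dBC_s(v)v^s/\Gamma_{s+1}\in K$ to be $\lambda_v$-integral, i.e.\ $v$-integral as an element of $K$. If $c_v$ is $v$-integral, then $H_{<d_v}(s;\lambda_v)=c_v\lambda_v^s\in(\lambda_v)$, and the component vanishes.

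So the key step is controlling the denominators of $c_v$. I would expand
\[
\frac{X}{\C_v(X/v)}=\Bigl(1+\sum_{i=1}^{d_v}\frac{[v,i]}{v^{r^i}}X^{r^i-1}\Bigr)^{-1}.
\]
This shows that $\dBC_s(v)/\Gamma_{s+1}$ is a polynomial in the quantities $[v,i]/v^{r^i}$. The coefficient of $X^s$ is a sum of products of such terms whose exponents $r^{i}-1$ add up to $s$.

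Here the elementary valuation bound is not enough. We have $[v,i]=D_{d_v}/(D_iL_{d_v-i}^{r^i})$, and this is divisible by $v$ only to a limited extent. The factor $v^{-r^i}$ then contributes a $v$-adic valuation of roughly $-r^i$ per factor, which the multiplier $v^s$ alone does not obviously cancel.

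To get around this, I would use a cleaner route that avoids $\dBC$ entirely: compute $H_{<d_v}(s;\lambda_v)$ modulo $\lambda_v$ directly. By Theorem \ref{thm:u to finite MZV}, this reduction is $S_{<d_v}(s)\bmod v$, and
\[
S_{<d_v}(s)=\sum_{a\in A_{+,<d_v}}a^{-s}.
\]
Since $(r-1)\mid s$, we have $\varepsilon^{-s}=1$ for every $\varepsilon\in\FF_r^\times$. Hence
\[
\sum_{0\neq a\in A_{<d_v}}a^{-s}=(r-1)\,S_{<d_v}(s)=-S_{<d_v}(s).
\]
The elements $0\neq a\in A_{<d_v}$ run exactly over $\FF_v^\times$, so modulo $v$ this sum is $\sum_{x\in\FF_v^\times}x^{-s}$. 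That power sum equals $0$ unless $(|v|_\infty-1)\mid s$.

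For $\deg v$ large we have $r^{d_v}-1>s$, so the power sum is $0$ for all but finitely many $v$. This gives $\zeta_{\Acal_K}(s)=(0)_v$ in $\Acal_K$.

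The main obstacle in the Euler--Carlitz route was the $v$-integrality of $\dBC_s(v)v^s/\Gamma_{s+1}$. The character-sum argument sidesteps it, so I would present that argument. I would mention the finite Euler--Carlitz identity only as motivation: the factor $\lambda_v^s$ is what suggests the vanishing in the first place.
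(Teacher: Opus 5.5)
Your proof is correct, but it takes a different route from the paper's. The paper gets the corollary in one line from the finite Euler--Carlitz formula (Theorem~\ref{thm:finite Euler Carlitz}) together with Theorem~\ref{thm:u to finite MZV}: $H_{<\deg v}(s;\lambda_v)$ is a $K$-multiple of $\lambda_v^s$, so its reduction modulo $\lambda_v$ vanishes.

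You bypass the $u$-deformation entirely. You work from the definition of $\zeta_{\Acal_K}(s)$. Your appeal to Theorem~\ref{thm:u to finite MZV} in the final argument is unnecessary, since the $v$-component is $S_{<\deg v}(s)\bmod v$ by definition. You then use $r$-evenness to get $S_{<\deg v}(s)\equiv -\sum_{x\in\FF_v^\times}x^{-s} \pmod v$, which vanishes once $r^{\deg v}-1>s$.

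Your route is more elementary and self-contained: it needs no torsion points or Bernoulli--Carlitz numbers. It also pins down the exceptional components exactly: for the finitely many $v$ with $(r^{\deg v}-1)\mid s$, the component equals $1$. The paper's route instead presents the vanishing as the ``algebraic limit'' of the same identity whose ``analytic limit'' is the Euler--Carlitz formula (Corollary~\ref{cor:Euler-Carlitz}). That parallel is the structural point of the section.

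The obstacle you found in the Euler--Carlitz route is not a real one, and you need no denominator bounds for $\dBC_s(v)$. We know $H_{<\deg v}(s;\lambda_v)\in A[\lambda_v]$, and $v$ is totally ramified in $K(\Lambda_v)$ with index $r^{\deg v}-1$. So the $\lambda_v$-adic valuation of $c_v\lambda_v^s$ (when it is nonzero) is $(r^{\deg v}-1)\operatorname{ord}_v(c_v)+s\geq 0$. When $s<r^{\deg v}-1$, this forces $\operatorname{ord}_v(c_v)\geq 0$, so the valuation is at least $s\geq 1$. This is the step the paper's one-line proof leaves implicit. It applies to the same cofinite set of $v$ as your argument.
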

\begin{proof}
    The result follows immediately from Theorems \ref{thm:finite Euler Carlitz} and \ref{thm:u to finite MZV}.
\end{proof}
\begin{rmk}
    We mention that Corollary \ref{cor:vanishing of fmzv qeven} was previously established in \cite{Shi2018} via a different approach, where the result was derived from an explicit computation of \textit{sign-free truncated multiple zeta values}.
\end{rmk}
Recall that in the classical theory, finite multiple harmonic $q$-series $Z_{<n}(\ww{s};\zeta_n)$ evaluated at roots of unity relate to symmetric multiple zeta values in the analytic limit and to finite multiple zeta values in the algebraic limit. This connection is viewed as evidence for the Kaneko--Zagier conjecture. 

Hence, it is natural to consider a function field analog of the Kaneko--Zagier conjecture.  
We first recall the notions of fixed and binary relations introduced in \cite{Todd2018}. 

\begin{df}
    For $\bullet\in\{\Acal_K,A\}$, a $K$-linear relation 
    \[ \sum_{i}c_i\zeta_{\bullet}(\ww{s}_i)=0,\quad c_i\in K, \ww{s}_i\in\Ical \]
    is called \textit{binary} if there exist $a_i,b_i\in K$ such that $a_i+b_i=c_i$ for each $i$ and 
    \[ \sum_{i} (a_i S_d(\ww{s}_i) + b_i S_{d+1}(\ww{s}_i)) = 0 \]
    for all $d\in\ZZ$. In particular, it is called \textit{fixed} if $b_i=0$ for all $i$.
\end{df}

It is obvious that $\zeta_{\Acal_K}(\ww{s})$ satisfy all fixed relations. Motivated by a similar spirit of  \cite{BTT18} and Shi's computation \cite{Shi2018}, we consider the following question:

\begin{question}[cf. {\cite[Conjecture 4.6.5]{Shi2018}}]\label{conj:Function_field_KZ}
    Does there exist a well-defined surjective $K$-algebra homomorphism
    \[
    \varphi : \Zcal_{\mathcal{A}_K} \longrightarrow \Zcal_\infty / \zeta_A(r-1)\Zcal_\infty  
    \]
    satisfying
    \[
    \varphi\bigl(\zeta_{\mathcal{A}_K}(\ww{s})\bigr) \equiv \zeta_A(\ww{s}) \pmod{\zeta_A(r-1)}
    \]
    for any index $\ww{s} \in \Ical$? If so, is  $\ker\varphi$ the ideal generated by all binary relations among $\zeta_{\Acal_K}(\ww{s})$ that are not fixed?
\end{question}

\begin{rmk}
In fact, one may further ask whether $\ker\varphi$ is generated by Thakur's fundamental relation  \cite[Theorem 5]{Thakur2009}. That is, whether the equality
  \[
    \ker\varphi = \left\langle \zeta_{\mathcal{A}_K}(r) + (\theta^r - \theta)\zeta_{\mathcal{A}_K}(1, r-1) \right\rangle
    \]
holds.
\end{rmk}

\section{\texorpdfstring{$u$}{u}-Multiple Zeta Values}\label{section:4}
\subsection{\texorpdfstring{$u$}{u}-Multiple Zeta Values on Drinfeld Upper-Half Plane}\label{section:4.1}
We begin by recalling the \textit{Drinfeld upper-half plane}
$$\Omega := \PP^1(\CC_\infty) \setminus \PP^1(K_\infty) = \CC_\infty \setminus K_\infty,$$
which admits a natural rigid analytic structure (see \cite[Proposition 6.1]{Drinfeld1974}). For $z\in\CC_\infty$, we let  $$|z|_i=\inf\{|z-x|_\infty:x\in K_\infty\}$$
be the \textit{imaginary part} of $z$. Then 
\[
\Omega = \bigcup_{n \in \NN} \Omega_n
\quad
\text{where}
\quad
\Omega_n := \{z \in \CC_\infty \mid  r^{-n} \leq |z|_i \leq |z|_\infty \leq r^n\}
\]
is an admissible cover of $\Omega$. Furthermore, a function $f:\Omega\to\CC_\infty$ is called \textit{rigid analytic} if its restriction to each $\Omega_n$ is a uniform limit of rational functions without poles on $\Omega_n$ (see \cite{Goss1980a} and \cite{FresnelPut2004}). We denote by $\Ocal(\Omega)$ the algebra of rigid analytic functions on $\Omega$.

For any index $\ww{s}\in\Ical$, we can view the $u$-multiple zeta values as functions on $\Omega$ by defining
\[
\zeta_u(\ww{s};z):=\zeta_{\exp_\C(\cpi z)}(\ww{s}),\quad z\in\Omega.
\]
Then $\zeta_u(\ww{s};z)$ is well-defined and is a rigid analytic function on $\Omega$:
\begin{lem}
    For any index $\ww{s}\in\Ical^{\ext}$, $\zeta_u(\ww{s};z)$ is a rigid analytic function on $\Omega$.
\end{lem}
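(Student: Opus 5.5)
Set $u=\exp_\C(\cpi z)$ with $z\in\Omega$. By the exact sequence for $\exp_\C$, $\C_a(u)=\exp_\C(\cpi a z)$ vanishes only when $az\in A$, which is impossible for $a\neq 0$ and $z\notin K_\infty$. Hence every $[a]_u$ with $a\in A_+$ is nonzero, and each $H_{<d}(\ww{s};\exp_\C(\cpi z))$ is a finite sum of functions $z\mapsto [a]_{\exp_\C(\cpi z)}^{-s}$. Each of these is rigid analytic on $\Omega$: $\exp_\C(\cpi z)$ is entire, and $\C_a(\exp_\C(\cpi z))/\exp_\C(\cpi z)=\exp_\C(\cpi a z)/\exp_\C(\cpi z)$ is a quotient of entire functions that does not vanish on $\Omega$, since its zeros lie in $a^{-1}A\subset K_\infty$. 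The quotient is analytic where $\exp_\C(\cpi z)\neq 0$, and for non-positive $s$ the term is a polynomial in $[a]_u$. So every finite truncation lies in $\Ocal(\Omega)$. Since $\Ocal(\Omega)$ is closed under uniform limits on each affinoid $\Omega_n$, the plan is to show that $H_{<d}(\ww{s};\exp_\C(\cpi z))\to\zeta_u(\ww{s};z)$ uniformly on each $\Omega_n$.

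\textbf{Key step: a uniform lower bound for $|[a]_u|_\infty$ on $\Omega_n$.} For $z\in\Omega_n$ I would use the standard estimate for the Carlitz exponential on bounded-imaginary-part regions. Write $\exp_\C(\cpi w)=\cpi w\prod_{0\neq c\in A}(1-w/c)$. For $|w|_i\ge \rho>0$, each factor satisfies $|1-w/c|_\infty\geq \max(1,|w|_i/|c|_\infty)$ up to a bounded constant, and this gives $|\exp_\C(\cpi w)|_\infty\ge C\,r^{\,c'\,r^{\deg\text{-type}}}$. More simply, $|\exp_\C(\cpi a z)|_\infty \ge |\exp_\C(\cpi a z)|$ evaluated at the imaginary part, which grows like $\exp$ of $|a|_\infty|z|_i$. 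Concretely, the product formula yields
\[
\log_r|\exp_\C(\cpi w)|_\infty=\log_r|\cpi w|_\infty+\sum_{c}\log_r\max(1,|w|_i/|c|_\infty)+O(1)
\]
whenever $|w|_i$ is not too small. Applied to $w=az$ with $|az|_i=|a|_\infty|z|_i\geq r^{\deg a-n}$, and to the denominator $\exp_\C(\cpi z)$ with $|z|_\infty\le r^n$, which is bounded above uniformly, this should give
\[
|[a]_{u}|_\infty\ \ge\ c_n\, R(\deg a),\qquad R(\deg a)\to\infty,
\]
uniformly in $z\in\Omega_n$. This replaces Corollary~\ref{cor-[a]u-to-infty}, which was pointwise. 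I expect this uniform estimate to be the main obstacle. In particular, $u$ may fail to lie in $\Dfk$, so Lemma~\ref{lem-i_0(d)} cannot be applied directly; the analytic description of $\exp_\C$ via $|z|_i$ is the right tool. The first thing I would try is the classical bound $|\exp_\C(\cpi w)|_\infty\geq |\cpi|_\infty^{-1}\cdots$ used in proving convergence of Drinfeld Eisenstein series, namely that $|\exp_\C(\cpi w)|_\infty$ grows at least like $r^{q^{\lfloor \log_r|w|_i\rfloor}}$.

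\textbf{Conclusion.} For positive $\ww{s}$, the uniform bound makes each term of $H_{d}(\ww{s};u)$ satisfy $|H_{d}(\ww{s};u)|_\infty\le (c_nR(d))^{-1}$, because all $[a_j]_u$ have absolute value at least $c_n\min(1,\dots)$ and the leading factor has $\deg a_1=d$. By the ultrametric inequality, the partial sums are then uniformly Cauchy on $\Omega_n$, so the limit is rigid analytic. For non-positive indices I would proceed as in Theorem~\ref{thm:limit of u}. Let $s_i$ be the first non-positive entry. Since $H_{d}(s_i;u)$ vanishes for $d\ge N$ identically in $u$ (Thakur's vanishing, as cited), $\zeta_u(\ww{s};z)$ is a finite sum of products of polynomial-type analytic pieces $H_{d_i}(s_i,\ldots,s_m;u)$ with tails involving only the positive entries $s_1,\ldots,s_{i-1}$. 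Those tails converge uniformly by the same estimate. Products and finite sums of rigid analytic functions are rigid analytic, which completes the argument.
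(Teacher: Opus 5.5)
Your proposal follows essentially the same route as the paper: you prove uniform convergence of the truncations on each $\Omega_n$ via the Gekeler-type growth bound $\log_r|\exp_\C(\cpi w)|_\infty\ge |w|_i$ applied to $w=az$ with $|az|_i=|a|_\infty|z|_i$, and for non-positive indices you split at the first non-positive entry and use Thakur's vanishing, so only tails with positive entries need the estimate. One caveat: your factor-by-factor product heuristic is literally valid only after using $A$-periodicity to translate $w$ so that $|w|_\infty=|w|_i$ (otherwise the crude bound $|1-w/c|_\infty\ge |w|_i/|c|_\infty$ on the roughly $|w|_\infty$ factors with $|c|_\infty=|w|_\infty$ is too weak to aggregate), but the classical Eisenstein-series estimate you fall back on is exactly the lemma the paper cites.
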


\begin{proof}
    For the empty index, the result is clear. First, suppose that $\ww{s}=(s_1,\ldots,s_m)\in\Ical$ is a non-empty positive index. Then 
    \begin{align*}
        \zeta_u(\ww{s};z)=\zeta_{\exp_\C(\cpi z)}(\ww{s})&=\lim_{d\to\infty}\sum_{\substack{a_1,\ldots,a_m\in A_+\\d>\deg a_1>\cdots>\deg a_m\geq 0}}\frac{\exp_\C(\cpi z)^{s_1+\cdots+s_m}}{\C_{a_1}(\exp_\C(\cpi z))^{s_1}\cdots \C_{a_m}(\exp_\C(\cpi z))^{s_m}}\\
        &=\lim_{d\to\infty}\sum_{\substack{a_1,\ldots,a_m\in A_+\\d>\deg a_1>\cdots>\deg a_m\geq 0}}\frac{\exp_\C(\cpi z)^{s_1+\cdots+s_m}}{\exp_{\C}(\cpi a_1z)^{s_1}\cdots\exp_{\C}(\cpi a_mz)^{s_m}}.
    \end{align*}
    Now, let $n\in\NN$ and $z\in\Omega_n$. Note that for $a\in A_+$ with $\deg a>n+1$, we have $|az|_{\infty}\geq r>1$. By \cite[Lemma 5.5]{Gekeler1988}, 
    \[
    \log_r|\exp_\C(az)|_\infty\geq |az|_i=|a|_\infty|z|_i\to\infty
    \]
    as $\deg a\to\infty$. Thus, $\zeta_u(\ww{s};z)$ is a uniform limit of rational functions without poles on $\Omega_n$, and hence it is a rigid analytic function on $\Omega$.

    Next, let $\ww{s}=(s_1,\ldots,s_m)\in\Ical^{\ext}$ be a non-positive index, and let $i \in \{1, \ldots, m\}$ be the smallest integer such that $s_i\leq 0$. By \cite[Theorem 5.1.2]{Thakur2004}, there exists an integer $N\geq 0$ such that $H_d(s_i; \exp_\C(\cpi z))=0$ for all $d \geq N$. Thus, for $d>N$, we have
    \begin{align*}
        \zeta_u(\ww{s};z)&=\lim_{d\to\infty}H_{<d}(\ww{s};\exp_\C(\cpi z))\\
        &=\lim_{d\to\infty}\sum_{d>d_1>\cdots>d_m\geq 0}H_{d_1}(s_1;\exp_\C(\cpi z))\cdots H_{d_m}(s_m;\exp_\C(\cpi z))\\
        &=\lim_{d\to\infty}\sum_{d_i=0}^{N-1}H_{d_i}(s_i,\ldots,s_m;\exp_\C(\cpi z))\\
        &\qquad\times\sum_{d>d_1>\cdots>d_{i-1}>d_i}H_{d_1}(s_1;\exp_\C(\cpi z))\cdots H_{d_{i-1}}(s_{i-1};\exp_\C(\cpi z))\\
        &=\sum_{d_i=0}^{N-1}c_{d_i}(z)\lim_{d\to\infty}\sum_{\substack{a_1,\ldots,a_{i-1}\in A_+\\d>\deg a_1>\cdots>\deg a_{i-1}>d_i}}\frac{\exp_\C(\cpi z)^{s_1+\cdots+s_{i-1}}}{\exp_{\C}(\cpi a_1z)^{s_1}\cdots\exp_{\C}(\cpi a_{i-1}z)^{s_{i-1}}}
    \end{align*}
    where $c_{d_i}(z)=H_{d_i}(s_i,\ldots,s_m;\exp_\C(\cpi z))$ is clearly rigid analytic.
    It follows by a similar estimate that $\zeta_u(\ww{s};z)$ is a rigid analytic function on $\Omega$.
\end{proof}
By Corollary \ref{cor:zeta u shuffle}, we have the following immediate corollary. 
\begin{cor}
  The unique $\FF_p$-linear map
$$\zeta_u(\bullet;z):\Rcal\to \Ocal(\Omega)$$  
sending $x_{\ww{s}}$ to $\zeta_u(\ww{s};z)$, $\ww{s}\in\Ical$, is an $\FF_p$-algebra homomorphism.
\end{cor}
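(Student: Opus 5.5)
The plan is to reduce the statement to Corollary \ref{cor:zeta u shuffle}, applied pointwise in $z\in\Omega$, and then to transport the conclusion to the algebra $\Ocal(\Omega)$. The preceding lemma already shows that $\zeta_u(\ww{s};z)$ is rigid analytic for every index, so the $\FF_p$-linear map $\zeta_u(\bullet;z):\Rcal\to\Ocal(\Omega)$ is well defined. It is determined by $x_{\varnothing}\mapsto 1$ and $x_{\ww{s}}\mapsto\zeta_u(\ww{s};z)$. Since $\Rcal$ is spanned by the words $x_{\ww{s}}$ and both sides of the desired identity are $\FF_p$-bilinear, it suffices to check
\[
\zeta_u(\bullet;z)(x_{\ww{r}}\ast x_{\ww{s}})=\zeta_u(\ww{r};z)\,\zeta_u(\ww{s};z),\qquad \ww{r},\ww{s}\in\Ical.
\]

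Next I would fix $z\in\Omega$ and set $u=\exp_\C(\cpi z)$. We must confirm that $u$ is an admissible point for Corollary \ref{cor:zeta u shuffle}, meaning that $\C_a(u)\neq 0$ for all $a\in A_+$ and that $\zeta_u(\ww{s})$ converges for every $\ww{s}\in\Ical$.

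\begin{itemize}
\item \emph{Nonvanishing.} We have $\C_a(u)=\exp_\C(\cpi a z)$. This vanishes only if $az\in A$, which is impossible because $z\notin K_\infty$.
\item \emph{Convergence.} The bound from \cite[Lemma 5.5]{Gekeler1988} used in the preceding lemma gives $|[a]_u|_\infty\to\infty$ as $\deg a\to\infty$. The same argument as in the remark following Definition \ref{df:uMZV} then shows that $\zeta_u(\ww{s})$ converges for every positive index.
\end{itemize}

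Hence Corollary \ref{cor:zeta u shuffle} applies. Writing $x_{\ww{r}}\ast x_{\ww{s}}=\sum_i c_i x_{\ww{t}_i}$ with $c_i\in\FF_p$, it yields $\sum_i c_i\zeta_u(\ww{t}_i;z)=\zeta_u(\ww{r};z)\zeta_u(\ww{s};z)$ at this particular $z$.

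Finally, both sides of this identity are elements of $\Ocal(\Omega)$, and multiplication in $\Ocal(\Omega)$ is pointwise. Since the equality holds at every $z\in\Omega$, it holds as an equality of functions. Together with $1\mapsto 1$, this shows the map is an $\FF_p$-algebra homomorphism.

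The only step requiring any care is the admissibility check for $u=\exp_\C(\cpi z)$. Note that $u$ need not lie in $\Dfk$, so Corollary \ref{cor-[a]u-to-infty} cannot be cited directly. Instead, the divergence $|[a]_u|_\infty\to\infty$ has to come from the Gekeler estimate on $\Omega_n$, exactly as in the preceding lemma.
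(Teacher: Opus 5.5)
Your proposal is correct and follows the paper's route. The paper obtains this corollary directly from Corollary \ref{cor:zeta u shuffle}, applied at each point $u=\exp_\C(\cpi z)$, with rigid analyticity supplied by the preceding lemma. Your explicit admissibility check fills in exactly what the paper leaves implicit: the nonvanishing of $\C_a(u)=\exp_\C(\cpi a z)$, and convergence via the Gekeler estimate rather than Corollary \ref{cor-[a]u-to-infty}, which is needed because $u$ need not lie in $\Dfk$.
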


Next, recall that for a rigid analytic function $f: \Omega \to \CC_\infty$ that is $A$-periodic (i.e., $f(z+a) = f(z)$ for all $a \in A$), $f(z)$ admits a unique $t$-expansion
$$ f(z) = \sum_{N \in \ZZ} a_N t^N,\quad a_N\in\CC_\infty$$
which converges for $|z|_i$ sufficiently large (see \cite{Goss1980a}). Here, we let the local parameter at infinity be
$$ t = t(z) := \frac{1}{\exp_\C(\cpi z)}. $$

One checks easily that $\zeta_u(\ww{s};z)$ is $A$-periodic. We now compute the $t$-expansion of $\zeta_u(\ww{s};z)$ at positive index.

\begin{thm}\label{thm:t_expansion}
    For any non-empty index $\ww{s}=(s_1,\ldots,s_m)\in\Ical$, the rigid analytic function $\zeta_u(\ww{s}; z)$ admits a $t$-expansion with coefficients in $A$:
    \[ \zeta_u(\ww{s}; z) = \sum_{N=0}^\infty c_N t^N \in A[[t]] \]
    where $c_0 = 1$ if $m=1$, and $c_0 = 0$ if $m \geq 2$. 
\end{thm}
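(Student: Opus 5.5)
Set $u=\exp_\C(\cpi z)=1/t$. Since $\C_a$ is a polynomial with coefficients in $A$ and leading term $\C_a(u)=[a,\deg a]\,u^{r^{\deg a}}+\cdots$ with $[a,\deg a]$ the leading coefficient of $a$ (equal to $1$ for $a\in A_+$), we first express each term in $t$. For $a\in A_{+,d}$ we write
\[
\frac{u}{\C_a(u)}=\frac{t^{-1}}{t^{-r^d}\bigl(1+\sum_{i<d}[a,i]t^{r^d-r^i}\bigr)}=t^{r^d-1}\Bigl(1+\sum_{i=0}^{d-1}[a,i]\,t^{r^d-r^i}\Bigr)^{-1}.
\]
The bracketed factor is $1+(\text{element of } tA[t])$, so its inverse lies in $1+tA[[t]]$. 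Hence $[a]_u^{-s}=t^{s(r^d-1)}g_{a,s}(t)$ with $g_{a,s}\in 1+tA[[t]]$, for every $s\in\NN$. This is the main step; it shows that coefficients stay in $A$ and that the $t$-order grows with $\deg a$.

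Next, for a tuple $a_1,\ldots,a_m\in A_+$ with $\deg a_1>\cdots>\deg a_m\ge 0$, the product $\prod_j [a_j]_u^{-s_j}$ lies in $t^{M}A[[t]]$, where $M=\sum_j s_j(r^{\deg a_j}-1)$. Since $\deg a_1\ge m-1$, we have $M\ge r^{m-1}-1$, and more importantly $M\ge r^{\deg a_1}-1\to\infty$. For each fixed $N$, only tuples with $r^{\deg a_1}-1\le N$ contribute to the coefficient of $t^N$, and there are finitely many of them. Therefore the partial sums $H_{<d}(\ww{s};u)$, each lying in $A[[t]]$, converge $t$-adically to a formal series $F(t)=\sum_N c_Nt^N\in A[[t]]$. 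All coefficients $c_N$ are finite sums of elements of $A$, so they lie in $A$. The constant term comes only from tuples with $M=0$, which forces every $\deg a_j=0$, i.e.\ $a_j=1$. Such a tuple exists only when $m=1$, and it contributes $g_{1,s}=1$. This gives $c_0=1$ if $m=1$ and $c_0=0$ if $m\ge2$.

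Finally, we identify this formal series with the analytic $t$-expansion. For $|z|_i$ large, $|t|_\infty$ is small; by \cite[Lemma 5.5]{Gekeler1988}, $|t|_\infty<1$ holds once $|z|_i$ is large enough. We need each expansion $g_{a,s}(t)$ to converge for such $t$, with the coefficient bound needed for rearrangement. Since the coefficients are in $A$, which is not bounded, we bound them as follows. We check that $|[a,i]t^{r^d-r^i}|_\infty<1$ for all $a\in A_{+,d}$ and $i<d$ when $|t|_\infty\le r^{-c}$ with $c$ large; this follows from $|[a,i]|_\infty=r^{r^i(d-i)}$, since $r^i(d-i)<c(r^d-r^i)$ once $c\ge 1$. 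Under this condition the geometric expansion converges, and each term has absolute value at most $|t|_\infty^{M}$. The double series is then absolutely convergent in the non-archimedean sense, with terms tending to $0$, so we may rearrange it into $\sum_N c_N t^N$. This equals $\lim_d H_{<d}(\ww{s};u)=\zeta_u(\ww{s};z)$. By uniqueness of $t$-expansions \cite{Goss1980a}, this is the $t$-expansion of $\zeta_u(\ww{s};z)$. The main obstacle is this convergence/rearrangement estimate, uniformly over all $a$; the algebraic part above is straightforward.
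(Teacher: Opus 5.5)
Your proposal is correct and follows the paper's proof. The shared steps are the factorization $[a]_{1/t}=t^{1-r^d}\bigl(1+\sum_{i<d}[a,i]t^{r^d-r^i}\bigr)$ with the bracketed factor invertible in $A[[t]]$, the order bound $M=\sum_j s_j(r^{\deg a_j}-1)\to\infty$ giving finitely many contributions to each $c_N$, and the constant-term analysis. Your extra step identifying the formal series with the analytic $t$-expansion by rearrangement for $|z|_i$ large is sound and is left implicit in the paper, but you need $c>1$ rather than $c\ge 1$: for $r=2$ and $d-i=1$, the inequality $r^i(d-i)<c(r^d-r^i)$ becomes an equality at $c=1$.
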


\begin{proof}
     Evaluating at $u=\exp_\C(\cpi z)=1/t$, we have
    \[ [a]_{1/t} = t \C_a(1/t) = t \sum_{i=0}^d [a,i] t^{-r^i} = t^{1-r^d} \left( 1 + \sum_{i=0}^{d-1} [a,i] t^{r^d - r^i} \right). \]
    Note that $1 + \sum_{i=0}^{d-1} [a,i] t^{r^d - r^i}$ is invertible in $A[[t]]$. Therefore, for any $s\in\NN$, 
    \[ \frac{1}{[a]_{1/t}^s} \in t^{s(r^d-1)} A[[t]]. \]
    Now, we substitute this into the definition of $\zeta_u(\ww{s}; z)$:
    \[ \zeta_u(\ww{s}; z) = \sum_{d_1 > \cdots > d_m \geq 0} \sum_{\substack{a_1, \ldots, a_m \in A_+ \\ \deg a_j = d_j}} \prod_{j=1}^m \frac{1}{[a_j]_{1/t}^{s_j}}. \]
    For any fixed sequence $d_1 > \cdots > d_m \geq 0$, the inner sum lies in $t^M A[[t]]$, where $M = \sum_{j=1}^m s_j (r^{d_j}-1)$. When $m=1$, the only term that gives $M=0$ is $d_1=0$, which contributes exactly $1$ to the sum, and hence $c_0=1$. When $m \ge 2$, we necessarily have $d_1 \ge 1$, which implies $M \ge s_1(r-1) \ge 1$. Hence, there is no constant term. Finally, since $M \to \infty$ as $d_1 \to \infty$, each coefficient $c_N$ involves only finitely many term and therefore lies in $A$.
\end{proof}
\subsection{\texorpdfstring{$u$}{u}-Multiple Zeta Values as Formal Power Series}\label{section:4.2}
We now discuss the expansion of $u$-multiple zeta values as formal power series in $u$. Recall the $u$-multiple zeta values is defined by 
\begin{equation*}
    \zeta_u(\ww{s})=\lim_{d\to\infty}H_{<d}(\ww{s};u)=\lim_{d\to\infty}\sum_{d>d_1>\cdots>d_m\geq 0}H_{d_1}(s_1;u)\cdots H_{d_m}(s_m;u)
\end{equation*}
for any index $\ww{s}=(s_1,\ldots,s_m)\in\Ical^{\ext}$.
We will show that this defines a formal power series with respect to the product topology on $\CC_\infty\mbb{u}$. 

First, for each integer $i \in \NN$, we define the polynomial $P_i(X) \in K[X]$ by 
$$P_i(X) := \sum_{j=0}^i \frac{X^{r^j-1}}{D_j L_{i-j}^{r^j}}$$
For any $n \in \NN$ and $s\in\ZZ$, we define the polynomial $W_n^{(s)}(X) \in K[X]$ explicitly by 
\begin{equation}\label{eq:W_ns}
W_n^{(s)}(X) := \sum_{k=1}^n \binom{-s}{k} \sum_{\substack{i_1, \dots, i_k \ge 1 \\ \sum_{\ell=1}^k \frac{r^{i_\ell}-1}{r-1} = n}} \prod_{\ell=1}^k P_{i_\ell}(X).    
\end{equation}
We put $W_0^{(s)}(X) = 1$ by convention. Moreover, we write
\begin{equation}\label{eq:W_n coefficient}
    W_{n}^{(s)}(X)=\sum_{i=0}^{n(r-1)}c_{n,i}^{(s)}X^{i},\quad c_{n,i}^{(s)}\in K.
\end{equation}
Here, note that since $\deg P_j(X) = r^j - 1$, the degree of each product in \eqref{eq:W_ns} is given by
$$\deg \left( \prod_{\ell=1}^k P_{i_\ell}(X) \right) = \sum_{\ell=1}^k (r^{i_\ell} - 1) = (r-1) \sum_{\ell=1}^k \frac{r^{i_\ell}-1}{r-1} = n(r-1),$$
where the indices $i_1, \dots, i_k$ satisfy the condition in \eqref{eq:W_ns}. Consequently, we have $$\deg W_n^{(s)}(X) \leq n(r-1).$$

We first consider the lemmas below:

\begin{lem}\label{lem:convergence_W}
Let $n\in \NN$ and $s\in\ZZ$. Then
$ H^W_{d,n}(s) \to 0$ as $d\to\infty$
where $$H^W_{d,n}(s) := \sum_{a \in A_{+,d}} \frac{W_n^{(s)}(a)}{a^s}.$$
\end{lem}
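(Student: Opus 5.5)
Since $W_n^{(s)}(X)=\sum_{i=0}^{n(r-1)}c_{n,i}^{(s)}X^i$ by \eqref{eq:W_n coefficient}, with coefficients $c_{n,i}^{(s)}\in K$ independent of $d$, the plan is to reduce the statement to Thakur's power sums by linearity. Expanding, one gets
\[
H^W_{d,n}(s)=\sum_{i=0}^{n(r-1)}c_{n,i}^{(s)}\sum_{a\in A_{+,d}}a^{i-s}=\sum_{i=0}^{n(r-1)}c_{n,i}^{(s)}\,S_d(s-i).
\]
This is a finite $K$-linear combination, with a fixed number of terms ($n(r-1)+1$, independent of $d$), of Thakur's power sums $S_d(s-i)$ at the integers $s-i\in\ZZ$. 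Hence it suffices to show that $S_d(k)\to 0$ as $d\to\infty$ for each fixed $k\in\ZZ$. The constants $c_{n,i}^{(s)}$ are fixed elements of $K$, so their absolute values are bounded independently of $d$, and the ultrametric inequality then gives $|H^W_{d,n}(s)|_\infty\le \max_i |c_{n,i}^{(s)}|_\infty |S_d(s-i)|_\infty\to0$.

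For the required limit I split into two cases. If $k\le 0$, then by \cite[Theorem 5.1.2]{Thakur2004} (already invoked in the paper) $S_d(k)=0$ for all sufficiently large $d$. If $k\ge 1$, every $a\in A_{+,d}$ satisfies $|a^{-k}|_\infty=r^{-dk}$, so the strong triangle inequality gives $|S_d(k)|_\infty\le r^{-dk}\to 0$. This is exactly the observation recorded in the paper just before the definition of $\zeta_A$, namely that $S_d(s)\to0$ for every $s\in\ZZ$.

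There is no real obstacle here. The only point needing care is the vanishing of power sums at non-positive exponents, which is why an external result is needed; everything else is bookkeeping. In particular, the relevant exponents $s-i$ can be non-positive even when $s\in\NN$, since $i$ ranges up to $n(r-1)$. That is why the statement is made for all $s\in\ZZ$ and why the $k\le0$ case must be handled by \cite[Theorem 5.1.2]{Thakur2004}.
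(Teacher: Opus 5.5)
Your proposal is correct and follows the paper's proof: you expand $W_n^{(s)}$ through its fixed coefficients $c_{n,i}^{(s)}\in K$ to write $H^W_{d,n}(s)$ as a finite $K$-linear combination of power sums, then use that $S_d(k)\to 0$ for every $k\in\ZZ$. Your indexing $S_d(s-i)$ is the right one (the paper's display writes $S_d(i-s)$, a harmless typo), and your case split justifying $S_d(k)\to 0$ spells out a fact the paper simply cites.
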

\begin{proof}
By \eqref{eq:W_n coefficient}, we have $$\frac{W_n^{(s)}(X)}{X^s} = \sum_{i=0}^{n(r-1)} c_{n,i}^{(s)} X^{i-s},$$ 
and hence 
\begin{align*}
H^W_{d,n}(s) = \sum_{i=0}^{n(r-1)} c_{n,i}^{(s)} \left( \sum_{a \in A_{+,d}} a^{i-s} \right) =\sum_{i=0}^{n(r-1)} c_{n,i}^{(s)}  S_d(i-s).
\end{align*}
The result follows immediately from the fact that for any $s\in\ZZ$, $S_d(s)\to 0$ as $d\to\infty$. 
\end{proof}

\begin{lem}\label{lem:Carlitz coefficient}Let $a\in A$. Then we have
$$\C_a(u)=\sum_{i=0}^{\deg a}[a,i]u^{r^i}=\sum_{i=0}^\infty \left(\sum_{j=0}^i \frac{a^{r^j}}{D_j L_{i-j}^{r^j}}\right)u^{r^i}$$
where $L_i=\prod_{j=1}^i(\theta-\theta^{q^j})$.
\end{lem}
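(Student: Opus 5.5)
We need the coefficient formula $[a,i]=\sum_{j=0}^i a^{r^j}/(D_jL_{i-j}^{r^j})$, with the convention $[a,i]=0$ for $i>\deg a$. (The $q$ in the definition of $L_i$ is read as $r$, so $L_i=\prod_{j=1}^i(\theta-\theta^{r^j})$.) The plan is to derive it from the functional equation $\C_a(\exp_\C(X))=\exp_\C(aX)$ by composing on the right with the Carlitz logarithm $\log_\C(X)=\sum_{k\ge0}X^{r^k}/L_k$. This is the standard compositional inverse of $\exp_\C$ in $K\{\{\tau\}\}$.

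Step 1 is to recall, or verify, that $\log_\C=\sum_k \tau^k/L_k$ is inverse to $\exp_\C=\sum_j\tau^j/D_j$ in the twisted power series ring. Concretely, one needs $\sum_{j+k=i} D_j^{-1}L_k^{-r^j}=\delta_{i,0}$. I would take this as the classical fact from Goss or Thakur; if a check is wanted, it follows from the recursions $D_i=(\theta^{r^i}-\theta)D_{i-1}^r$ and $L_i=(\theta-\theta^{r^i})L_{i-1}$, together with $\C_\theta\exp_\C=\exp_\C\theta$ and $\log_\C\C_\theta=\theta\log_\C$, which determine both series uniquely.

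Step 2 is the main computation. In $K\{\{\tau\}\}$ we have $\C_a=\exp_\C\circ a\circ\log_\C$, where $a$ denotes the scalar. Expanding,
\[
\Bigl(\sum_j \tfrac{1}{D_j}\tau^j\Bigr)\,a\,\Bigl(\sum_k \tfrac{1}{L_k}\tau^k\Bigr)=\sum_{j,k}\frac{a^{r^j}}{D_jL_k^{r^j}}\tau^{j+k},
\]
using $\tau^j c=c^{r^j}\tau^j$. Collecting the terms with $j+k=i$ gives the coefficient of $\tau^i$ as $\sum_{j=0}^i a^{r^j}/(D_jL_{i-j}^{r^j})$. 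Since $\C_a$ is the polynomial $\sum_{i\le\deg a}[a,i]\tau^i$, comparing coefficients yields both the formula and the vanishing of the right-hand side for $i>\deg a$. Applying this to $u$ gives the stated identity, with the infinite sum being in fact finite.

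The only delicate point is justifying the identity $\C_a=\exp_\C a\log_\C$ as formal series. This comes from right-composing $\C_a\exp_\C=\exp_\C a$ with $\log_\C$ and using associativity in $K\{\{\tau\}\}$. The composition is legitimate because each coefficient of a product of two twisted power series involves only finitely many terms. The functional equation itself holds formally: it holds as entire functions on $\CC_\infty$, and both sides are everywhere convergent $\FF_r$-linear series, so their coefficients agree. I expect Step 1, the inverse relation, to be the main obstacle if it is not cited.
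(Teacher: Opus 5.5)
Your proposal is correct and is essentially the paper's proof. The paper also writes $\C_a(u)=\exp_\C(a\log_\C(u))$ with $\log_\C(u)=\sum_k u^{r^k}/L_k$, expands, and reads off the coefficient of $u^{r^i}$ as $\sum_{j=0}^i a^{r^j}/(D_jL_{i-j}^{r^j})$. Your extra justifications fill in details the paper takes from Goss: the inverse relation via the recursions for $D_i$ and $L_i$, the legitimacy of the formal composition, and reading $q$ as $r$ in $L_i$.
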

\begin{proof}
We first recall the formal Carlitz exponential and logarithm (see \cite{Goss1996})
    $$\exp_\C(u) = \sum_{i=0}^{\infty} \frac{u^{r^i}}{D_i}, \quad \log_\C(u) = \sum_{j=0}^{\infty} \frac{u^{r^j}}{L_j}.$$
    Then we have the following identity in $\CC_\infty\mbb{u}$:
    $$\C_a(u)=\exp_\C(a\log_\C(u))= \sum_{i=0}^{\infty} \frac{1}{D_i} \left( a \sum_{j=0}^{\infty} \frac{u^{r^j}}{L_j} \right)^{r^i}=  \sum_{k=0}^{\infty} \left( \sum_{i=0}^k \frac{a^{r^i}}{D_i L_{k-i}^{r^i}} \right) u^{r^k}.$$
\end{proof}

With these lemmas in hand, we obtain the following expansion of $u$-multiple zeta values in $\CC_\infty\mbb{u}$:
\begin{prop}\label{prop:expansion of umzv}
Let $\ww{s}=(s_1,\ldots,s_m)\in\Ical^{\ext}$. Then $$\zeta_u(\ww{s}) = \sum_{N=0}^{\infty} \gamma_N(\ww{s}) u^{N(r-1)}\in\CC_\infty\mbb{u}$$where the explicit coefficient $\gamma_N(\ww{s})$ is given by \begin{equation}\label{eq:gammaN}
    \gamma_N(\ww{s}) =  \sum_{\substack{n_1, \dots, n_m \ge 0 \\ n_1 + \dots + n_m = N}} \sum_{d_1 > \dots > d_m \ge 0} \prod_{j=1}^m H^W_{d_j,n_j}(s_j) .
\end{equation}
\end{prop}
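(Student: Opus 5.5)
The plan is to expand each factor $[a]_u^{-s}$ as a power series in $u$ with polynomial-in-$a$ coefficients, and to identify these coefficients with $W_n^{(s)}(a)u^{n(r-1)}$. Then sum over $a$ and over the degree chains, and justify exchanging the limit $d\to\infty$ with the coefficient extraction in the product topology on $\CC_\infty\mbb{u}$.

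\textbf{Single bracket.} By Lemma \ref{lem:Carlitz coefficient}, for any $a\in A$,
\[
[a]_u=\sum_{i\ge0}\Bigl(\sum_{j=0}^i \frac{a^{r^j}}{D_jL_{i-j}^{r^j}}\Bigr)u^{r^i-1}=a\Bigl(1+\sum_{i\ge1}P_i(a)\,u^{r^i-1}\Bigr),
\]
where I use $D_0=L_0=1$ and $a^{r^j}/a=a^{r^j-1}$. This identity holds in $\CC_\infty\mbb{u}$ for every $a$, since the coefficients of $u^{r^i-1}$ with $i>\deg a$ vanish automatically.

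For $a\in A_+$ and $s\in\ZZ$, the binomial series gives
\[
[a]_u^{-s}=a^{-s}\sum_{k\ge0}\binom{-s}{k}\Bigl(\sum_{i\ge1}P_i(a)u^{r^i-1}\Bigr)^k .
\]
This is valid formally because the inner series has no constant term. Each exponent satisfies $r^{i}-1=(r-1)\frac{r^i-1}{r-1}$. Collecting the terms with $\sum_\ell \frac{r^{i_\ell}-1}{r-1}=n$ yields
\[
[a]_u^{-s}=\sum_{n\ge0}\frac{W_n^{(s)}(a)}{a^s}\,u^{n(r-1)} .
\]
Only $k\le n$ contributes, matching \eqref{eq:W_ns}. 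Summing over $a\in A_{+,d}$ gives $H_d(s;u)=\sum_{n}H^W_{d,n}(s)u^{n(r-1)}$.

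\textbf{Products and truncated sums.} Multiplying these expansions, the coefficient of $u^{N(r-1)}$ in $H_{<d}(\ww{s};u)$ is
\[
\gamma_N^{<d}(\ww{s}):=\sum_{n_1+\cdots+n_m=N}\ \sum_{d>d_1>\cdots>d_m\ge0}\ \prod_j H^W_{d_j,n_j}(s_j).
\]
Since convergence in the product topology is coefficientwise, it remains to show that $\gamma^{<d}_N(\ww{s})$ converges in $\CC_\infty$ as $d\to\infty$, with limit the full sum \eqref{eq:gammaN}. All other powers of $u$ have zero coefficient.

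\textbf{Convergence (the main obstacle).} For fixed $N$ there are finitely many tuples $(n_j)$. By the computation in the proof of Lemma \ref{lem:convergence_W}, each $H^W_{d,n}(s)$ is a finite $K$-linear combination of the power sums $S_d(i-s)$, $0\le i\le n(r-1)$. Hence
\[
\sum_{d_1>\cdots>d_m}\prod_j H^W_{d_j,n_j}(s_j)=\sum_{i_1,\ldots,i_m}\Bigl(\prod_j c^{(s_j)}_{n_j,i_j}\Bigr)\,S_{<d}(s_1-i_1,\ldots,s_m-i_m),
\]
a finite combination of Thakur multiple power sums at extended indices. Each of these converges to $\zeta_A(s_1-i_1,\ldots,s_m-i_m)$ by the convergence already discussed for $\zeta_A$ on $\Ical^{\ext}$ (positive entries via $|S_d(s)|\to0$; non-positive entries via eventual vanishing of $S_d$, as in \cite[Theorem 5.1.2]{Thakur2004}).

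A direct alternative is a non-archimedean Cauchy argument. Adding a new top degree $d_1=d$ changes $\gamma_N^{<d}$ by $\sum_{(n_j)}H^W_{d,n_1}(s_1)\cdot(\text{a partial sum of bounded size})$. The factor $H^W_{d,n_1}(s_1)$ tends to $0$ by Lemma \ref{lem:convergence_W}, but the partial sums over the lower degrees must be bounded uniformly in $d$. This boundedness is the delicate point, and the reduction to the $S_{<d}$ above is how I would settle it. Both routes give the limit expression \eqref{eq:gammaN}, and also show $\gamma_N(\ww{s})\in\Zcal_\infty$, in line with Theorem \ref{thm:main3}.
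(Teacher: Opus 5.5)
Your proposal is correct and follows the paper's proof essentially step for step: you expand $[a]_u=a(1+\sum_{i\ge1}P_i(a)u^{r^i-1})$ via Lemma \ref{lem:Carlitz coefficient}, apply the binomial series to get $[a]_u^{-s}=\sum_n W_n^{(s)}(a)a^{-s}u^{n(r-1)}$, sum over $a$ and over degree chains, and pass to the limit coefficientwise. The only difference is at the convergence step. The paper simply invokes Lemma \ref{lem:convergence_W}, which suffices after an induction on depth because the convergent lower partial sums are automatically bounded. You instead rewrite each coefficient as a finite $K$-combination of $S_{<d}(s_1-i_1,\ldots,s_m-i_m)$, which is exactly the computation the paper does afterwards in Proposition \ref{prop:gamma mzv}; note that the power sums are $S_d(s-i)$, not $S_d(i-s)$, as your display correctly uses.
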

\begin{proof}Let $a\in A_+$ and $s\in\ZZ$. By Lemma \ref{lem:Carlitz coefficient}, we have
$$[a]_u = \frac{1}{u} \left( au + \sum_{i=1}^{\infty} [a,i] u^{r^i} \right) = a \left( 1 + \sum_{i=1}^{\infty} P_i(a) u^{r^i-1} \right).$$
We first expand the formal series for $[a]_u^{-s}$ using the generalized binomial theorem:
$$[a]_u^{-s} = \frac{1}{a^s} \sum_{k=0}^{\infty} \binom{-s}{k} \left( \sum_{i=1}^{\infty} P_i(a) u^{r^i-1} \right)^k = \frac{1}{a^s} \sum_{k=0}^{\infty} \binom{-s}{k} \left( \sum_{i=1}^{\infty} P_i(a) (u^{r-1})^{\frac{r^i-1}{r-1}} \right)^k.$$
It follows immediately from the definition in \eqref{eq:W_ns} that 
\begin{equation}\label{eq:local_exp}[a]_u^{-s} = \sum_{n=0}^{\infty} \frac{W_n^{(s)}(a)}{a^s} u^{n(r-1)} \in K\mbb{u}.\end{equation}
Now, for any fixed degree $d \ge 0$, we have
$$H_d(s;u) = \sum_{a \in A_{+,d}} \left( \sum_{n=0}^{\infty} \frac{W_n^{(s)}(a)}{a^s} u^{n(r-1)} \right)= \sum_{n=0}^{\infty} H^W_{d,n}(s)  u^{n(r-1)}.$$
For any $d_1 > \dots > d_m \ge 0$, we have
\begin{align*}\prod_{j=1}^m H_{d_j}(s_j;u) &= \prod_{j=1}^m \left( \sum_{n_j=0}^{\infty} H^W_{d_j,n_j}(s_j) u^{n_j(r-1)} \right) \\&= \sum_{N=0}^{\infty} \left( \sum_{\substack{n_1, \dots, n_m \ge 0 \\ n_1+\dots+n_m=N}} \prod_{j=1}^m H^W_{d_j,n_j}(s_j) \right) u^{N(r-1)}.
\end{align*}
It follows that 
\begin{align*}
    \zeta_u(\ww{s}) &=\lim_{d\to\infty} H_{<d}(s_1,\ldots,s_m)\\ &=\lim_{d\to\infty}\sum_{d>d_1 > \dots > d_m \ge 0} \left( \sum_{N=0}^{\infty} \left( \sum_{\substack{n_1, \dots, n_m \ge 0 \\ n_1+\dots+n_m=N}} \prod_{j=1}^m H^W_{d_j,n_j}(s_j) \right) u^{N(r-1)} \right)\\
    &=\lim_{d\to\infty}\sum_{N=0}^{\infty} \left( \sum_{\substack{n_1, \dots, n_m \ge 0 \\ n_1 + \dots + n_m = N}} \sum_{d>d_1 > \dots > d_m \ge 0} \prod_{j=1}^m H^W_{d_j,n_j}(s_j) \right) u^{N(r-1)}
\end{align*}

By Lemma \ref{lem:convergence_W}, we see that the series converges with respect to the product topology and this exactly yields $\zeta_u(\ww{s}) = \sum_{N=0}^\infty \gamma_N(\ww{s}) u^{N(r-1)}$, completing the proof.
\end{proof}

We mention that the constant in the formal expansion of $\zeta_u(\ww{s})$ is precisely $\zeta_A(\ww{s})$.

As observed in the proof of Lemma \ref{lem:convergence_W}, the coefficients $\gamma_N(\ww{s})$ can be, in fact, expressed as finite $K$-linear combinations of Thakur's multiple zeta values.

\begin{prop}\label{prop:gamma mzv}
For any $N \ge 0$ and non-empty $\ww{s} = (s_1, \dots, s_m) \in \Ical^{\ext}$, the coefficient $\gamma_N(\ww{s})$ can be written as $K$-linear combination of Thakur's multiple zeta values
$$\gamma_N(\ww{s}) = \sum_{\substack{n_1, \dots, n_m \ge 0 \\ n_1 + \dots + n_m = N}} \sum_{i_1=0}^{n_1(r-1)} \cdots \sum_{i_m=0}^{n_m(r-1)} \left( \prod_{j=1}^m c_{n_j,i_j}^{(s_j)} \right) \zeta_A(s_1-i_1, \dots, s_m-i_m).$$
\end{prop}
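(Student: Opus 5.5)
The plan is to start from the formula \eqref{eq:gammaN} of Proposition \ref{prop:expansion of umzv}, substitute the expansion of $H^W_{d,n}(s)$ in terms of Thakur power sums obtained in the proof of Lemma \ref{lem:convergence_W}, and then interchange the resulting finite sums with the sum over degrees. The expected outcome is a limit of truncated multiple power sums, which by definition is a Thakur multiple zeta value.

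First, for fixed $n\ge 0$, $s\in\ZZ$ and $d\ge 0$, \eqref{eq:W_n coefficient} gives
\[
H^W_{d,n}(s)=\sum_{i=0}^{n(r-1)}c^{(s)}_{n,i}\,S_d(s-i).
\]
Here the sign in the exponent must be handled carefully: $W_n^{(s)}(a)/a^s=\sum_i c^{(s)}_{n,i}a^{i-s}=\sum_i c^{(s)}_{n,i}a^{-(s-i)}$. So the relevant power sum is $S_d(s-i)$, which matches the statement; the proof of Lemma \ref{lem:convergence_W} writes $S_d(i-s)$, and this should be read as the corresponding sign typo. For $n=0$ we have $W_0^{(s)}=1$, so $c^{(s)}_{0,0}=1$ and the identity reads $H^W_{d,0}(s)=S_d(s)$.

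Next, fix a tuple $(n_1,\dots,n_m)$ with $n_1+\cdots+n_m=N$ and truncate the degree sum at $d$. Multiplying out the finite sums over $i_1,\dots,i_m$ gives
\[
\sum_{d>d_1>\cdots>d_m\ge 0}\prod_{j=1}^m H^W_{d_j,n_j}(s_j)
=\sum_{i_1,\dots,i_m}\Bigl(\prod_{j=1}^m c^{(s_j)}_{n_j,i_j}\Bigr)S_{<d}(s_1-i_1,\dots,s_m-i_m).
\]
The sum over $(i_1,\dots,i_m)$ is finite and its coefficients are independent of $d$. Letting $d\to\infty$, each $S_{<d}(\cdot)$ converges to $\zeta_A(\cdot)$ by the definition of $\zeta_A$ on $\Ical^{\ext}$; convergence holds for arbitrary extended indices because $S_d(s)\to0$ for every $s\in\ZZ$. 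The limit of a finite linear combination is the linear combination of the limits. Summing over the finitely many tuples $(n_j)$ then yields the claimed formula.

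The only real point to check is that $\gamma_N(\ww{s})$ in \eqref{eq:gammaN} really is this limit $\lim_{d\to\infty}$ of truncated sums, rather than an unordered infinite sum whose rearrangement would need justification. This follows from the last display in the proof of Proposition \ref{prop:expansion of umzv}: the $u^{N(r-1)}$-coefficient of $H_{<d}(\ww{s};u)$ is exactly the truncated expression above, and convergence in the product topology means coefficientwise convergence. With that identification, no interchange of infinite sums is needed, and the result is a $K$-linear combination because all $c^{(s)}_{n,i}\in K$.
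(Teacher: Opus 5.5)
Your proposal is correct and follows the paper's proof: expand $H^W_{d,n}(s)=\sum_i c^{(s)}_{n,i}S_d(s-i)$, substitute into \eqref{eq:gammaN}, and pull the finite sums over $i_1,\dots,i_m$ outside the degree sum to recognize $\zeta_A(s_1-i_1,\dots,s_m-i_m)$. You make two things explicit that the paper leaves implicit: you pass through the truncations $S_{<d}$ before taking the limit, and you read $S_d(i-s)$ in the proof of Lemma \ref{lem:convergence_W} as a sign typo for $S_d(s-i)$. Both refinements are accurate.
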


\begin{proof}
By \eqref{eq:W_n coefficient}, we have
$$H^W_{d,n}(s) = \sum_{a \in A_{+,d}} \frac{W_n^{(s)}(a)}{a^s} = \sum_{i=0}^{n(r-1)} c_{n,i}^{(s)} \left( \sum_{a \in A_{+,d}} \frac{1}{a^{s-i}} \right) = \sum_{i=0}^{n(r-1)} c_{n,i}^{(s)} S_d(s-i).$$
Substituting this expansion into the formula for $\gamma_N(\ww{s})$, we obtain
\begin{align*}
\gamma_N(\ww{s}) &= \sum_{\substack{n_1, \dots, n_m \ge 0 \\ n_1 + \dots + n_m = N}} \sum_{d_1 > \dots > d_m \ge 0} \prod_{j=1}^m \left( \sum_{i_j=0}^{n_j(r-1)} c_{n_j,i_j}^{(s_j)} S_{d_j}(s_j-i_j) \right) \\
&= \sum_{\substack{n_1, \dots, n_m \ge 0 \\ n_1 + \dots + n_m = N}} \sum_{i_1=0}^{n_1(r-1)} \cdots \sum_{i_m=0}^{n_m(r-1)} \left( \prod_{j=1}^m c_{n_j,i_j}^{(s_j)} \right) \sum_{d_1 > \dots > d_m \ge 0} \prod_{j=1}^m S_{d_j}(s_j-i_j)\\
&=\sum_{\substack{n_1, \dots, n_m \ge 0 \\ n_1 + \dots + n_m = N}} \sum_{i_1=0}^{n_1(r-1)} \cdots \sum_{i_m=0}^{n_m(r-1)} \left( \prod_{j=1}^m c_{n_j,i_j}^{(s_j)} \right) \zeta_A(s_1-i_1, \dots, s_m-i_m).
\end{align*}
\end{proof}

We compute the coefficient $\gamma_N(\ww{s})$ for $N=1$ and $\dep(\ww{s}) \leq 2$ in the following two examples.
\begin{ex}\label{ex:expansion gamma}
Let $s\in\ZZ$. We have
$$\zeta_u(s) = \sum_{N=0}^{\infty} \gamma_N(s) u^{N(r-1)},$$
where 
$$\gamma_N(s) = \sum_{i=0}^{N(r-1)} c_{N,i}^{(s)} \zeta_A(s-i).$$
We compute the first two terms explicitly:
\begin{enumerate}
    \item For $N=0$, we have $W_0^{(s)}(X) = 1$, which means $c_{0,0}^{(s)} = 1$. Thus, the constant term is exactly the Thakur's zeta value:
    $$\gamma_0(s) = \zeta_A(s).$$
    
    \item For $N=1$, the condition $\frac{r^{i_1}-1}{r-1} = 1$ forces $k=1$ and $i_1=1$, so we obtain
    $$W_1^{(s)}(X) = \binom{-s}{1} P_1(X) = -s \sum_{j=0}^1 \frac{X^{r^j-1}}{D_j L_{1-j}^{r^j}} = -s \left( \frac{1}{L_1} + \frac{X^{r-1}}{D_1} \right).$$
    This gives the coefficients $c_{1,0}^{(s)} = \frac{-s}{L_1}$ and $c_{1,r-1}^{(s)} = \frac{-s}{D_1}$. Consequently, the coefficient of $u^{r-1}$ is given by
    $$\gamma_1(s) = -s \left( \frac{1}{L_1} \zeta_A(s) + \frac{1}{D_1} \zeta_A(s-r+1) \right).$$
\end{enumerate}
\end{ex}

\begin{ex}\label{ex:expansion gamma depth 2}
Let $\ww{s} = (s_1, s_2) \in \Ical^{\ext}$. We have
$$\zeta_u(s_1, s_2) = \sum_{N=0}^{\infty} \gamma_N(s_1, s_2) u^{N(r-1)}.$$
By Proposition \ref{prop:gamma mzv}, the coefficient $\gamma_N(s_1, s_2)$ is given by 
$$\gamma_N(s_1, s_2) = \sum_{n_1+n_2=N} \sum_{i_1=0}^{n_1(r-1)} \sum_{i_2=0}^{n_2(r-1)} c_{n_1,i_1}^{(s_1)} c_{n_2,i_2}^{(s_2)} \zeta_A(s_1-i_1, s_2-i_2).$$
We compute the first two terms:
\begin{enumerate}
    \item For $N=0$, since $c_{0,0}^{(s)} = 1$ for any $s$, we obtain 
    $$\gamma_0(s_1, s_2) = \zeta_A(s_1, s_2).$$

    \item For $N=1$, there are two partitions for $n_1 + n_2 = 1$, namely $(1, 0)$ and $(0, 1)$. 
    \begin{enumerate}
        \item When $(n_1, n_2) = (1, 0)$, we have $i_2 = 0$ and $c_{0,0}^{(s_2)}=1$. The index $i_1$ can be $0$ or $r-1$. Using the coefficients $c_{1,0}^{(s_1)}$ and $c_{1,r-1}^{(s_1)}$ computed in Example \ref{ex:expansion gamma}, this part contributes:
        $$-\frac{s_1}{L_1} \zeta_A(s_1, s_2) - \frac{s_1}{D_1} \zeta_A(s_1-r+1, s_2).$$
        \item Similarly, when $(n_1, n_2) = (0, 1)$, we have $i_1 = 0$ and $c_{0,0}^{(s_1)}=1$. The index $i_2$ can be $0$ or $r-1$, which contributes:
        $$-\frac{s_2}{L_1} \zeta_A(s_1, s_2) - \frac{s_2}{D_1} \zeta_A(s_1, s_2-r+1).$$
    \end{enumerate}
    Summing these two contributions, the coefficient of $u^{r-1}$ is given by
    $$\gamma_1(s_1, s_2) = -\frac{s_1+s_2}{L_1} \zeta_A(s_1, s_2) - \frac{s_1}{D_1} \zeta_A(s_1-r+1, s_2) - \frac{s_2}{D_1} \zeta_A(s_1, s_2-r+1).$$
\end{enumerate}
\end{ex}

Next, we consider the $r$-shuffle relations. Recall in the proof of Proposition \ref{prop:expansion of umzv}, we have
$$H_{<d}(\ww{s};u) = \sum_{N=0}^{\infty} \left( \sum_{\substack{n_1, \dots, n_m \ge 0 \\ n_1 + \dots + n_m = N}} \sum_{d > d_1 > \dots > d_m \ge 0} \prod_{j=1}^m H^W_{d_j,n_j}(s_j) \right) u^{N(r-1)}$$
for any index $\ww{s}\in\Ical$. Furthermore, by Lemma \ref{lem:convergence_W}, this sequence converges to $\zeta_u(\ww{s})$ coefficient-wise as $d \to \infty$. Therefore, we can apply the theory in \S \ref{section:3.2} and obtain the following corollaries:
\begin{cor}\label{cor:Hu shuffle formal}
    For any $d\in\NN$, the map $$\widehat{H_{<d}}(\bullet;u):\Rcal\to \CC_\infty\mbb{u}$$ is an $\FF_p$-algebra homomorphism. That is,
    $$\widehat{H_{<d}}(x_{\ww{r}}\ast x_{\ww{s}};u) =\widehat{H_{<d}}(x_{\ww{r}};u)\widehat{H_{<d}}(x_{\ww{s}};u)=H_{<d}(\ww{r};u)H_{<d}(\ww{s};u)$$
    for any indices $\ww{r},\ww{s}\in\Ical$.
\end{cor}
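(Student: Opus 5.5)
The plan is to apply the abstract framework of \S\ref{section:3.2}, specifically Theorem \ref{thm:abstract-r-shuffle}, with the formal power series ring as the coefficient ring. We take $\KK=\CC_\infty\mbb{u}$. This is an integral domain containing $\FF_r$, since a formal power series ring over a field has no zero divisors.

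We fix $d\in\NN$ and set $d_0=d$. For each $a\in A_{<d_0}$ we take the bracket $[a]:=[a]_u=\C_a(u)/u\in\CC_\infty[u]\subset\KK$. The three axioms are checked as follows.
\begin{enumerate}
\item Additivity and $\FF_r$-homogeneity are exactly Lemma \ref{lem-Fq-linear-u-bracket}(1),(2). They hold as identities of polynomials in $u$, since $\C_a(u)$ is $\FF_r$-linear in $a$ coefficientwise.
\item For invertibility, let $a\in A_{+,<d}$. By Lemma \ref{lem:Carlitz coefficient} (or directly, since $[a,0]=a$), the constant term of $[a]_u$ is $a\neq 0$. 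A power series with nonzero constant term over a field is a unit. Explicitly, $[a]_u^{-s}$ is the expansion \eqref{eq:local_exp} for $s\in\NN$, so $[a]_u\in\KK^\times$.
\end{enumerate}

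With these axioms, the formal sums $\H_{<d}(\ww{s})$ of \eqref{eq:H_<d def} computed in $\KK$ coincide with $H_{<d}(\ww{s};u)\in\CC_\infty\mbb{u}$ as defined in Definition \ref{df:fmhus}. Both are finite sums of products of the same inverses $[a_i]_u^{-s_i}$ in $\CC_\infty\mbb{u}$. Hence $\widehat{\H_{<d}}=\widehat{H_{<d}}(\bullet;u)$ as $\FF_p$-linear maps $\Rcal\to\CC_\infty\mbb{u}$. Theorem \ref{thm:abstract-r-shuffle}, applied with $d=d_0$, then gives that this map is an $\FF_p$-algebra homomorphism, which is the claim.

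The only point requiring care is that Theorem \ref{thm:abstract-r-shuffle} used nothing beyond these axioms. Its proof, via Lemmas \ref{lem:H<d=sumHd}, \ref{lem:d=d product <d} and \ref{lem:shuffle base}, needed only the following:
\begin{enumerate}
\item the partial fraction identity for $x\neq y$ with $x,y,x-y$ invertible;
\item that $[a]-[b]=[a-b]$ with $a-b\in A_{<d}$ nonzero, hence a unit after factoring out the leading coefficient $\varepsilon\in\FF_r^\times$;
\item the evaluation $\sum_{\varepsilon}\varepsilon^{-j}$.
\end{enumerate}
All of these hold verbatim in $\KK$.

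One subtlety is that for $a\neq b$ monic of degree $d'$ we need $[a]-[b]=[a-b]$ to be invertible. This follows because $a-b=\varepsilon f$ with $f$ monic of smaller degree, and $[\varepsilon f]=\varepsilon[f]$ is a unit. This is the step I would double-check, and it is immediate. There is no analytic obstacle, since everything is a finite identity in $\KK$.
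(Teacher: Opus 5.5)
Your proposal is correct and follows essentially the paper's route: the paper obtains this corollary by applying the framework of \S\ref{section:3.2} (Theorem~\ref{thm:abstract-r-shuffle}) with $\KK=\CC_\infty\mbb{u}$ and the bracket $[a]_u$. Your check of the axioms, namely additivity and $\FF_r$-homogeneity from Lemma~\ref{lem-Fq-linear-u-bracket} and invertibility because the constant term of $[a]_u$ is $a\neq 0$, spells out what the paper leaves implicit; the coefficientwise convergence the paper recalls alongside is needed only for the companion statement about $\widehat{\zeta_u}$, not for this finite-$d$ claim.
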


\begin{cor}\label{cor:zeta u shuffle formal}
    The map $$\widehat{\zeta_u}:\Rcal\to\CC_\infty\mbb{u}$$ is an $\FF_p$-algebra homomorphism. That is,
    $$\widehat{\zeta_u}(x_{\ww{r}}\ast x_{\ww{s}};u) =\widehat{\zeta_u}(x_{\ww{r}};u)\widehat{\zeta_u}(x_{\ww{s}};u)=\zeta_u(\ww{r};u)\zeta_u(\ww{s};u)$$
    for any indices $\ww{r},\ww{s}\in\Ical$.
\end{cor}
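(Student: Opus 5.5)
The plan is to apply the abstract framework of \S\ref{section:3.2}, namely Theorem \ref{thm:abstract-r-shuffle} and Corollary \ref{cor:Z hat shuffle}, with $\KK=\CC_\infty\mbb{u}$ in the product topology. First, $\CC_\infty\mbb{u}$ is an integral domain containing $\FF_r$, and it is a Hausdorff topological ring under the product topology. The multiplication is continuous because each coefficient of a product depends only on finitely many coefficients of the factors.

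Next, for each $a\in A$ we take the bracket $[a]:=[a]_u=\C_a(u)/u\in\CC_\infty[u]\subset\CC_\infty\mbb{u}$. Additivity and $\FF_r$-linearity come from Lemma \ref{lem-Fq-linear-u-bracket}. For invertibility, we use Lemma \ref{lem:Carlitz coefficient}, or directly $[a]_u=a(1+\sum_{i\ge1}P_i(a)u^{r^i-1})$. For $a\in A_+$ the constant term is $a\neq 0$, so $[a]_u\in\CC_\infty\mbb{u}^\times$, and its inverse powers are given by \eqref{eq:local_exp}.

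With these inputs, the formal series $\H_{<d}(\ww{s})$ from \eqref{eq:H_<d def} coincides with $H_{<d}(\ww{s};u)$ viewed in $\CC_\infty\mbb{u}$. Theorem \ref{thm:abstract-r-shuffle} then applies for every $d_0$, which yields Corollary \ref{cor:Hu shuffle formal}.

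For Corollary \ref{cor:zeta u shuffle formal} we still need hypothesis (4) of Corollary \ref{cor:Z hat shuffle}: for every $\ww{s}\in\Ical$, $H_{<d}(\ww{s};u)$ must converge in the product topology as $d\to\infty$. This is the one point that needs justification, and it is exactly the content of the proof of Proposition \ref{prop:expansion of umzv}. The coefficient of $u^{N(r-1)}$ in $H_{<d}(\ww{s};u)$ is a finite sum over $n_1+\cdots+n_m=N$ of partial sums
$$\sum_{d>d_1>\cdots>d_m\ge0}\prod_j H^W_{d_j,n_j}(s_j).$$
By Lemma \ref{lem:convergence_W}, $H^W_{d,n}(s)\to0$ as $d\to\infty$, and the $H^W_{d_j,n_j}(s_j)$ for $j\geq 2$ are bounded uniformly. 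Hence the terms with leading degree $d_1=d-1$ tend to $0$, and in the non-archimedean field $\CC_\infty$ this makes each coefficient Cauchy. All other coefficients of $H_{<d}(\ww{s};u)$ vanish, since they are not of the form $u^{N(r-1)}$. So the limit exists coefficientwise and equals $\zeta_u(\ww{s})$ as computed in Proposition \ref{prop:expansion of umzv}.

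Corollary \ref{cor:Z hat shuffle} then gives that $\widehat{\zeta_u}=\lim_d\widehat{H_{<d}}(\bullet;u)$ is an $\FF_p$-algebra homomorphism $\Rcal\to\CC_\infty\mbb{u}$. If one wants to avoid quoting that corollary, the direct argument is to pass to the limit in the identity of Corollary \ref{cor:Hu shuffle formal}. Both sides are finite $\FF_p$-linear combinations, respectively products, of convergent sequences, so continuity of addition and multiplication in $\KK$ suffices.
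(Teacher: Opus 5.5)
Your proposal is correct and follows the paper's route. The paper takes $\KK=\CC_\infty\mbb{u}$ with the product topology and the $u$-bracket. It then feeds the coefficientwise convergence, from the proof of Proposition \ref{prop:expansion of umzv} and Lemma \ref{lem:convergence_W}, into Corollary \ref{cor:Z hat shuffle}; your explicit unit check and ultrametric Cauchy argument fill in details the paper leaves implicit.
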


By exploiting the $r$-shuffle relations of the formal power series $\zeta_u(\ww{s})$, we can extract explicit relations among the coefficients $\gamma_N(\ww{s})$. To formulate this conveniently, we let $\widehat{\gamma_N}: \Rcal \to \CC_\infty$ be the $\FF_p$-linear map defined by $\widehat{\gamma_N}(x_{\ww{s}}) = \gamma_N(\ww{s})$ for any index $\ww{s} \in \Ical$.

\begin{thm}\label{thm:gamma_shuffle}
    For any indices $\ww{r}, \ww{s} \in \Ical$ and any integer $N \ge 0$, we have 
    $$\widehat{\gamma_N}(x_{\ww{r}} \ast x_{\ww{s}}) = \sum_{k=0}^N \widehat{\gamma_k}(x_{\ww{r}}) \widehat{\gamma_{N-k}}(x_{\ww{s}}).$$
\end{thm}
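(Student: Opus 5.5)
The plan is to compare coefficients of $u^{N(r-1)}$ on both sides of the identity in Corollary \ref{cor:zeta u shuffle formal}, working in $\CC_\infty\mbb{u}$ with its product topology.

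First, I would check that the $\FF_p$-linear extension of $x_{\ww{s}}\mapsto\zeta_u(\ww{s})$ is compatible with coefficient extraction. By Proposition \ref{prop:expansion of umzv}, every $\zeta_u(\ww{s})$ with $\ww{s}\in\Ical$ lies in the closed subring $\CC_\infty\mbb{u^{r-1}}$. Its coefficient of $u^{N(r-1)}$ is $\gamma_N(\ww{s})$, and all other coefficients vanish. The coefficient-extraction map $[u^{N(r-1)}]:\CC_\infty\mbb{u}\to\CC_\infty$ is $\CC_\infty$-linear, hence $\FF_p$-linear. So for any $w=\sum_i c_i x_{\ww{t}_i}\in\Rcal$ we get
\[
[u^{N(r-1)}]\,\widehat{\zeta_u}(w)=\sum_i c_i\,\gamma_N(\ww{t}_i)=\widehat{\gamma_N}(w).
\]
For $x_\varnothing=1$ we set $\gamma_0(\varnothing)=1$ and $\gamma_N(\varnothing)=0$ for $N\ge1$, which matches $\zeta_u(\varnothing)=1$.

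Second, I would apply Corollary \ref{cor:zeta u shuffle formal}:
\[
\widehat{\zeta_u}(x_{\ww{r}}\ast x_{\ww{s}})=\zeta_u(\ww{r})\zeta_u(\ww{s}).
\]
By the first step, the coefficient of $u^{N(r-1)}$ on the left is $\widehat{\gamma_N}(x_{\ww{r}}\ast x_{\ww{s}})$. On the right, the product of two series in $u^{r-1}$ has, by the Cauchy product in $\CC_\infty\mbb{u}$, the coefficient $\sum_{k=0}^N\gamma_k(\ww{r})\gamma_{N-k}(\ww{s})$. Equating the two gives the theorem.

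The only step that needs care is the justification of Corollary \ref{cor:zeta u shuffle formal} itself, which the paper states as a consequence of \S\ref{section:3.2}. I would verify it as follows.

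For each $d$, Theorem \ref{thm:abstract-r-shuffle} applies with $\KK=\CC_\infty\mbb{u}$ (an integral domain containing $\FF_r$), $d_0=d$, and bracket $[a]_u$. Here $[a]_u=a(1+O(u^{r-1}))$ is a unit for every $a\in A_{+,<d}$, and $a\mapsto [a]_u$ is $\FF_r$-linear by Lemma \ref{lem-Fq-linear-u-bracket}. This gives $\widehat{H_{<d}}(x_{\ww{r}}\ast x_{\ww{s}};u)=H_{<d}(\ww{r};u)H_{<d}(\ww{s};u)$.

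Then I would pass to the limit $d\to\infty$ coefficientwise. Proposition \ref{prop:expansion of umzv} and Lemma \ref{lem:convergence_W} show that each coefficient of $H_{<d}(\ww{s};u)$ converges. Multiplication in $\CC_\infty\mbb{u}$ is continuous for the coefficientwise topology, because each coefficient of a product is a finite sum of products of coefficients. Hence the limit of the products is the product of the limits. Since $x_{\ww{r}}\ast x_{\ww{s}}$ is a finite $\FF_p$-combination of words, $\widehat{H_{<d}}$ of it also converges to $\widehat{\zeta_u}$ of it. This is exactly Corollary \ref{cor:Z hat shuffle} with $\KK=\CC_\infty\mbb{u}$, which is Hausdorff under the product topology.
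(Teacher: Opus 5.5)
Your proposal is correct and follows the paper's argument: apply Corollary \ref{cor:zeta u shuffle formal}, expand both sides using Proposition \ref{prop:expansion of umzv}, and compare coefficients of $u^{N(r-1)}$ via the Cauchy product. You also fill in details the paper leaves implicit: that coefficient extraction is $\FF_p$-linear, the convention $\gamma_N(\varnothing)=\delta_{N,0}$, and a justification of Corollary \ref{cor:zeta u shuffle formal} through Theorem \ref{thm:abstract-r-shuffle} with $\KK=\CC_\infty\mbb{u}$ and continuity of multiplication in the product topology.
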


\begin{proof}
    By Corollary \ref{cor:zeta u shuffle formal}, we have $\widehat{\zeta_u}(x_{\ww{r}} \ast x_{\ww{s}}) = \zeta_u(\ww{r}) \zeta_u(\ww{s})$. We expand both sides as formal power series in $u$. For the left-hand side, we have
    $$\widehat{\zeta_u}(x_{\ww{r}} \ast x_{\ww{s}}; u) = \sum_{N=0}^{\infty} \widehat{\gamma_N}(x_{\ww{r}} \ast x_{\ww{s}}) u^{N(r-1)}.$$
    For the right-hand side, we have
    \begin{align*}
        \zeta_u(\ww{r}; u) \zeta_u(\ww{s}; u) &= \left( \sum_{k=0}^{\infty} \gamma_k(\ww{r}) u^{k(r-1)} \right) \left( \sum_{\ell=0}^{\infty} \gamma_\ell(\ww{s}) u^{\ell(r-1)} \right) \\
        &= \sum_{N=0}^{\infty} \left( \sum_{k=0}^N \gamma_k(\ww{r}) \gamma_{N-k}(\ww{s}) \right) u^{N(r-1)}.
    \end{align*}
    Comparing the coefficients of $u^{N(r-1)}$ on both sides yields the desired equality.
\end{proof}

Theorem \ref{thm:gamma_shuffle}, together with Proposition \ref{prop:gamma mzv}, provides an specific method to produce relations for Thakur's multiple zeta values, including values at non-positive indices.

\begin{ex}\label{ex:gamma shuffle detailed}
To see how Theorem \ref{thm:gamma_shuffle} explicitly produces relations involving multiple zeta values at non-positive indices, let us consider the $r$-shuffle of two depth-1 words $x_{r_1}$ and $x_{s_1}$ for some $r_1,s_1\in\NN$. 

By Theorem \ref{thm:gamma_shuffle}, we extract the relation for $N=1$, which gives the equation
\begin{equation}\label{eq:gamma1 base eq}
    \widehat{\gamma_1}(x_{r_1} \ast x_{s_1}) = \gamma_1(r_1)\gamma_0(s_1) + \gamma_0(r_1)\gamma_1(s_1).
\end{equation}
We now express both sides of \eqref{eq:gamma1 base eq} completely in terms of multiple zeta values using Examples \ref{ex:expansion gamma} and \ref{ex:expansion gamma depth 2}.

First, recall from Example \ref{ex:expansion gamma} that $\gamma_0(s) = \zeta_A(s)$ and $$\gamma_1(s) = -s \left( \frac{1}{L_1} \zeta_A(s) + \frac{1}{D_1} \zeta_A(s-r+1) \right).$$ 
Substituting these into the right-hand side of \eqref{eq:gamma1 base eq} yields
\begin{align}
    \notag&\left[ -r_1 \left( \frac{1}{L_1} \zeta_A(r_1) + \frac{1}{D_1} \zeta_A(r_1-r+1) \right) \right] \zeta_A(s_1) + \zeta_A(r_1) \left[ -s_1 \left( \frac{1}{L_1} \zeta_A(s_1) + \frac{1}{D_1} \zeta_A(s_1-r+1) \right) \right] \\\label{eq:RHS}
    &= -\frac{r_1+s_1}{L_1} \zeta_A(r_1)\zeta_A(s_1) - \frac{1}{D_1} \Big( r_1\zeta_A(r_1-r+1)\zeta_A(s_1) + s_1\zeta_A(r_1)\zeta_A(s_1-r+1) \Big).
\end{align}

Next, note that
\begin{equation}\label{eq:ast expansion}
    \widehat{\gamma_1}(x_{r_1} \ast x_{s_1}) = \gamma_1(r_1, s_1) + \gamma_1(s_1, r_1) + \gamma_1(r_1+s_1) + \sum_{i+j=r_1+s_1}\Delta_{r_1,s_1}^{i,j}\gamma_1(i, j).
\end{equation}
We apply the explicit formulas from Example \ref{ex:expansion gamma} and \ref{ex:expansion gamma depth 2} to each term. Then \eqref{eq:ast expansion} becomes
\begin{align}
    \notag-\frac{r_1+s_1}{L_1} &\zeta_A(r_1)\zeta_A(s_1) \\\notag
    &- \frac{1}{D_1} \Big( r_1\zeta_A(r_1-r+1,s_1) + s_1\zeta_A(r_1,s_1-r+1) \Big) \\\notag
    &- \frac{1}{D_1} \Big( s_1\zeta_A(s_1-r+1,r_1) + r_1\zeta_A(s_1,r_1-r+1) \Big) \\\notag
    &- \frac{r_1+s_1}{D_1} \zeta_A(r_1+s_1-r+1) \\\label{eq:LHS}
    &- \frac{1}{D_1} \sum_{i+j=r_1+s_1} \Delta_{r_1,s_1}^{i,j} \Big( i\zeta_A(i-r+1,j) + j\zeta_A(i,j-r+1) \Big).
\end{align}

Finally, equating the \eqref{eq:RHS} and \eqref{eq:LHS}, we obtain the identity:
\begin{align}
    \notag&-r_1\zeta_A(r_1-r+1)\zeta_A(s_1) - s_1\zeta_A(r_1)\zeta_A(s_1-r+1) \\
    \notag&= -r_1\zeta_A(r_1-r+1,s_1) - s_1\zeta_A(r_1,s_1-r+1) - s_1\zeta_A(s_1-r+1,r_1) - r_1\zeta_A(s_1,r_1-r+1) \\\label{eq:derivation}
    &\quad - (r_1+s_1)\zeta_A(r_1+s_1-r+1) + \sum_{i+j=r_1+s_1}\Delta_{r_1,s_1}^{i,j} \Big( (-i)\zeta_A(i-r+1,j) + (-j)\zeta_A(i,j-r+1) \Big).
\end{align}
For $r_1\leq r-1$ or $s_1 \leq  r-1$, \eqref{eq:derivation} yields an algebraic relation among Thakur's multiple zeta values that involves arguments at non-positive indices.
\end{ex}

\begin{ex}Let $r_1, s_1 \in \mathbb{N}$. By extracting the coefficient of $N=2$ from the shuffle relation $x_{r_1} \ast x_{s_1}$ in Theorem \ref{thm:gamma_shuffle}, one can obtain the identity (cf. \eqref{eq:derivation}):
    \begin{align}
\notag& r_1 s_1 \zeta_A(r_1-r+1)\zeta_A(s_1-r+1) + \binom{-r_1}{2}\zeta_A(r_1-2r+2)\zeta_A(s_1) + \binom{-s_1}{2}\zeta_A(r_1)\zeta_A(s_1-2r+2) \\
\notag&= r_1 s_1 \Big( \zeta_A(r_1-r+1, s_1-r+1) + \zeta_A(s_1-r+1, r_1-r+1) \Big) \\
\notag&\quad + \binom{-r_1}{2} \Big( \zeta_A(r_1-2r+2, s_1) + \zeta_A(s_1, r_1-2r+2) \Big) \\
\notag&\quad + \binom{-s_1}{2} \Big( \zeta_A(r_1, s_1-2r+2) + \zeta_A(s_1-2r+2, r_1) \Big) \\
\notag&\quad + \binom{-(r_1+s_1)}{2} \zeta_A(r_1+s_1-2r+2) \\
\label{eq:N2_final_derivation}&\quad + \sum_{i+j=r_1+s_1} \Delta_{r_1,s_1}^{i,j} \left( i j \zeta_A(i-r+1, j-r+1) + \binom{-i}{2} \zeta_A(i-2r+2, j) + \binom{-j}{2} \zeta_A(i, j-2r+2) \right).
\end{align}
\end{ex}
\subsection{Derivations on \texorpdfstring{$r$}{r}-shuffle Relations}\label{section:4.3}

Inspired by \eqref{eq:derivation} and \eqref{eq:N2_final_derivation}, we explicitly define a new family of operators $\Dcal_N$. We will prove that they form a Hasse-Schmidt derivation over $\widehat{\zeta_A}$ by constructing a new bracket compatible with our abstract framework (see Corollary \ref{cor:Z hat shuffle}).

Let us first recall the general notion of a Hasse-Schmidt derivation (see \cite{Weifeld1965}):

\begin{df}

    Let $\KK$ be a field, and let $A, B$ be $\KK$-algebras. Given a $\KK$-algebra homomorphism $f: A \to B$, a sequence of $\KK$-linear maps $(D_N)_{N \ge 0}$ from $A$ to $B$ is called a \textit{Hasse-Schmidt derivation over $f$} if

    \begin{enumerate}

        \item $D_0 = f$, and

        \item $D_N(xy) = \sum_{k=0}^N D_k(x)D_{N-k}(y)$ for all $x, y \in A$ and $N \geq 0$.

    \end{enumerate}

\end{df}

Recall that $\Zcal_\infty$ is the $K$-algebra generated by all $\zeta_A(\ww{s})$ for $\ww{s}\in\Ical$.  Then the operators $\Dcal_N$ are defined as follows:
\begin{df}\label{df:D_N}
    For $N\geq 0$, we define $\Dcal_N : \Rcal \to \Zcal_\infty$ to be the unique $\FF_p$-linear map such that
    $\Dcal_N(x_{\varnothing})=1,$
    and 
    $$\Dcal_N(x_{\ww{s}}) := \sum_{\substack{n_1, \ldots, n_m \ge 0 \\ n_1 + \dots + n_m = N}} \left( \prod_{j=1}^m \binom{-s_j}{n_j} \right) \zeta_A(s_1-n_1(r-1), \dots, s_m-n_m(r-1))$$
    for non-empty index $\ww{s}=(s_1,\ldots,s_m)\in\Ical$.
\end{df}
Here, we remark that every Thakur multiple zeta value at a non-positive index is a $K$-linear combination of Thakur multiple zeta values at positive indices, so $\Dcal_N$ are well-defined.

We now consider the formal variable $X$ and the new bracket defined by
\begin{equation*}
    [a]_{X} := a + a^r X = a(1 + a^{r-1}X) \in \CC_\infty\mbb{X}.
\end{equation*}
Then one checks the following:
\begin{enumerate}
    \item For all $a\in A_{+}$, $[a]_X\in\CC_\infty\mbb{X}^\times$.
    \item For all \(a,b\in A\),
    $$[a+b]_X=(a+b)+(a+b)^rX=(a+a^rX)+(b+b^rX)=[a]_X+[b]_X.$$
    \item For all $a\in A$ and $\varepsilon\in\FF_r$,
    $$[\varepsilon a]_X=\varepsilon a+(\varepsilon a)^r X=\varepsilon a+\varepsilon a^r X=\varepsilon [a]_X.$$
\end{enumerate}
Furthermore, let $\H_d^X(\ww{s})$ and $\H_{<d}^X(\ww{s})$ be defined as \eqref{eq:H_d def} and \eqref{eq:H_<d def} with respect to this bracket.

\begin{lem}\label{lem:v-bracket properties}
    For $d\in\NN$ and non-empty $\ww{s}=(s_1,\ldots,s_m)\in\Ical$, we have 
    \begin{align*}
         \H_{<d}^X(\ww{s})= \sum_{N=0}^{\infty}  \sum_{\substack{n_1, \ldots, n_m \ge 0 \\ n_1+\dots+n_m=N}} \left( \prod_{j=1}^m \binom{-s_j}{n_j} \right) S_{<d}(s_1-n_1(r-1),\ldots,s_m-n_m(r-1))X^N.
    \end{align*}
    Therefore, as $d\to\infty$, it converges to
    $$\zeta_X(\ww{s}) := \lim_{d \to \infty} \H_{<d}^X(\ww{s})=\sum_{N=0}^{\infty} \Dcal_N(x_{\ww{s}}) X^N.$$
\end{lem}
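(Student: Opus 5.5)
We expand each factor $[a]_X^{-s}$ as a power series in $X$, collect the terms degree by degree, and then pass to the limit $d\to\infty$ coefficient-wise, in the same way as the proof of Proposition \ref{prop:expansion of umzv}.

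\emph{Local expansion.} For $a\in A_+$ and $s\in\NN$ we have $[a]_X=a(1+a^{r-1}X)$. The generalized binomial theorem in $\CC_\infty\mbb{X}$ gives
\[
[a]_X^{-s}=a^{-s}\sum_{n=0}^\infty\binom{-s}{n}a^{n(r-1)}X^n=\sum_{n=0}^\infty\binom{-s}{n}\frac{1}{a^{s-n(r-1)}}X^n .
\]
Here $\binom{-s}{n}$ is read modulo $p$. This is legitimate because the identity $(1+Y)^{-s}=\sum_n\binom{-s}{n}Y^n$ holds in $\ZZ\mbb{Y}$ and therefore holds after reduction to characteristic $p$. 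This is the analogue of \eqref{eq:local_exp}, with $W_n^{(s)}(X)$ replaced by the single monomial $\binom{-s}{n}X^{n(r-1)}$.

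\emph{Summation at fixed $d$.} Summing over $a\in A_{+,d}$ gives
\[
\H^X_d(s)=\sum_{n\ge0}\binom{-s}{n}S_d\bigl(s-n(r-1)\bigr)X^n .
\]
Every $S_d(\cdot)$ with an integer argument is defined, including non-positive arguments. Next, $\H^X_{<d}(\ww{s})$ is a finite sum over $d>d_1>\cdots>d_m\ge0$ of products $\prod_j\H^X_{d_j}(s_j)$. Multiplying the $m$ power series and collecting the coefficient of $X^N$, then exchanging the finite sum over degrees with the sum over $(n_1,\dots,n_m)$ with $n_1+\cdots+n_m=N$, we obtain
\[
\prod_{j}\binom{-s_j}{n_j}\sum_{d>d_1>\cdots>d_m\ge0}\prod_j S_{d_j}\bigl(s_j-n_j(r-1)\bigr)=\prod_{j}\binom{-s_j}{n_j}\,S_{<d}\bigl(s_1-n_1(r-1),\dots,s_m-n_m(r-1)\bigr).
\]
This is the claimed formula. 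Since all sums are finite for fixed $N$, no convergence issue arises at this stage.

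\emph{Limit.} For fixed $N$ the coefficient of $X^N$ is a finite $\FF_p$-combination of $S_{<d}(\ww{t})$ with $\ww{t}\in\Ical^{\ext}$. By the definition of $\zeta_A$ on extended indices, each such $S_{<d}(\ww{t})$ converges to $\zeta_A(\ww{t})$; this uses the vanishing of $S_d(s)$ for $s\le0$ and $d$ large \cite[Theorem 5.1.2]{Thakur2004}. Hence $\H^X_{<d}(\ww{s})$ converges coefficient-wise, that is, in the product topology on $\CC_\infty\mbb{X}$. The limit coefficient is exactly $\Dcal_N(x_{\ww{s}})$ as given in Definition \ref{df:D_N}.

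The only point needing care is the bookkeeping in the exchange of the finite sums, together with checking that the binomial coefficients modulo $p$ are consistent with the ones appearing in $\Dcal_N$. I expect no real obstacle.
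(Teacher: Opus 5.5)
Your proposal is correct and follows essentially the same route as the paper. Both arguments expand $[a]_X^{-s}=a^{-s}(1+a^{r-1}X)^{-s}$ binomially, collect the coefficient of $X^N$ into $S_{<d}$ at the shifted (possibly non-positive) indices, and pass to the limit coefficient-wise. The only cosmetic difference is that you first group by degree (forming $\H^X_d(s)$) before multiplying, whereas the paper expands the product for each tuple $(a_1,\ldots,a_m)$ and then sums; for fixed $d$ this is the same finite rearrangement.
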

\begin{proof}
    Let $\ww{s} = (s_1, \ldots, s_m) \in \Ical$. For each $a_j \in A_+$, we expand $[a_j]_X^{-s_j}$ in $\CC_\infty\mbb{X}$:
        $$[a_j]_X^{-s_j} = a_j^{-s_j}(1 + a_j^{r-1}X)^{-s_j} = \sum_{n_j=0}^{\infty} \binom{-s_j}{n_j} a_j^{-s_j+n_j(r-1)} X^{n_j}.$$
        Thus, we obtain
        \begin{align*}
            \frac{1}{[a_1]_X^{s_1}\cdots [a_m]_X^{s_m}} &= \prod_{j=1}^m \left( \sum_{n_j=0}^{\infty} \binom{-s_j}{n_j} a_j^{-s_j+n_j(r-1)} X^{n_j} \right) \\
            &= \sum_{N=0}^{\infty} X^N \sum_{\substack{n_1, \ldots, n_m \ge 0 \\ n_1+\dots+n_m=N}} \left( \prod_{j=1}^m \binom{-s_j}{n_j} \right) \frac{1}{a_1^{s_1-n_1(r-1)}\cdots a_m^{s_m-n_m(r-1)}}.
        \end{align*}
        Substituting this expansion into the definition of the finite multiple harmonic series $\H_{<d}^X(\ww{s})$ evaluated with the $X$-bracket, we get
        \begin{align*}
            \H_{<d}^X(\ww{s}) &= \sum_{\substack{a_1,\ldots,a_m\in A_+ \\ d>\deg a_1>\cdots>\deg a_m\geq 0}} \frac{1}{[a_1]_X^{s_1}\cdots [a_m]_X^{s_m}} \\
            &= \sum_{N=0}^{\infty} X^N \sum_{\substack{n_1, \ldots, n_m \ge 0 \\ n_1+\dots+n_m=N}} \left( \prod_{j=1}^m \binom{-s_j}{n_j} \right) \sum_{\substack{a_1,\ldots,a_m\in A_+ \\ d>\deg a_1>\cdots>\deg a_m\geq 0}} \frac{1}{a_1^{s_1-n_1(r-1)}\cdots a_m^{s_m-n_m(r-1)}}\\
            &= \sum_{N=0}^{\infty}  \sum_{\substack{n_1, \ldots, n_m \ge 0 \\ n_1+\dots+n_m=N}} \left( \prod_{j=1}^m \binom{-s_j}{n_j} \right) S_{<d}(s_1-n_1(r-1),\ldots,s_m-n_m(r-1))X^N.
        \end{align*} 
        Thus, $\H_{<d}^X(\ww{s})$ converges coefficient-wise to
        $$\zeta_X(\ww{s}) = \sum_{N=0}^{\infty} \Dcal_N(x_{\ww{s}}) X^N$$
        in $\CC_\infty\mbb{X}$ as $d\to\infty$.
\end{proof}

Therefore, we can apply Corollary \ref{cor:Z hat shuffle} to the formal series $$\zeta_X(\ww{s}) := \lim_{d \to \infty} \H_{<d}^X(\ww{s})$$
and obtain the following theorem:

\begin{thm}\label{thm:D_N_derivation_rigorous}
    The sequence of operators $(\Dcal_N)_{N\geq 0}$ forms a Hasse-Schmidt derivation over the realization map $\widehat{\zeta_A}:\Rcal\to\Zcal_\infty$. That is, for any indices $\ww{r}, \ww{s} \in \Ical$ and any integer $N \ge 0$, we have
    $\Dcal_0(x_{\ww{s}})=\zeta_A(\ww{s})$ and
    $$\Dcal_N(x_{\ww{r}} \ast x_{\ww{s}}) = \sum_{k=0}^N \Dcal_k(x_{\ww{r}}) \Dcal_{N-k}(x_{\ww{s}}).$$
\end{thm}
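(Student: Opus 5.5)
The plan is to realize $(\Dcal_N)_{N\ge 0}$ as the coefficient sequence of a single $\FF_p$-algebra homomorphism $\Rcal\to\CC_\infty\mbb{X}$, and then compare coefficients of $X^N$. We apply the abstract framework of \S\ref{section:3.2} with $\KK=\CC_\infty\mbb{X}$ and the $X$-bracket $[a]_X=a+a^rX$.

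First we check the hypotheses of Corollary \ref{cor:Z hat shuffle}. The ring $\CC_\infty\mbb{X}$ is an integral domain containing $\FF_r$. It is Hausdorff under the product (coefficient-wise) topology, where each coefficient carries the $\infty$-adic topology. Properties (1)--(3) of the bracket were verified just before Lemma \ref{lem:v-bracket properties}. For (1), note that $[a]_X=a(1+a^{r-1}X)$ with $a\neq 0$, so it is a unit.

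Property (4) is the convergence of $\H^X_{<d}(\ww{s})$ as $d\to\infty$, and this is exactly Lemma \ref{lem:v-bracket properties}. For fixed $N$, the coefficient of $X^N$ is a finite $\FF_p$-linear combination of the sums $S_{<d}(s_1-n_1(r-1),\dots,s_m-n_m(r-1))$. Each of these converges to the corresponding Thakur value $\zeta_A$, including at non-positive indices, by the definition of $\zeta_A$ on $\Ical^{\ext}$. Hence $\mathbf{Z}(\ww{s})=\zeta_X(\ww{s})=\sum_N\Dcal_N(x_{\ww{s}})X^N$.

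Corollary \ref{cor:Z hat shuffle} then says that $\widehat{\zeta_X}:\Rcal\to\CC_\infty\mbb{X}$ is an $\FF_p$-algebra homomorphism. Since $\Dcal_N$ is $\FF_p$-linear and $\Dcal_N(x_\varnothing)=1$, the map $\widehat{\zeta_X}$ satisfies
\[
\widehat{\zeta_X}(w)=\sum_{N\ge0}\Dcal_N(w)X^N \quad\text{for every } w\in\Rcal.
\]
Two small points need care here. The first is the empty word: $\widehat{\zeta_X}(1)=1$, whereas $\sum_N\Dcal_N(x_\varnothing)X^N$ would give $\sum_N X^N$ under the stated convention. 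So we read $\Dcal_N(x_\varnothing)$ as $\delta_{N,0}$, which matches $\Dcal_0(x_\varnothing)=1$. The second is that the extension to $\Rcal$ by linearity is harmless, because the $\ast$-product of two words is a finite $\FF_p$-combination of words.

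Taking $w=x_{\ww{r}}\ast x_{\ww{s}}$ and comparing the coefficients of $X^N$ in
\[
\widehat{\zeta_X}(x_{\ww{r}}\ast x_{\ww{s}})=\zeta_X(\ww{r})\,\zeta_X(\ww{s})
\]
gives
\[
\Dcal_N(x_{\ww{r}}\ast x_{\ww{s}})=\sum_{k=0}^N\Dcal_k(x_{\ww{r}})\,\Dcal_{N-k}(x_{\ww{s}}).
\]
Setting $N=0$ forces $n_j=0$ for all $j$, so the binomial factors are $1$ and $\Dcal_0(x_{\ww{s}})=\zeta_A(\ww{s})$, i.e.\ $\Dcal_0=\widehat{\zeta_A}$. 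The values lie in $\Zcal_\infty$ by the remark after Definition \ref{df:D_N}.

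The only potentially delicate step is justifying the topology. Convergence in $\CC_\infty\mbb{X}$ is coefficient-wise, and Corollary \ref{cor:Z hat shuffle} needs multiplication to be continuous so that the identity of Theorem \ref{thm:abstract-r-shuffle} survives the limit $d\to\infty$. This holds because each coefficient of a product depends on only finitely many coefficients of the factors. I would state this explicitly, or bypass it by applying Theorem \ref{thm:abstract-r-shuffle} at finite $d$, extracting the $X^N$ coefficient there, and letting $d\to\infty$ in that finite identity.
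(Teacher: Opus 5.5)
Your proposal is correct and follows the paper's proof: you apply Corollary \ref{cor:Z hat shuffle} to the bracket $[a]_X=a+a^rX$ in $\CC_\infty\mbb{X}$, use Lemma \ref{lem:v-bracket properties} to identify $\widehat{\zeta_X}(x_{\ww{s}})=\sum_N\Dcal_N(x_{\ww{s}})X^N$, and compare coefficients of $X^N$. Your reading $\Dcal_N(x_\varnothing)=\delta_{N,0}$, instead of the literal $\Dcal_N(x_\varnothing)=1$ for all $N$ in Definition \ref{df:D_N}, is a necessary correction, since with the literal convention the Leibniz rule already fails for $\ww{r}=\varnothing$, $\ww{s}=(1)$, $N=1$.
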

\begin{proof}
    It is clear from the definitions that $\Dcal_0=\widehat{\zeta_A}$. It suffices to check that $(\Dcal_N)_{N\geq 0}$ satisfies the higher Lebiniz rule.
    By Lemma \ref{lem:v-bracket properties} and Corollary \ref{cor:Z hat shuffle}, the map $\widehat{\zeta_X} : \Rcal \to \CC_\infty\mbb{X}$ sending $x_{\ww{s}} \mapsto \zeta_X(\ww{s})$ is an $\FF_p$-algebra homomorphism. Therefore, we have 
    $$\widehat{\zeta_X}(x_{\ww{r}} \ast x_{\ww{s}}) = \zeta_X(\ww{r}) \zeta_X(\ww{s}).$$
    Comparing the expansions of both sides, we obtain
    $$\sum_{N=0}^{\infty} \Dcal_N(x_{\ww{r}} \ast x_{\ww{s}}) X^N = \left( \sum_{k=0}^{\infty} \Dcal_k(x_{\ww{r}}) X^k \right) \left( \sum_{\ell=0}^{\infty} \Dcal_\ell(x_{\ww{s}}) X^\ell \right)=\sum_{N=0}^\infty \left(\sum_{k=0}^N \Dcal_k(x_{\ww{r}}) \Dcal_{N-k}(x_{\ww{s}})\right)X^N,$$
    which completes the proof.
\end{proof}

\begin{rmk}
    Equations \eqref{eq:derivation} and \eqref{eq:N2_final_derivation} serve as special cases of Theorem \ref{thm:D_N_derivation_rigorous} for $\dep(\ww{r})=\dep(\ww{s})=1$ and $N=1,2$. This theorem shows that by applying the derivations $\Dcal_N$ to the $r$-shuffle relations for positive indices, we can generate relations involving Thakur's multiple zeta values at both positive and non-positive indices .
\end{rmk}

We end this paper with the following remark.
\begin{rmk}
    While the present work focuses on the Carlitz module to construct the $u$-bracket, finite multiple harmonic $u$-series, and $u$-multiple zeta values, a natural and intriguing generalization is to consider \textit{Drinfeld $A$-modules}. We remark, however, that handling the convergence issues in this broader setting is expected to be considerably more difficult.
\end{rmk}
\sloppy

\bibliographystyle{amsplain}

\end{document}